\newtheorem{theorem}{Theorem}
\newtheorem{definition}{Definition}
\newtheorem{lemma}{Lemma}
\newproof{pf}{Proof}
\newtheorem{remark}{Remark}
\newtheorem{corollary}{Corollary}
\newtheorem{example}{Example}
\journal{LAA}
\begin{document}

\begin{frontmatter}



\title{On a new kind of Ansatz Spaces for Matrix Polynomials}


\author[label1]{Heike Fa\ss bender}
\address[label1]{Institut \emph{Computational Mathematics}/ AG Numerik, TU Braunschweig, Pockelsstr. 14, 38106 Braunschweig, Germany}
\cortext[cor1]{Corresponding author, Email philip.saltenberger@tu-braunschweig.de}
\author[label1]{Philip Saltenberger\corref{cor1}}

\begin{abstract}
In this paper, we introduce a new family of equations for matrix pencils that may be utilized for the construction
of strong
linearizations for any square or rectangular matrix polynomial. We provide a comprehensive characterization of the
resulting
vector spaces and show that almost every matrix pencil therein
is a strong linearization regardless whether the matrix polynomial under consideration is regular
or singular. These novel ``ansatz spaces'' cover all block Kronecker pencils as introduced in
\cite{DopLPVD16} as a subset and therefore contain
all Fiedler pencils modulo permutations. The important case of square matrix polynomials is examined
in greater depth. We prove that the intersection of any number of block Kronecker ansatz spaces is never
empty and construct large subspaces of block-symmetric matrix pencils among which still almost every pencil is a
strong
linearization. Moreover, we show that the original ansatz spaces $\mathbb{L}_1$ and $\mathbb{L}_2$ may essentially
be recovered
from block Kronecker ansatz spaces via pre- and postmultiplication, respectively, of certain constant matrices.
\end{abstract}

\begin{keyword}
matrix polynomials \sep linearization \sep strong linearization \sep Fiedler pencils \sep block Kronecker pencils
\sep rectangular
matrix polynomial \sep structure-preserving linearization \sep eigenvector recovery \sep ansatz space

\MSC[2010] 65F15 \sep 15A03 \sep 15A18 \sep 15A22 \sep 15A23 \sep 47J10
\end{keyword}

\end{frontmatter}

\section{Introduction}
The linearization of (non) square matrix polynomials
$$P(\lambda) = \sum_{i=0}^d P_i\lambda^i, P_i \in \mathbb{R}^{m \times n}$$
has received much attention in the last ten years,
motivated at least in part by the ground-breaking paper \cite{MacMMM06}. In that paper, three vector spaces
$\mathbb{L}_1$,
$\mathbb{L}_2$ and $\mathbb{DL}$ of potential linearizations (called \enquote{ansatz spaces}) for square matrix
polynomials
$P(\lambda) (m = n)$ have been introduced. The spaces  $\mathbb{L}_1$, $\mathbb{L}_2$ generalize the companion
form of the first and second kind, resp.,
$$
\mathbb{L}_1(P) = \{\mathcal{L}(\lambda) = \lambda X + Y \in \mathbb{R}[\lambda]^{nd \times nd} \mid
 \mathcal{L}(\lambda) \big( \Lambda_{d-1} \otimes I_n \big) = v \otimes P(\lambda) , v \in \mathbb{R}^d\},
$$
$\mathbb{L}_2(P) = \lbrace \mathcal{L}(\lambda)^T
\; | \; \mathcal{L}(\lambda) \in \mathbb{L}_1(P^T) \rbrace$
while the double ansatz space
\begin{equation}\label{DLP}
\mathbb{DL}(P) = \mathbb{L}_1(P) \cap \mathbb{L}_2(P)
\end{equation}
 is their intersection.
Here $\Lambda_j$ is the vector of the elements of the standard basis;
$\Lambda_j := \Lambda_j(\lambda) = [ \;
\lambda^{j} \; \lambda^{j - 1} \; \cdots \; \lambda \; 1 \; ]^T \in \mathbb{R}[\lambda]^{j + 1}$ for any integer $j
\geqslant 0.$ A  thorough discussion of these spaces can be found in \cite{MacMMM06} and \cite{HigMMT06}, see
\cite{DopLPVD16}
for
more references. In particular,
it is discussed in \cite{MacMMM06} that almost all pencils in these spaces are linearizations of $P(\lambda)$ and
in
\cite{HigMMT06} that any matrix pencil in $\mathbb{DL}(P)$ is block-symmetric.

The second main source of linearizations are Fiedler pencils $F_{\sigma}(\lambda)$. Unlike the linearizations from
the vector
spaces discussed above,
these can be defined not only for square, but also for rectangular matrices \cite{DeTDM12}. These pencils are defined in an
implicit way, either
in terms of products of matrices for square polynomials or as the output of a symbolic algorithm for rectangular
matrices,
 see \cite[Section 4]{DopLPVD16} for a definition, a summary of their properties and references to further work.

In \cite[Section 5]{DopLPVD16} the family of block Kronecker pencils is introduced, which include all of the
Fiedler pencils
(modulo permutations). For an arbitrary matrix pencil $M_0+\lambda M_1 \in \mathbb{R}^{(\eta+1)m \times
(\epsilon+1)n}$ any matrix
pencil of the form
\begin{equation}\label{blockKronpencil}
{\mathcal N}(\lambda) = \left[\begin{array}{c|c}
M_0+\lambda M_1 & L_\eta^T \otimes I_m\\ \hline
L_\epsilon \otimes I_n & 0_{\epsilon n \times\eta m}
\end{array}\right] \in \mathbb{R}^{((\eta + 1)m + \epsilon n) \times ((\epsilon + 1)n + \eta m)}
\end{equation}
is called an $(\epsilon,n,\eta,m)$-block Kronecker pencil, or simply, a block Kronecker pencil. Here,
 \begin{equation}
  L_{\kappa} = L_{\kappa}(\lambda) := \begin{bmatrix} -1 & \lambda & & & \\ & -1 & \lambda & & \\ & & \ddots &
\ddots & \\ & & &
-1 & \lambda
\end{bmatrix} \in \mathbb{R}[\lambda]^{\kappa \times (\kappa + 1)}. \label{Lkappa}
 \end{equation}
It is proven that ${\mathcal N}(\lambda)$  is a (strong) linearization of the matrix polynomial
$Q(\lambda) = (\Lambda_\eta(\lambda)^T\otimes I_m)(M_0+\lambda M_1)(\Lambda_\epsilon(\lambda)\otimes I_n)\in
\mathbb{R}[\lambda]^{m\times n}$ of degree $d \leq \epsilon+\eta+1.$

Inspired by the work in \cite{DopLPVD16}, we introduce a new family of equations for matrix pencils that may be
applied to square and rectangular matrix polynomials. Matrix pencils that satisfy one or more particular equations
form real
vector spaces that are shown to serve as an abundant source of strong linearization. Since these spaces share
important
properties with $\mathbb{L}_1$ and $\mathbb{L}_2$ and entirely contain all block Kronecker pencils as introduced in
\cite{DopLPVD16}, we named them ``block Kronecker ansatz spaces''. Our derivations based on these ansatz spaces are basically theoretically oriented. The purpose of this paper is twofold:
it builds a bridge between the two main linearization techniques - the ansatz space framework initiated in \cite{MacMMM06} and the approach via Fiedler pencils starting with \cite{AntV04} - along with the development of ansatz spaces in the style of \cite{MacMMM06} for rectangular matrix polynomials.

Although we define and introduce the block Kronecker ansatz spaces for rectangular matrix polynomials, we devote special attention to the investigation of the square case. In this context we are able to show that the intersection of any number of
block Kronecker ansatz spaces is never empty. As a main difference to $\mathbb{DL}$, pencils in two or more
block Kronecker
ansatz spaces are not block-symmetric in general but block-symmetric pencils form proper and large-dimensional
subspaces therein.
Still almost every matrix pencil, block-symmetric or not, is a strong linearization as long as the matrix polynomial under consideration is regular.
The main contribution of this paper is to provide a comprehensive introduction of block Kronecker
ansatz spaces, to prove their basic properties and to motivate these features by appropriately selected examples.
To this end, in order to focus on the essential ideas and concepts, we presents our results just for the real numbers $\mathbb{R}$. This enables us to concentrate on the precise introduction of the block Kronecker spaces (over $\mathbb{R}$) avoiding technicalities that might occur considering other fields.

After submission of the first version of this paper, the manuscript \cite{DoBPSZ16}
was released. In \cite{DoBPSZ16} the block Kronecker ansatz spaces have been introduced
independently as the family of extended block Kronecker pencils
motivated, as in our case, by the results in \cite{DopLPVD16}. However, the goal of
\cite{DoBPSZ16} is different than ours. While our goal is to establish a new ansatz space framework for the explicit construction of strong linearizations for matrix polynomials and to show the connections between those ansatz spaces, Fiedler pencils and block Kronecker pencils, the goal in \cite{DoBPSZ16} is to provide a unified approach to all the families of Fiedler-like pencils in any field via the more general concept of strong block minimal bases pencils. Being now aware of \cite{DoBPSZ16} we will reference to similar results throughout the paper and, moreover, point out some new insights taking the results from \cite{DoBPSZ16} into account.

The paper is organized as follows: in Section \ref{sec2} some basic notation and well-known results are reviewed. Section \ref{sec3} introduces the
block Kronecker ansatz space and its most important properties. Double ansatz spaces and their subspaces of
block-symmetric
pencils are considered in Section \ref{sec4},
while Section \ref{sec:L1L2} presents some further understanding of $\mathbb{L}_1$ and $\mathbb{L}_2$ based on our results. Some concluding remarks are given in Section
\ref{sec:conclusions}.

\section{Basic Notation}\label{sec2}
The following notation will be used throughout the paper: $I_n$ is the $n \times n$ identity matrix, $e_i$ its
$i$-th column and
$0_{m \times n}$ denotes
the $m \times n$ zero matrix. The Kronecker product of two matrices $A$ and $B$ is denoted $ A \otimes B$ whereas
the direct
product of $A$ and $B$ is $A \oplus B$, i.e. $A \oplus B = \textnormal{diag}(A,B)$. Whenever a $km \times kn$
matrix $A$
may be expressed as $A= \sum_{i,j=1}^k e_ie_j^T \otimes B_{ij}$ for certain $m \times n$ matrices $B_{ij}$, we call
$A^{\mathcal{B}} = \sum_{i,j=1}^k e_je_i^T \otimes B_{ij}$ the block-transpose of $A$ (see \cite[Def.
2.1]{HigMMT06}).
For $\mathbb{R}[\lambda]$, the ring of real polynomials in the variable $\lambda$, the $m \times n$ matrix
ring over $\mathbb{R}[\lambda]$ is denoted by $\mathbb{R}[\lambda]^{m \times n}$. Its elements are referred to as
matrix
polynomials. Notice that $\mathbb{R}[\lambda]^{m \times n}$ is a vector space over $\mathbb{R}$.

Certainly, a matrix polynomial
$P(\lambda) \in \mathbb{R}[\lambda]^{m \times n}$ may always be expressed as
\begin{align} P(\lambda) &= P_d \lambda^d + P_{d-1} \lambda^{d-1} + \cdots + P_1 \lambda + P_0 \notag \\  &= [ \, P_d \; \,P_{d-1} \; \, \cdots \; \, P_0 \,](\Lambda_d(\lambda) \otimes I_n) \label{def_matrixpol}
\end{align}
for appropriate matrices $P_0, \ldots , P_d \in \mathbb{R}^{m \times n}$ and some $d \in \mathbb{N}$.

A matrix polynomial
$P(\lambda) \in \mathbb{R}[\lambda]^{m \times n}$ is called regular given the case $m = n$ and
$\textnormal{det}(P(\lambda))$ is
not identically zero. Otherwise, $P(\lambda)$ is called singular. A regular matrix polynomial $P(\lambda)$ is said
to be
unimodular if $\textnormal{det}(P(\lambda)) \in \mathbb{R}$. A scalar $z \in \mathbb{C}$ is referred to as a
(finite) eigenvalue
of $P(\lambda) \in \mathbb{R}[\lambda]^{m \times n}$, if $P(z)   \in \mathbb{C}^{m
\times n}$ is
singular. Its corresponding eigenspace is defined to be $\textnormal{null}(P(z))$, the nullspace of $P(z)$. \\

Two matrix polynomials $P(\lambda)$ and $Q(\lambda)$ are said to be unimodularly equivalent if there exist unimodular
matrices
$U(\lambda)$ and $V(\lambda)$ such that $P(\lambda) = U(\lambda) Q(\lambda)V(\lambda)$ holds. The equivalence is
called strict
whenever $U(\lambda)$ and $V(\lambda)$ may be chosen independent of $\lambda$. Given in the form
(\ref{def_matrixpol}), the matrix
polynomial $P(\lambda)$ has degree $k$, i.e. $\textnormal{deg}(P) = k$, whenever $P_k \neq 0$ and $P_i =
0$ for all $i >
k$. If $\textnormal{deg}(P)=1$ we refer to $P(\lambda)$ as a (matrix) pencil.
The subspace of all $m \times n$ matrix polynomials having at most degree $d \in \mathbb{N}$ is denoted
$\mathbb{R}_d[\lambda]^{m
\times n}$.
For any $P(\lambda) \in \mathbb{R}[\lambda]^{m \times n}$ and any $t \geq \textnormal{deg}(P)$, $t \in \mathbb{N}$, the $t$-reversal of
$P(\lambda)$ is defined
as the matrix polynomial
\begin{equation}
\textnormal{rev}_t(P(\lambda)) = \lambda^t P \left( \frac{1}{\lambda} \right) \in \mathbb{R}[\lambda]^{m \times n}.
\end{equation}
The matrix polynomial $P(\lambda)$ with $\textnormal{deg}(P) = k$ is said to have an infinite eigenvalue, if zero is an eigenvalue of
$\textnormal{rev}_k(P(\lambda))$. The corresponding eigenspace is $\textnormal{null}(\textnormal{rev}_k(P(0)))$.

\subsection{Linearizations of Matrix Polynomials}
A matrix pencil $\mathcal{L}(\lambda)$ is said to be a linearization of $P(\lambda) \in \mathbb{R}[\lambda]^{m
\times n}$ if
there exist two unimodular matrix polynomials $U(\lambda)$ and $V(\lambda)$ such that
\begin{equation} U(\lambda) \mathcal{L}(\lambda) V(\lambda) = \left[ \begin{array}{c|c} I_s & \\ \hline &
P(\lambda) \end{array}
\right] \label{def_linearization} \end{equation} holds for some $s \in \mathbb{N}_0$. Moreover, assuming
$\textnormal{deg}(P)=k$, the linearization
$\mathcal{L}(\lambda)$ is called strong whenever $\textnormal{rev}_1( \mathcal{L}(\lambda))$ is a linearization for
$\textnormal{rev}_k(P(\lambda))$ as well. It is a basic fact on strong linearizations that they preserve the finite
and infinite
elementary divisors of $P(\lambda)$ (see the information and the references given in \cite[Section 2]{DopLPVD16}
for more details). In particular, 
any strong linearization $\mathcal{L}(\lambda)$ of
$P(\lambda)$ has the same (finite and infinite) eigenvalues as $P(\lambda)$ and keeps on their algebraic and
geometric
multiplicities.

Given an $n
\times n$ matrix polynomial $P(\lambda) = \sum_{i=0}^k P_i \lambda^i$ of degree $\textnormal{deg}(P)=k$,
it is well
known, that the Frobenius companion form
$$ \textnormal{Frob}_P(\lambda) = \begin{bmatrix} P_k & & & \\ & I_n & & \\ & & \ddots & \\ & & & I_n \end{bmatrix}
\lambda +
\begin{bmatrix} P_{k-1} & \cdots & P_1 & P_0 \\ -I_n & & & \\ & \ddots & & \\ & & -I_n & \end{bmatrix} \in
\mathbb{R}_1[\lambda]^{kn \times kn}$$
is a strong linearization for $P(\lambda)$ no matter whether $P(\lambda)$ is regular or singular. Moreover,
(strict) equivalence preserves (strong) linearizations.
According to (\ref{def_linearization}) any matrix pencil is its own linearization. Thus, the notion of
linearization
does hardly make sense for matrix pencils. Since the construction of linearizations is our main concern throughout
the paper, we
will henceforth assume arbitrary matrix polynomials $P(\lambda)$ having degree $\textnormal{deg}(P) \geq
2$ to avoid the
potential occurrence of pathological cases.

\section{Block Kronecker Ansatz Spaces}\label{sec3}

The following definition introduces the main object of interest throughout the remaining paper. We will
consistently assume
$\epsilon$ and $\eta$ to be nonnegative integers.

  \begin{definition}[Block Kronecker Ansatz Equation] \label{def_GAS} \ \\
Let $P(\lambda)$ be an $m \times n$ matrix polynomial of degree $k= \epsilon + \eta + 1$.
We define $\mathbb{G}_{\eta + 1}(P)$ to be the set of all
$((\eta  + \hspace{0.01cm} 1)m  +  \epsilon n)  \times  ((\epsilon + 1)n + \eta m)$ matrix polynomials $\mathcal{L}(\lambda) = X \lambda + Y$ satisfying
\begin{equation} \big( (\Lambda_{\eta}(\lambda)^T \otimes I_m) \oplus I_{\epsilon n} \big) \mathcal{L}(\lambda)
\big ((\Lambda_{\epsilon}(\lambda) \otimes I_n) \oplus I_{\eta m} \big) = \alpha P(\lambda) \oplus 0_{\epsilon n \times \eta
m} \label{ansatzequation1} \end{equation}
for some $\alpha \in \mathbb{R}$. Equation (\ref{ansatzequation1}) is called block Kronecker ansatz equation for
the matrix polynomial $P(\lambda)$.
   \end{definition}
We will refer to $\mathbb{G}_{\eta + 1}(P)$ as a \enquote{block Kronecker ansatz space}
for $P(\lambda)$. This name was chosen in compliment of 
the \enquote{ansatz spaces} established in \cite{MacMMM06} and the \enquote{block Kronecker pencils} introduced in
\cite{DopLPVD16}.
How the main ideas of both papers may
be unified via the concept of block Kronecker ansatz spaces is one primary concern of this paper.

\begin{remark}\label{rem1}
According to \cite[Def. 3.1, Thm. 3.3]{DopLPVD16} it is immediate that (\ref{ansatzequation1}) may be
formulated in the framework of dual minimal bases as well. Therefore, for any other pair of dual minimal bases
\cite[Def. 2.5]{DopLPVD16} a corresponding ansatz equation may be formulated and analyzed similar to our discussion
in the subsequent sections. However, most of the following results require that we know exactly how the dual
minimal bases look like. To this end, we confine ourselves to (\ref{ansatzequation1}).
\end{remark}

Notice that, since $\eta$ may take any integer value between $0$
and $k-1$, there always exist exactly $k$ block Kronecker ansatz spaces for $P(\lambda)$.
\begin{lemma}[$\mathbb{G}_{\eta + 1}(P)$ is a $\mathbb{R}$-vector space] \label{lem_vectorspace}
For any $m \times n$ matrix polynomial $P(\lambda)$ of degree $k=\epsilon + \eta + 1$, $\mathbb{G}_{\eta + 1}(P)$
is a vector
space over $\mathbb{R}$.
\end{lemma}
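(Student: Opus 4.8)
The plan is to show that $\mathbb{G}_{\eta+1}(P)$ is closed under addition and scalar multiplication, and contains the zero pencil, so that it is a linear subspace of the $\mathbb{R}$-vector space of matrix pencils of the appropriate size. The key observation is that the left-hand side of the block Kronecker ansatz equation (\ref{ansatzequation1}) is a fixed linear operation applied to $\mathcal{L}(\lambda)$: given the fixed matrix polynomials
\[
A(\lambda) := (\Lambda_{\eta}(\lambda)^T \otimes I_m) \oplus I_{\epsilon n}, \qquad B(\lambda) := (\Lambda_{\epsilon}(\lambda) \otimes I_n) \oplus I_{\eta m},
\]
the map $\mathcal{L}(\lambda) \mapsto A(\lambda)\,\mathcal{L}(\lambda)\,B(\lambda)$ is $\mathbb{R}$-linear in $\mathcal{L}(\lambda)$. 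Likewise, the right-hand side $\alpha P(\lambda) \oplus 0_{\epsilon n \times \eta m}$ ranges over the one-dimensional subspace $\mathbb{R}\cdot\big(P(\lambda)\oplus 0_{\epsilon n\times\eta m}\big)$ as $\alpha$ runs over $\mathbb{R}$.

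First I would fix notation: let $\mathcal{L}_1(\lambda) = X_1\lambda + Y_1$ and $\mathcal{L}_2(\lambda) = X_2\lambda + Y_2$ be two elements of $\mathbb{G}_{\eta+1}(P)$, with associated scalars $\alpha_1, \alpha_2 \in \mathbb{R}$ satisfying $A(\lambda)\mathcal{L}_i(\lambda)B(\lambda) = \alpha_i\,(P(\lambda)\oplus 0)$ for $i=1,2$. Then for any $c_1, c_2 \in \mathbb{R}$ the pencil $c_1\mathcal{L}_1(\lambda) + c_2\mathcal{L}_2(\lambda) = (c_1X_1 + c_2X_2)\lambda + (c_1Y_1 + c_2Y_2)$ is again a matrix pencil of the correct size $((\eta+1)m+\epsilon n)\times((\epsilon+1)n+\eta m)$, and by linearity of matrix multiplication on both sides,
\[
A(\lambda)\big(c_1\mathcal{L}_1(\lambda) + c_2\mathcal{L}_2(\lambda)\big)B(\lambda) = (c_1\alpha_1 + c_2\alpha_2)\,\big(P(\lambda)\oplus 0_{\epsilon n\times\eta m}\big),
\]
so the linear combination satisfies (\ref{ansatzequation1}) with $\alpha = c_1\alpha_1 + c_2\alpha_2 \in \mathbb{R}$. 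Hence $c_1\mathcal{L}_1(\lambda) + c_2\mathcal{L}_2(\lambda) \in \mathbb{G}_{\eta+1}(P)$. Finally, the zero pencil satisfies (\ref{ansatzequation1}) with $\alpha = 0$, so $\mathbb{G}_{\eta+1}(P) \neq \emptyset$; combined with closure under linear combinations this establishes that $\mathbb{G}_{\eta+1}(P)$ is a subspace, hence an $\mathbb{R}$-vector space.

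There is no real obstacle here; this is a routine verification. The only point that deserves a sentence of care is that the defining property of $\mathbb{G}_{\eta+1}(P)$ involves an \emph{existential} quantifier over $\alpha$, so one must be slightly careful: membership is not "satisfies the equation for a fixed $\alpha$" but "satisfies it for some $\alpha$"; the argument above handles this correctly because the set of admissible right-hand sides $\{\alpha(P(\lambda)\oplus 0) : \alpha\in\mathbb{R}\}$ is itself closed under $\mathbb{R}$-linear combinations. One could phrase the whole proof even more compactly by noting that $\mathbb{G}_{\eta+1}(P)$ is the preimage of the line $\mathbb{R}\cdot(P(\lambda)\oplus 0_{\epsilon n\times\eta m})$ under the $\mathbb{R}$-linear map $\mathcal{L}(\lambda)\mapsto A(\lambda)\mathcal{L}(\lambda)B(\lambda)$ defined on the space of pencils of the given size, and the preimage of a subspace under a linear map is a subspace.
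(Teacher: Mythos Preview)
Your proof is correct; it is exactly the routine linearity check one would expect. The paper in fact omits the proof entirely, remarking only that the statement is ``quite obvious,'' so your argument is precisely the natural verification that the authors left to the reader.
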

Since the statement of Lemma \ref{lem_vectorspace} is quite obvious, we omit the proof.
Rather notice that equation (\ref{ansatzequation1}) may be reformulated as\footnote{In order to save space here and in subsequent formulas the dependence of $L_{\kappa}(\lambda)$ and $\Lambda_{\kappa}(\lambda)$ on $\lambda$ is sometimes omitted. Since there is no risk of confusion, $L_{\kappa}$ and $\Lambda_{\kappa}$ will always be understood as $L_{\kappa}(\lambda)$ and $\Lambda_{\kappa}(\lambda)$.}
\begin{equation} {\small \left[ \begin{array}{c|c} \Lambda_{\eta}^T \otimes I_m & 0 \\ \hline 0 & I_{\epsilon n}
\end{array}
\right]
 \left[ \begin{array}{c|c} \mathcal{L}_{11}(\lambda) & \mathcal{L}_{12}(\lambda) \\ \hline
\mathcal{L}_{21}(\lambda) &
\mathcal{L}_{22}(\lambda) \end{array} \right]
\left[ \begin{array}{c|c} \Lambda_{\epsilon} \otimes I_n & 0 \\ \hline 0 & I_{\eta m} \end{array} \right]
= \left[ \begin{array}{c|c} \alpha P(\lambda) & 0 \\ \hline 0 & 0_{\epsilon n \times \eta m} \end{array} \right] }
\label{ansatzequation2} \end{equation}
where we have expressed $\mathcal{L}(\lambda)$ as a $2 \times 2$ block matrix with the 
leading
$(\eta + 1)m \times (\epsilon + 1)n$ block $\mathcal{L}_{11}(\lambda)$. Following \cite[Def. 5.1]{DopLPVD16},
this structured $2 \times 2$
block-notation of $\mathcal{L}(\lambda) \in \mathbb{G}_{\eta + 1}(P)$ is called its natural partition. In terms of this
expression,
(\ref{ansatzequation2}) explicitly reads
\begin{equation}
 \left[ \begin{array}{c|c} (\Lambda_{\eta}^T \otimes I_m) \mathcal{L}_{11}(\lambda) (\Lambda_{\epsilon} \otimes
I_n) &
  ( \Lambda_{\eta}^T \otimes I_m) \mathcal{L}_{12}(\lambda) \\ \hline
  \mathcal{L}_{21}(\lambda) ( \Lambda_{\epsilon} \otimes I_n) & \mathcal{L}_{22}(\lambda) \end{array} \right] =
\left[
\begin{array}{c|c} \alpha P(\lambda) & 0 \\ \hline 0 & 0_{\epsilon n \times \eta m} \end{array} \right].
\label{ansatzequation3}
\end{equation}
For $(\Lambda_{\eta}^T \otimes I_m) \mathcal{L}_{11}(\lambda) (\Lambda_{\epsilon} \otimes I_n)$ we will steadily be
using the
short hand notation $\Phi( \mathcal{L}_{11}(\lambda))$ assuming the parameters involved in this
expression are clear from the context. For instance, (\ref{ansatzequation3}) implies
$\Phi(\mathcal{L}_{11}(\lambda)) =
\alpha P(\lambda)$.

Next we will consider the off-diagonal blocks of (\ref{ansatzequation3}).
Recall the definition of $L_{\kappa}(\lambda)$ (see (\ref{Lkappa})) and notice that $L_{\kappa}(\lambda) \Lambda_{\kappa}(\lambda) = 0$ (in fact
$L_{\kappa}(\lambda)$ and
$\Lambda_{\kappa}(\lambda)^T$ are dual
minimal bases, see \cite[Sec. 2]{DopLPVD16} for more information). Consequently
$(L_{\kappa}(\lambda) \otimes I_n)(\Lambda_{\kappa}(\lambda) \otimes I_n) = 0$ (see also \cite[Ex. 2.6]{DopLPVD16}).

\begin{lemma} \label{lem_nullspace}
Let $\mathcal{K}(\lambda)$ be an $\kappa_1 m \times (\kappa_2 + 1) n$ matrix pencil and assume
\begin{equation} \mathcal{K}(\lambda) \big(
\Lambda_{\kappa_2}(\lambda) \otimes I_n \big) = 0_{\kappa_1 m \times n}. \label{nullspace_equation} \end{equation}
Then
$\mathcal{K}(\lambda) = C(L_{\kappa_2}(\lambda) \otimes I_n)$ for some matrix  $C
\in \mathbb{R}^{\kappa_1 m \times \kappa_2 n}$.
\end{lemma}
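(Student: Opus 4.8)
The plan is to exploit the well-known dual-minimal-bases relationship between $\Lambda_{\kappa_2}(\lambda)^T$ and $L_{\kappa_2}(\lambda)$: the rows of $L_{\kappa_2}(\lambda)$ span exactly the left kernel of the column vector $\Lambda_{\kappa_2}(\lambda)$ over $\mathbb{R}(\lambda)$, so any polynomial row vector annihilated by $\Lambda_{\kappa_2}(\lambda)$ from the right must be a polynomial (indeed constant-coefficient) combination of the rows of $L_{\kappa_2}(\lambda)$. Tensoring with $I_n$ propagates this to the block setting. To turn this into an explicit argument, I would first reduce to the scalar case $n=1$, $\kappa_1 m = 1$, i.e. to showing that a single row pencil $k(\lambda) = [k_0(\lambda)\ \cdots\ k_{\kappa_2}(\lambda)]$ with each $k_i$ of degree $\le 1$ satisfying $k(\lambda)\Lambda_{\kappa_2}(\lambda) = 0$ is of the form $c\, L_{\kappa_2}(\lambda)$ for a constant row vector $c \in \mathbb{R}^{1\times\kappa_2}$; the general statement then follows by applying this to each scalar row of $\mathcal{K}(\lambda)$ after the natural identification that lets one strip the $\otimes I_n$.

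For the scalar computation the cleanest route is a direct back-substitution. Writing out $k(\lambda)\Lambda_{\kappa_2}(\lambda) = \sum_{i=0}^{\kappa_2} k_i(\lambda)\lambda^{\kappa_2 - i} = 0$ and comparing coefficients of powers of $\lambda$ from the bottom up, one determines the $k_i$ recursively: the constant term of $k_{\kappa_2}$ is free, call it $-c_{\kappa_2}$, the constant term of $k_{\kappa_2-1}$ must cancel the $\lambda$-term of $k_{\kappa_2}$, and so on, while the degree constraint $\deg k_i \le 1$ forces each $k_i(\lambda) = -c_i + c_{i+1}\lambda$ with the convention $c_0 = c_{\kappa_2 + 1} = 0$. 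This is precisely the $i$-th entry of $c\,L_{\kappa_2}(\lambda)$ for $c = [c_1\ \cdots\ c_{\kappa_2}]$. Alternatively one can observe that the map $C \mapsto C L_{\kappa_2}(\lambda)$ from $\mathbb{R}^{\kappa_1 m \times \kappa_2 n}$ into the space of pencils annihilated by $\Lambda_{\kappa_2}\otimes I_n$ is injective (since $L_{\kappa_2}(\lambda)$ has full row rank for every $\lambda$, or simply because its coefficient matrices are left-invertible) and then match dimensions: the target space of admissible pencils has $\mathbb{R}$-dimension $\kappa_1 m\cdot \kappa_2 n$ as well, forcing surjectivity.

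The only genuine obstacle is bookkeeping: keeping the block structure and the $\otimes I_n$ factors straight while doing the coefficient comparison, and making sure the degree hypothesis (that $\mathcal{K}(\lambda)$ is a \emph{pencil}, not an arbitrary matrix polynomial) is used — it is exactly what pins the combination down to a \emph{constant} matrix $C$ rather than a polynomial matrix $C(\lambda)$. I expect no conceptual difficulty beyond this; the dimension-count variant in particular sidesteps the recursion entirely and is probably the shortest write-up, so I would present that and relegate the explicit back-substitution to a one-line remark.
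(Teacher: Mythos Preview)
Your proposal is correct, and both routes you sketch work. The paper's own proof takes a closely related but differently packaged variant of your first approach: rather than back-substituting coefficient by coefficient, it writes $\mathcal{K}(\lambda) = [\,k_1 \mid K_1\,]\lambda + K_0$ (with $k_1$ the first block column of the $\lambda$-coefficient), guesses $C = K_1$ outright, and shows that the difference $\Delta\mathcal{K}(\lambda) := \mathcal{K}(\lambda) - K_1(L_{\kappa_2}(\lambda)\otimes I_n)$ is constant in $\lambda$ except possibly in its first block column; the annihilation identity together with the linear independence of $1,\lambda,\ldots,\lambda^{\kappa_2+1}$ then forces $\Delta\mathcal{K}\equiv 0$. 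Conceptually this is the same coefficient comparison as your back-substitution, just organized as guess-and-verify, which avoids writing out the recursion and works directly at the block level without first reducing to the scalar case. Your dimension-counting alternative is genuinely different and arguably the cleanest of the three; note only that computing the dimension of the target space still amounts to checking that the $\kappa_2+2$ coefficient constraints (from $\lambda^0,\ldots,\lambda^{\kappa_2+1}$) are independent, which is immediate but should be stated.
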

\begin{proof}
Assume ${\mathcal{K}(\lambda)} =  [\, k_1 \; | \; K_1 \, ]\lambda + K_0$ with $k_1 \in \mathbb{R}^{\kappa_1 m
\times n}$
satisfies
(\ref{nullspace_equation}). Then
$$ \Delta \mathcal{K}(\lambda) = \mathcal{K}(\lambda) - K_1(L_{\kappa_2}(\lambda) \otimes I_n) =: [ \, d_1(\lambda)
\; | \; D_1
\, ] $$ is independent of $\lambda$ in all but its first block-column $d_1(\lambda) \in
\mathbb{R}_1[\lambda]^{\kappa_1m \times
n}$. However, from $\Delta \mathcal{K}(\lambda) ( \Lambda_{\kappa_2} (\lambda) \otimes I_n)$ we obtain
\begin{align*}  \mathcal{K}(\lambda) \big( \Lambda_{\kappa_2}(\lambda) \otimes I_n \big) - K_1 \big( L_{\kappa_2}(\lambda) \otimes I_n \big) \big( \Lambda_{\kappa_2}(\lambda) \otimes I_n \big) = 0_{\kappa_1 m \times n}, \end{align*}
so $\Delta \mathcal{K}(\lambda)$ still satisfies (\ref{nullspace_equation}).
Notice that $\Delta \mathcal{K}(\lambda) ( \Lambda_{\kappa_2} (\lambda) \otimes I_n)$ has dimension $\kappa_1 m \times n$ and that every $m \times n$ block is a matrix polynomial in the variables $1, \lambda , \lambda^2, \ldots , \lambda^{\kappa_2 +1}$. Due to the basis property
of the
monomials  this implies $\Delta \mathcal{K}(\lambda) \equiv 0$ and proves the statement.
\end{proof}

Further on, via block-transposition it can be seen    
that any $(\kappa_1 + 1)m \times
\kappa_2 n$ matrix pencil $\mathcal{K}(\lambda)$  satisfying $(\Lambda_{\kappa_1}(\lambda)^T \otimes
I_m)\mathcal{K}(\lambda) = 0$
has an expression $\mathcal{K}(\lambda) = (L_{\kappa_1}(\lambda)^T \otimes
I_m)C$ for some matrix $C \in \mathbb{R}^{\kappa_1 m \times \kappa_2 n}$. Hence, regarding (\ref{ansatzequation3})
once more,
we obtain
$$
 \mathcal{L}_{21}(\lambda) = C_1 \big( L_{\epsilon}(\lambda) \otimes I_n \big) \qquad \mathcal{L}_{12}(\lambda) =
\big( L_{\eta}(\lambda)^T \otimes I_m \big)C_2
$$
for matrices $C_1 \in \mathbb{R}^{\epsilon n \times \epsilon n}$ and $ C_2 \in \mathbb{R}^{\eta m \times \eta m}$.
Now, considering again the $(1,1)$-block in (\ref{ansatzequation3}) and an $m \times n$ matrix polynomial
$P(\lambda) =
\sum_{i=0}^k P_i \lambda^i$ of degree $k = \epsilon + \eta + 1$, observe that the $(\eta + 1)m  \times (\epsilon +
1)n$ matrix
pencil
$$
 \Sigma_{\eta,P} (\lambda) = \begin{bmatrix} \lambda P_k + P_{k-1} & P_{k-2} & \cdots &  P_{\eta } \\
               & & & P_{\eta - 1} \\ & 0_{\eta m \times \epsilon n} & & \vdots \\ & & & P_0
              \end{bmatrix} $$
satisfies $\Phi(\Sigma_{\eta , P}(\lambda)) = P(\lambda)$.
Therefore, for any other $(\eta + 1)m  \times (\epsilon + 1)n$ pencil $Q(\lambda)$ satisfying $\Phi( Q(\lambda)) =
\alpha
P(\lambda)$ for some $\alpha \in \mathbb{R}$
we obtain $$ \Phi \big( \alpha \Sigma_{\eta,P}(\lambda) - Q(\lambda) \big)
= \alpha \Phi \big( \Sigma_{\eta , P}(\lambda) \big) - \Phi \big( Q(\lambda) \big) = \alpha P(\lambda) - \alpha
P(\lambda) =
{0.}$$
Thus, interpreting $\Phi$ as a function mapping $(\eta + 1)m \times (\epsilon + 1)n$ matrix
pencils to $m \times n$ matrix polynomials $P(\lambda)$ of degree $\text{deg}(P) \leq \epsilon + \eta +
1$, $\Phi$ is
linear. Moreover, $\Phi$ is easily seen to be surjective. The homomorphism theorem gives
$$
 \mathbb{R}_1[\lambda]^{(\eta +1)m \times (\epsilon +1)n} / \textnormal{null}(\Phi) \, \cong \,
\mathbb{R}_{\epsilon +
\eta + 1}[\lambda]^{m \times n}
$$
and thus $\text{dim}(\text{null}(\Phi)) =\big( \eta( \epsilon + 1) + (\eta + 1) \epsilon \big)mn.$

Now note that the set $\mathcal{N}_{\epsilon , \eta}$ of all $(\eta + 1)m \times (\epsilon + 1)n$ matrix pencils $\mathcal{M}(\lambda)$ of the form
\begin{equation}
 \mathcal{M}(\lambda) = B_1 \big( L_{\epsilon}(\lambda) \otimes I_n \big) +  \big( L_{\eta}(\lambda)^T \otimes I_m
\big) B_2 \label{pencilMB1B2}
\end{equation}
with arbitrary matrices $B_1 \in \mathbb{R}^{(\eta + 1)m \times \epsilon n}$ and $B_2 \in \mathbb{R}^{\eta m \times
(\epsilon + 1)n}$ form a real vector space that is completely contained in
$\text{null}(\Phi)$. Following (\ref{pencilMB1B2}), the mapping $(B_1,B_2) \mapsto \mathcal{M}(\lambda)$ is injective since $\mathcal{M}(\lambda) = 0$ can only hold for $B_1=B_2=0$ (consider once more the form of $L_{\epsilon}(\lambda)$ and $L_{\eta}(\lambda)^T$, see (\ref{Lkappa})).

Therefore, we conclude that $\mathcal{N}_{\epsilon, \eta} = \textnormal{null}( \Phi)$ and
obtain the following characterization of $\mathbb{G}_{\eta + 1}(P)$.

\begin{theorem}[Characterization of $\mathbb{G}_{\eta + 1}(P)$] \label{thm_generalspace} \ \\
Let $P(\lambda)$ an $m \times n$ matrix polynomial
of degree $k= \eta + \epsilon + 1$.
Then $\mathbb{G}_{\eta + 1}(P)$ is a vector space over $\mathbb{R}$ having dimension
$$
\textnormal{dim}( \mathbb{G}_{\eta + 1}(P)) = ( \epsilon n + \eta m)^2 + (\epsilon + \eta)mn + 1.
$$
Any matrix pencil $\mathcal{L}(\lambda) \in \mathbb{G}_{\eta + 1}(P)$ may be characterized as
   \begin{equation}
\mathcal{L}(\lambda) = {\small \left[
\begin{array}{c|c} \alpha \Sigma_{\eta ,P}(\lambda) + B_1(L_{\epsilon}(\lambda) \otimes I_n)
+ (L_{\eta}(\lambda)^T \otimes I_m)B_2 & (L_{\eta}(\lambda)^T \otimes I_m)C_2 \\ \hline C_1 (L_{\epsilon}(\lambda)
\otimes I_n) &
0 \end{array} \right] }\label{def_blockspace_explicit}
\end{equation}
with some $\alpha \in \mathbb{R}$ and some matrices $B_1 \in \mathbb{R}^{( \eta + 1)m \times \epsilon n}, B_2 \in
\mathbb{R}^{
\eta m \times ( \epsilon + 1)n}$, $C_1 \in \mathbb{R}^{ \epsilon n \times \epsilon n}$ and $ C_2 \in
\mathbb{R}^{\eta m \times
\eta m}$.
\end{theorem}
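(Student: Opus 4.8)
The plan is to assemble the characterization from the ingredients already developed in the discussion preceding the statement, and then to count dimensions. First I would argue that every pencil of the form (\ref{def_blockspace_explicit}) indeed lies in $\mathbb{G}_{\eta+1}(P)$: this is a direct verification using $\Phi(\Sigma_{\eta,P}(\lambda)) = P(\lambda)$, the inclusion $\mathcal{N}_{\epsilon,\eta} = \textnormal{null}(\Phi)$ (so the $B_1,B_2$ terms contribute nothing to the $(1,1)$-block after applying $\Phi$), together with $(L_\epsilon(\lambda)\otimes I_n)(\Lambda_\epsilon(\lambda)\otimes I_n) = 0$ and $(\Lambda_\eta(\lambda)^T\otimes I_m)(L_\eta(\lambda)^T\otimes I_m) = 0$, which kill the off-diagonal blocks in (\ref{ansatzequation3}). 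Hence the right-hand side of (\ref{ansatzequation1}) equals $\alpha P(\lambda)\oplus 0_{\epsilon n\times\eta m}$, confirming membership.

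Conversely, I would take an arbitrary $\mathcal{L}(\lambda)\in\mathbb{G}_{\eta+1}(P)$ written in its natural partition as in (\ref{ansatzequation2})--(\ref{ansatzequation3}). The $(2,2)$-block of (\ref{ansatzequation3}) forces $\mathcal{L}_{22}(\lambda) = 0$. The $(2,1)$- and $(1,2)$-blocks give $\mathcal{L}_{21}(\lambda)(\Lambda_\epsilon\otimes I_n) = 0$ and $(\Lambda_\eta^T\otimes I_m)\mathcal{L}_{12}(\lambda) = 0$; applying Lemma \ref{lem_nullspace} (and its block-transposed version noted just after the lemma) yields $\mathcal{L}_{21}(\lambda) = C_1(L_\epsilon(\lambda)\otimes I_n)$ and $\mathcal{L}_{12}(\lambda) = (L_\eta(\lambda)^T\otimes I_m)C_2$ for constant matrices $C_1\in\mathbb{R}^{\epsilon n\times\epsilon n}$, $C_2\in\mathbb{R}^{\eta m\times\eta m}$. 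Finally, the $(1,1)$-block says $\Phi(\mathcal{L}_{11}(\lambda)) = \alpha P(\lambda)$ for some $\alpha\in\mathbb{R}$; since $\Phi(\alpha\Sigma_{\eta,P}(\lambda)) = \alpha P(\lambda)$ as well, the difference $\mathcal{L}_{11}(\lambda) - \alpha\Sigma_{\eta,P}(\lambda)$ lies in $\textnormal{null}(\Phi) = \mathcal{N}_{\epsilon,\eta}$, hence equals $B_1(L_\epsilon(\lambda)\otimes I_n) + (L_\eta(\lambda)^T\otimes I_m)B_2$ for suitable $B_1,B_2$. Substituting these four identities into the $2\times 2$ block form of $\mathcal{L}(\lambda)$ produces exactly (\ref{def_blockspace_explicit}).

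For the dimension count, I would observe that the map sending a tuple $(\alpha,B_1,B_2,C_1,C_2)$ to the pencil in (\ref{def_blockspace_explicit}) is linear and surjective onto $\mathbb{G}_{\eta+1}(P)$ by the argument above; the remaining task is to determine its kernel. The blocks $C_1(L_\epsilon\otimes I_n)$ and $(L_\eta^T\otimes I_m)C_2$ sit in the $(2,1)$- and $(1,2)$-positions and, by the injectivity already noted for the form (\ref{Lkappa}), vanish only if $C_1 = 0$ and $C_2 = 0$; so the kernel projects trivially onto the $C_1,C_2$ coordinates, and what is left is the condition $\alpha\Sigma_{\eta,P}(\lambda) + B_1(L_\epsilon\otimes I_n) + (L_\eta^T\otimes I_m)B_2 = 0$ in the $(1,1)$-block. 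Applying $\Phi$ gives $\alpha P(\lambda) = 0$, hence $\alpha = 0$ (here the standing assumption $\textnormal{deg}(P)\geq 2$, so $P\neq 0$, is used), and then $B_1(L_\epsilon\otimes I_n) + (L_\eta^T\otimes I_m)B_2 = 0$ means $(B_1,B_2)$ lies in the kernel of the injective parametrization of $\mathcal{N}_{\epsilon,\eta}$, forcing $B_1 = B_2 = 0$. Thus the parametrization is a bijection, and
$$
\textnormal{dim}(\mathbb{G}_{\eta+1}(P)) = 1 + (\eta+1)m\cdot\epsilon n + \eta m\cdot(\epsilon+1)n + (\epsilon n)^2 + (\eta m)^2,
$$
which rearranges to $(\epsilon n + \eta m)^2 + (\epsilon+\eta)mn + 1$ after noting that $2\epsilon\eta mn + (\eta + \epsilon)mn$ collects the two $B$-contributions and the cross term $2\epsilon n\cdot\eta m$ is absent from the block sizes but reappears exactly from $(\epsilon n+\eta m)^2 = (\epsilon n)^2 + (\eta m)^2 + 2\epsilon\eta mn$. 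I expect the only real subtlety to be bookkeeping in this dimension arithmetic and making sure the injectivity claims for $C_1,C_2$ and for $(B_1,B_2)$ are invoked in the right order; the structural part of the proof is essentially a reassembly of facts already established in the text.
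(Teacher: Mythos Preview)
Your proposal is correct and follows essentially the same route as the paper: the discussion preceding the theorem already establishes the block-by-block analysis via Lemma~\ref{lem_nullspace} (and its block-transposed counterpart) for $\mathcal{L}_{12}$ and $\mathcal{L}_{21}$, the identification $\mathcal{N}_{\epsilon,\eta}=\textnormal{null}(\Phi)$ for the $(1,1)$-block, and the injectivity of $(B_1,B_2)\mapsto\mathcal{M}(\lambda)$, so your proof is a faithful assembly of those pieces. If anything, you are slightly more explicit than the paper in spelling out the injectivity of the full parametrization $(\alpha,B_1,B_2,C_1,C_2)\mapsto\mathcal{L}(\lambda)$, which the paper leaves implicit in the sentence after (\ref{def_blockspace_explicit}).
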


The dimension of $\mathbb{G}_{\eta + 1}(P)$ is just the sum of the dimensions of the constant matrices in
expression
(\ref{def_blockspace_explicit}) plus one for the scalar $\alpha$.
Moreover, note that any matrix pencil $\mathcal{L}(\lambda) \in \mathbb{G}_{\eta + 1}(P)$ of the form
(\ref{def_blockspace_explicit}) can be factorized uniquely as
 \begin{equation}
\mathcal{L}(\lambda) = \left[ \begin{array}{c|c}  I_{(\eta + 1)m} & B_1 \\ \hline 0 & C_1 \end{array} \right]
 \left[ \begin{array}{c|c} \alpha \Sigma_{\eta ,P}(\lambda) &  L_{\eta}(\lambda)^T \otimes I_m \\ \hline
L_{\epsilon}(\lambda) \otimes I_n & 0 \end{array} \right] \left[ \begin{array}{c|c} I_{(\epsilon +
1)n} & 0 \\ \hline B_2 & C_2 \end{array} \right].
\label{def_blockspace}
\end{equation}
Notice that this factorization is equivalent to (3.5) in \cite{DoBPSZ16}.

\begin{example} \label{ex_Gspace}
Let $P(\lambda) = \sum_{i=0}^6 P_i \lambda^i$ be an $m \times n$ matrix polynomial of degree
$\textnormal{deg}(P)=6$ and
consider the case $\eta = 3, \epsilon = 2$. According to (\ref{def_blockspace_explicit}) we
may construct the following matrix pencil
 $$ \small{ \mathcal{L}(\lambda) = \left[ \begin{array}{ccc|ccc} \lambda P_6 + P_5 & P_4 & P_3 & 0 & -F &
H \\ A &
-(B+ \lambda
A) & P_2 & 0 & E + \lambda F & -  \lambda H \\ -P_3 & \lambda B & P_1  & D & - \lambda E &
0
\\ \lambda P_3 & 0 & P_0 & - \lambda D & 0 & 0\\ \hline C & -(G + \lambda C) & \lambda G & 0 & 0 & 0
\\
0 & C & - \lambda C & 0 & 0 & 0 \end{array} \right] } $$
with arbitrary matrices $A, B \in \mathbb{R}^{m \times n}, C, G \in \mathbb{R}^{n \times n}$
and $D,E,F,H \in \mathbb{R}^{m \times m}$. It is not hard to see that $\mathcal{L}(\lambda) \in \mathbb{G}_{4}(P)$ since
$\mathcal{L}(\lambda)$ may be expressed in the form (\ref{def_blockspace}) with
$$\begin{bmatrix} B_1 \\ C_1 \end{bmatrix} = \left[ \begin{array}{cc} 0 & 0 \\ - A &
 0 \\ 0
&  0 \\ 0 & 0  \\ \hline  -C & G \\ 0 & -C \end{array}
 \right], \; \text{and} \; \begin{bmatrix} B_2 & C_2 \end{bmatrix} = \left[ \begin{array}{ccc|ccc} 0 & 0 & 0 & 0 &
F &
-H \\ 0 & B & 0 & 0 & E & 0 \\ P_3 & 0 & 0 & -D & 0 & 0 \end{array} \right], $$
and $\alpha=1$. As the next theorem
will reveal, $\mathcal{L}(\lambda)$ is a strong linearization for $P(\lambda)$ if $C$, $D, E$ and $H$ are all
nonsingular. In the
case of $P(\lambda)$ being square and regular, these three conditions turn out to be sufficient and necessary for
$\mathcal{L}(\lambda)$ being a strong linearization for $P(\lambda)$. Surprisingly, the choice of $A$ and $B$ does
not have any
effect in that regard.
\end{example}

The next theorem presents a quite
natural linearization condition for matrix pencils in block Kronecker ansatz spaces (see also \cite[Thm. 3.8]{DoBPSZ16}).
Notice that we a priori do not require $P(\lambda)$ to be regular or even square.

\begin{theorem}[Linearization Condition for $\mathbb{G}_{\eta + 1}(P)$] \label{thm_lincondition} \ \\
Let $P(\lambda)$ be an $m \times n$ matrix polynomial and $\mathcal{L}(\lambda) \in \mathbb{G}_{\eta + 1}(P)$ as in
(\ref{def_blockspace}).
Then $\mathcal{L}(\lambda)$ is a strong linearization for $P(\lambda)$ if $\alpha \neq 0,$ and
\begin{equation}{\small
\left[ \begin{array}{c|c}  I_{(\eta + 1)m} & B_1 \\ \hline 0 & C_1 \end{array} \right] \in \textnormal{GL}_{(\eta +
1)m + \epsilon n}(\mathbb{R}), \; \, \text{and} \; \, \left[ \begin{array}{c|c} I_{(\epsilon +
1)n} & 0 \\ \hline B_2 & C_2 \end{array} \right] \in \textnormal{GL}_{(\epsilon + 1)n + \eta m}( \mathbb{R}).}
\label{equ_lincondition}
\end{equation}
Certainly (\ref{equ_lincondition}) is equivalent to $\textnormal{det}(C_1), \textnormal{det}(C_2) \neq 0$.
\end{theorem}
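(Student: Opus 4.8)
The plan is to exploit the factorization \eqref{def_blockspace} directly. Write $\mathcal{L}(\lambda) = R\, \mathcal{K}(\lambda)\, S$ where $R$ and $S$ are the two constant block matrices in \eqref{equ_lincondition} and $\mathcal{K}(\lambda)$ is the middle pencil; when $R$ and $S$ are invertible, this is a strict equivalence, and strict equivalence preserves strong linearizations (as recalled in Section~\ref{sec2}). So it suffices to show that, for $\alpha \neq 0$, the middle pencil
\[
\mathcal{K}(\lambda) = \left[\begin{array}{c|c} \alpha \Sigma_{\eta,P}(\lambda) & L_\eta(\lambda)^T \otimes I_m \\ \hline L_\epsilon(\lambda) \otimes I_n & 0 \end{array}\right]
\]
is a strong linearization for $P(\lambda)$. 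First I would observe that $\mathcal{K}(\lambda)$ is (up to the harmless scalar $\alpha$, which can be absorbed by scaling, e.g. replacing $P$ by $\alpha P$, which does not change the finite/infinite elementary divisors up to the unit $\alpha$) exactly a block Kronecker pencil of the form \eqref{blockKronpencil} with $M_0 + \lambda M_1 = \alpha \Sigma_{\eta,P}(\lambda)$. Since $\Phi(\Sigma_{\eta,P}(\lambda)) = P(\lambda)$, i.e.\ $(\Lambda_\eta^T \otimes I_m)\Sigma_{\eta,P}(\lambda)(\Lambda_\epsilon \otimes I_n) = P(\lambda)$, the polynomial denoted $Q(\lambda)$ in the block Kronecker pencil theorem of \cite{DopLPVD16} is precisely $\alpha P(\lambda)$, which has degree $\le \epsilon + \eta + 1 = k = \deg(P)$. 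Hence \cite[Section~5]{DopLPVD16} immediately gives that $\mathcal{K}(\lambda)$ is a strong linearization for $\alpha P(\lambda)$, and therefore for $P(\lambda)$.

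The second step is to handle the equivalence \eqref{def_linearization}: multiplying a strong linearization by constant invertible matrices on left and right yields another strong linearization, because if $U(\lambda)\mathcal{K}(\lambda)V(\lambda) = I_s \oplus P(\lambda)$ then $(U(\lambda)R^{-1})\mathcal{L}(\lambda)(S^{-1}V(\lambda)) = I_s \oplus P(\lambda)$ with $U(\lambda)R^{-1}$, $S^{-1}V(\lambda)$ still unimodular, and the same argument applies to $\operatorname{rev}_1$ since $\operatorname{rev}_1(\mathcal{L}(\lambda)) = R\,\operatorname{rev}_1(\mathcal{K}(\lambda))\,S$ (reversal of a pencil commutes with constant pre/postmultiplication). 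This is routine given the material already in Section~\ref{sec2}.

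For the final sentence of the statement, the equivalence of \eqref{equ_lincondition} with $\det(C_1)\det(C_2) \neq 0$ is just a block-triangular determinant computation: $\det\!\big[\begin{smallmatrix} I & B_1 \\ 0 & C_1\end{smallmatrix}\big] = \det(I)\det(C_1) = \det(C_1)$ and similarly for the second matrix, whose determinant equals $\det(C_2)$. Hence both matrices are invertible if and only if both $C_1$ and $C_2$ are.

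The only point requiring a little care — and the main obstacle, such as it is — is the bookkeeping around the scalar $\alpha$: one must make sure that when $\alpha \neq 0$ the pencil $\alpha \Sigma_{\eta,P}(\lambda)$ legitimately plays the role of $M_0 + \lambda M_1$ in \eqref{blockKronpencil} and that the associated polynomial there is $\alpha P(\lambda)$ rather than $P(\lambda)$; since $\alpha P(\lambda)$ and $P(\lambda)$ have the same finite and infinite elementary divisors (they differ by a nonzero scalar, hence by a strict equivalence $P(\lambda) \mapsto \alpha P(\lambda)$ with constant matrices $\alpha I_m$, $I_n$), a strong linearization for one is a strong linearization for the other. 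Everything else is a direct appeal to the block Kronecker pencil result of \cite{DopLPVD16} combined with preservation of strong linearizations under strict equivalence. Note that this argument gives sufficiency only; necessity in the square regular case is the subject of a later result, as announced in Example~\ref{ex_Gspace}.
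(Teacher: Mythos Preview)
Your proof is correct and follows essentially the same route as the paper's own argument: use the factorization \eqref{def_blockspace} to reduce to the middle pencil via strict equivalence, then invoke \cite[Thm.~5.2]{DopLPVD16} to identify that middle pencil as a block Kronecker pencil and hence a strong linearization for $\alpha P(\lambda)$, and finally pass from $\alpha P(\lambda)$ to $P(\lambda)$ when $\alpha \neq 0$. Your write-up is more explicit about the preservation of strong linearizations under strict equivalence and about the determinant computation for the last sentence, but the substance is the same.
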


\begin{proof}
Assuming the matrices
   \begin{equation}
U = \left[ \begin{array}{c|c}  I_{(\eta + 1)m} & B_1 \\ \hline 0 & C_1 \end{array} \right] \; \, \text{and} \; \,
V = \left[ \begin{array}{c|c} I_{(\epsilon + 1)n} & 0 \\ \hline B_2 & C_2 \end{array} \right]
\label{matrices_UV}
   \end{equation}
are nonsingular, $\mathcal{L}(\lambda)$ in (\ref{def_blockspace}) is strictly equivalent to
\begin{equation}
\mathcal{F}_{\alpha , \eta , P}(\lambda) := \left[ \begin{array}{c|c} \alpha \Sigma_{\eta ,P}(\lambda) &
L_{\eta}(\lambda)^T \otimes I_m \\ \hline L_{\epsilon}(\lambda) \otimes I_n & 0 \end{array} \right].
\label{m_alpha}
\end{equation}
According to \cite[Thm. 5.2]{DopLPVD16} the matrix pencil $\mathcal{F}_{\alpha , \eta , P}(\lambda)$ is a strong
linearization for
$\alpha P(\lambda)$. Thus $\alpha \neq 0$ implies $\mathcal{F}_{\alpha , \eta , P}(\lambda)$ to be a strong
linearization for
$P(\lambda)$, so $\mathcal{L}(\lambda)$ is a strong linearization for $P(\lambda)$ as well.
\end{proof}

\begin{remark} \label{rem_linequiv}
Given the case of a regular $n \times n$ matrix polynomial $P(\lambda)$, the statement in Theorem
\ref{thm_lincondition}
becomes
an equivalence. In fact, if $\mathcal{L}(\lambda)$ as in (\ref{def_blockspace}) is a strong linearization for some
regular $P(\lambda)$, $\mathcal{L}(\lambda)$ is necessarily regular. This implies the matrices $U$ and $V$ to be
nonsingular and the
scalar $\alpha$ to be nonzero. However, for singular matrix polynomials $P(\lambda),$ (\ref{equ_lincondition}) is not necessary for $\mathcal{L}(\lambda)$ to be a strong linearization. For instance, consult \cite[Ex. 2]{DeTDM09} for an example of a strong linearization $\mathcal{L}(\lambda) \in \mathbb{G}_1(P)$ that does not satisfy (\ref{equ_lincondition}). A sufficient condition for strong linearizations in $\mathbb{G}_1(P)$ and $\mathbb{G}_k(P)$ of singular matrix polynomials $P(\lambda)$ is given in \cite[Sec. 5]{FassS16}.
\end{remark}

In \cite[Thm. 4.7]{MacMMM06} and \cite[Thm. 4.4]{DeTDM09}
it was shown that almost every pencil in $\mathbb{L}_1(P)$ (and $\mathbb{L}_2(P)$) is a strong linearization for
the (regular or singular) square matrix polynomial $P(\lambda)$. Here, a similar statement holds for
$\mathbb{G}_{\eta +
1}(P)$ and rectangular, i.e. not necessarily square
matrix polynomials $P(\lambda)$.

\begin{theorem}[Linearizations are Generic in $\mathbb{G}_{\eta + 1}(P)$] \label{thm_generic} \ \\
Let $P(\lambda)$ be an $m \times n$ matrix polynomial of degree $k = \epsilon + \eta + 1$.
Then almost every matrix pencil in $\mathbb{G}_{\eta + 1}(P)$ is a strong linearization for $P(\lambda)$.
\end{theorem}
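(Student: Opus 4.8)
The plan is to exploit the explicit parametrization of $\mathbb{G}_{\eta+1}(P)$ from Theorem~\ref{thm_generalspace} together with the sufficient linearization condition from Theorem~\ref{thm_lincondition}. A generic pencil in $\mathbb{G}_{\eta+1}(P)$ is described by the data $(\alpha, B_1, B_2, C_1, C_2)$, and Theorem~\ref{thm_lincondition} tells us that $\mathcal{L}(\lambda)$ is a strong linearization for $P(\lambda)$ whenever $\alpha \neq 0$, $\det(C_1) \neq 0$ and $\det(C_2) \neq 0$. The idea is simply to observe that each of these three conditions fails only on a proper algebraic subvariety of the parameter space, and a finite intersection of the complements of proper algebraic subvarieties is still a dense open (hence full-measure, ``generic'') subset.

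More concretely, I would first fix a linear isomorphism between $\mathbb{G}_{\eta+1}(P)$ and the affine space $\mathbb{R}^N$ with $N = (\epsilon n + \eta m)^2 + (\epsilon+\eta)mn + 1$, under which a pencil $\mathcal{L}(\lambda)$ corresponds to the tuple of free entries $(\alpha, B_1, B_2, C_1, C_2)$; this is legitimate precisely because Theorem~\ref{thm_generalspace} asserts that every element of $\mathbb{G}_{\eta+1}(P)$ has a \emph{unique} such representation (the factorization~(\ref{def_blockspace}) is unique). Next I would define the polynomial
\begin{equation}
 p(\alpha, B_1, B_2, C_1, C_2) \;=\; \alpha \cdot \det(C_1) \cdot \det(C_2),
\end{equation}
which is a nonzero polynomial in the coordinates of $\mathbb{R}^N$ (it is clearly not identically zero, since e.g. $\alpha=1$, $C_1 = I_{\epsilon n}$, $C_2 = I_{\eta m}$, $B_1 = 0$, $B_2 = 0$ gives $p=1$). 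By Theorem~\ref{thm_lincondition}, every parameter tuple with $p \neq 0$ yields a strong linearization, so the set of pencils in $\mathbb{G}_{\eta+1}(P)$ that are \emph{not} strong linearizations is contained in the zero set $\mathcal{Z}(p)$, a proper algebraic subset of $\mathbb{R}^N$.

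Finally I would invoke the standard fact that the zero set of a nonzero polynomial on $\mathbb{R}^N$ is closed, nowhere dense, and has Lebesgue measure zero, so its complement is open and dense (and of full measure) --- this is exactly what ``almost every'' means here, and matches the usage in \cite[Thm.~4.7]{MacMMM06} and \cite[Thm.~4.4]{DeTDM09}. Since the property of being a strong linearization is invariant under the isomorphism $\mathbb{G}_{\eta+1}(P) \cong \mathbb{R}^N$, the conclusion transfers back to $\mathbb{G}_{\eta+1}(P)$. I do not anticipate a serious obstacle; the only point requiring mild care is the bookkeeping to confirm that $(\alpha, B_1, B_2, C_1, C_2)$ really are independent affine coordinates on the whole space $\mathbb{G}_{\eta+1}(P)$ (so that $p$ is genuinely a polynomial on $\mathbb{R}^N$ and not merely on a subset), but this is immediate from the uniqueness half of Theorem~\ref{thm_generalspace}. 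One could also remark that for regular square $P(\lambda)$ the converse inclusion holds by Remark~\ref{rem_linequiv}, so in that case $\mathcal{Z}(p)$ is exactly the locus of non-linearizations, though this refinement is not needed for the stated genericity result.
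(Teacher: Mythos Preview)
Your proposal is correct and follows essentially the same route as the paper: the paper's proof consists of the single observation that Theorem~\ref{thm_generic} follows directly from Theorem~\ref{thm_lincondition} because $\mathbb{R}\setminus\{0\}$, $\textnormal{GL}_{\epsilon n}(\mathbb{R})$ and $\textnormal{GL}_{\eta m}(\mathbb{R})$ are dense (generic) in $\mathbb{R}$, $\mathbb{R}^{\epsilon n\times\epsilon n}$ and $\mathbb{R}^{\eta m\times\eta m}$, respectively. Your version simply spells this out more carefully by packaging the three conditions into the single polynomial $p=\alpha\det(C_1)\det(C_2)$ and invoking the standard fact about zero sets of nonzero polynomials.
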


Theorem \ref{thm_generic} follows directly from Theorem \ref{thm_lincondition} since $\mathbb{R} \setminus \lbrace
0 \rbrace$, $
\textnormal{GL}_{\epsilon n}(\mathbb{R})$
and $\textnormal{GL}_{\eta m}(\mathbb{R})$ are dense subsets of $\mathbb{R}$,
$\mathbb{R}^{\epsilon n \times \epsilon n}$
and $\mathbb{R}^{\eta m \times \eta m}$ respectively. Furthermore, notice that all the strong linearizations in
$\mathbb{G}_{\eta + 1}(P)$ are strong block minimal bases pencils, which have also been introduced in
\cite{DopLPVD16}.

Using \cite[Thm. 5.2]{DopLPVD16}, we now prove the \textit{Strong
Linearization Theorem} for block Kronecker ansatz spaces in the style of \cite[Thm. 4.3]{MacMMM06}. Showing the
connection
between the linearization property and the regularity of matrix pencils, we necessarily focus on regular (i.e.
square)
matrix polynomials.

\begin{theorem}[Strong Linearization Theorem for $\mathbb{G}_{\eta + 1}(P)$] \label{thm_master1} \ \\
Let $P(\lambda)$ be an $n \times n$ regular matrix polynomial and $\mathcal{L}(\lambda) \in \mathbb{G}_{\eta +
1}(P)$.
Then the following statements are equivalent
\begin{enumerate}
 \item $\mathcal{L}(\lambda)$ is a linearization for $P(\lambda)$.
 \item $\mathcal{L}(\lambda)$ is a regular matrix pencil.
 \item $\mathcal{L}(\lambda)$ is a strong linearization for $P(\lambda)$.
\end{enumerate}
\end{theorem}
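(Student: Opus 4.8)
The plan is to prove the three statements equivalent cyclically, $(3)\Rightarrow(1)\Rightarrow(2)\Rightarrow(3)$, using throughout the explicit factorization $\mathcal{L}(\lambda)=U\,\mathcal{F}_{\alpha,\eta,P}(\lambda)\,V$ from Theorem~\ref{thm_generalspace} (equation~(\ref{def_blockspace})), where $U$ and $V$ are the constant block-triangular matrices appearing there as outer factors. Note first that, since $P(\lambda)$ is $n\times n$, every $\mathcal{L}(\lambda)\in\mathbb{G}_{\eta+1}(P)$ is a square $kn\times kn$ pencil (the identity $(\eta+1)m+\epsilon n=(\epsilon+1)n+\eta m$ holds when $m=n$), so it is meaningful to speak of its regularity. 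The implication $(3)\Rightarrow(1)$ is immediate from the definition of a (strong) linearization.

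For $(1)\Rightarrow(2)$ I would take determinants in a defining relation $\tilde U(\lambda)\mathcal{L}(\lambda)\tilde V(\lambda)=I_s\oplus P(\lambda)$ with $\tilde U(\lambda),\tilde V(\lambda)$ unimodular. The determinants of $\tilde U(\lambda)$ and $\tilde V(\lambda)$ are nonzero real constants, while $\textnormal{det}(P(\lambda))\not\equiv 0$ by regularity of $P$; hence $\textnormal{det}(\mathcal{L}(\lambda))\not\equiv 0$, i.e.\ $\mathcal{L}(\lambda)$ is regular.

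The substance of the proof is $(2)\Rightarrow(3)$. From (\ref{def_blockspace}) and the block-triangular shape of $U$ and $V$ one gets $\textnormal{det}(U)=\textnormal{det}(C_1)$ and $\textnormal{det}(V)=\textnormal{det}(C_2)$, both constants, hence
\[
\textnormal{det}(\mathcal{L}(\lambda))=\textnormal{det}(C_1)\,\textnormal{det}(C_2)\,\textnormal{det}(\mathcal{F}_{\alpha,\eta,P}(\lambda)).
\]
First I would rule out $\alpha=0$: when $\alpha=0$ the pencil $\mathcal{F}_{0,\eta,P}(\lambda)$ no longer involves $P$, being the block anti-diagonal pencil with off-diagonal blocks $L_{\eta}(\lambda)^T\otimes I_m$ and $L_{\epsilon}(\lambda)\otimes I_n$. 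Since $L_{\kappa}(\lambda)$ has full rank $\kappa$ for every $\lambda$, its rank at any $\lambda$ equals $\textnormal{rank}(L_{\eta}^T\otimes I_m)+\textnormal{rank}(L_{\epsilon}\otimes I_n)=\eta m+\epsilon n=(k-1)n<kn$, so $\textnormal{det}(\mathcal{F}_{0,\eta,P}(\lambda))\equiv 0$; by the displayed identity $\mathcal{L}(\lambda)$ would then be singular, contradicting (2). Hence $\alpha\neq 0$. Then $\alpha P(\lambda)$ is regular, so by \cite[Thm.~5.2]{DopLPVD16} the pencil $\mathcal{F}_{\alpha,\eta,P}(\lambda)$ is a strong linearization for $\alpha P(\lambda)$, in particular regular, so $\textnormal{det}(\mathcal{F}_{\alpha,\eta,P}(\lambda))\not\equiv 0$. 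The displayed identity together with regularity of $\mathcal{L}(\lambda)$ then forces $\textnormal{det}(C_1)\neq 0$ and $\textnormal{det}(C_2)\neq 0$, i.e.\ $U$ and $V$ are invertible. With $\alpha\neq 0$ and $U,V$ invertible, Theorem~\ref{thm_lincondition} yields that $\mathcal{L}(\lambda)$ is a strong linearization for $P(\lambda)$, closing the cycle.

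I expect the only genuinely delicate point to be the singularity of $\mathcal{F}_{0,\eta,P}(\lambda)$ — that the ``$\alpha=0$'' slice of $\mathbb{G}_{\eta+1}(P)$ contains no regular pencil. This rests entirely on the rectangular block structure of $\mathcal{F}_{\alpha,\eta,P}$: once the $P$-content is annihilated, the surviving blocks $L_{\eta}^T\otimes I_m$ and $L_{\epsilon}\otimes I_n$ supply only $\eta m$ independent columns and $\epsilon n$ independent rows respectively, falling one $n$-block short of the $kn$ rows/columns of $\mathcal{L}(\lambda)$. Everything else reduces to determinant bookkeeping for block-triangular matrices and to invoking the already-established linearization condition (Theorem~\ref{thm_lincondition}) and \cite[Thm.~5.2]{DopLPVD16}; compare also Remark~\ref{rem_linequiv}, which records the same implications starting from a strong linearization rather than merely from a regular pencil.
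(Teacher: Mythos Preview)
Your proof is correct and follows essentially the same route as the paper's: both establish $(2)\Rightarrow(3)$ by using the factorization (\ref{def_blockspace}) to show that regularity of $\mathcal{L}(\lambda)$ forces $\alpha\neq 0$ and $C_1,C_2$ nonsingular, and then invoke Theorem~\ref{thm_lincondition}. The only differences are cosmetic: the paper first observes that regularity of $\mathcal{L}(\lambda)$ directly forces $U,V$ to be nonsingular (hence $\mathcal{F}_{\alpha,\eta,P}(\lambda)$ regular) and then rules out $\alpha=0$ by exhibiting explicit null vectors of $\mathcal{F}_{0,\eta,P}(\lambda)$ via the ansatz equation~(\ref{ansatzequation1}), whereas you reverse the order and show singularity of $\mathcal{F}_{0,\eta,P}(\lambda)$ by a rank count on its anti-diagonal blocks.
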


\begin{proof}
 Since $3. \Rightarrow 1. \Rightarrow 2.$ is obvious, we only need to show $2. \Rightarrow 3.$ \\
 Assume $\mathcal{L}(\lambda)$ in (\ref{def_blockspace}) to be regular. This certainly
requires the nonsingularity of $U$ and $V$ as in (\ref{matrices_UV}) and
consequently implies the regularity of $\mathcal{F}_{\alpha , \eta , P}(\lambda)$.
Now suppose $\alpha = 0$. Then the ansatz equation (\ref{ansatzequation1}) gives
$$ \mathcal{F}_{0, \eta , P}(\lambda) \big( (\Lambda_{\epsilon} \otimes I_n) \oplus I_{\eta n} \big)e_i =  0 \quad
\text{and}
\quad
e_j^T \big( (\Lambda_{\eta}^T \otimes I_n) \oplus I_{\epsilon n} \big) \mathcal{F}_{0, \eta , P} = 0
$$
for any $1 \leqslant i,j \leqslant n$.
This shows that $\mathcal{F}_{0, \eta ,P}(\lambda)$ can not be regular, a contradiction. Therefore, the assumption
of
$\mathcal{L}(\lambda) \in
\mathbb{G}_{\eta + 1}(\lambda)$ being regular implies $\alpha \neq 0$ and thus the validity of all three conditions
in Theorem
\ref{thm_lincondition}.
\end{proof}

The next theorem shows that the eigenvector recovery for pencils in $\mathbb{G}_{\eta + 1}(P)$ is as easy as for
block Kronecker
pencils \cite[Section 7]{DopLPVD16}.

\begin{theorem} \label{thm_eigenvectors}
 Let $P(\lambda)$ be an $n \times n$ regular matrix polynomial of degree $k=\epsilon + \eta + 1$ and
$\mathcal{L}(\lambda) \in
\mathbb{G}_{\eta + 1}(P)$ be a strong linearization for $P(\lambda)$. Then the following statements hold (with $e_i
\in
\mathbb{R}^k$)
\begin{enumerate}
 \item If $u \in \mathbb{C}^{kn}$ is a right eigenvector of $\mathcal{L}(\lambda)$ with finite eigenvalue $\beta
\in
\mathbb{C}$, then $u^\star = (e_{\epsilon + 1}^T \otimes I_n)u$ is a right eigenvector of $P(\lambda)$
corresponding to the finite
eigenvalue
$\beta$.
\item If $u \in \mathbb{C}^{kn}$ is a right eigenvector of $\mathcal{L}(\lambda)$ with eigenvalue $\infty$, then
 $(e_1^T \otimes
I_n)u$ is a right eigenvector of $P(\lambda)$ with eigenvalue $\infty$.
 \item If $y \in \mathbb{C}^{kn}$ is a left eigenvector of $\mathcal{L}(\lambda)$ with finite eigenvalue $\beta \in
\mathbb{C}$, then $y^\star = (e_{\eta + 1}^T \otimes I_n)y$ is a left eigenvector of $P(\lambda)$
corresponding to the finite
eigenvalue
$\beta$.
\item If $y \in \mathbb{C}^{kn}$ is a left eigenvector of $\mathcal{L}(\lambda)$ with eigenvalue $\infty$, then
$(e_1^T \otimes
I_n)y$ is a left eigenvector of $P(\lambda)$ with eigenvalue $\infty$.
\end{enumerate}
\end{theorem}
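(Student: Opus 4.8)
The plan is to reduce everything to the eigenvector recovery already available for the block Kronecker pencil $\mathcal{F}_{\alpha,\eta,P}(\lambda)$ of \eqref{m_alpha} and then transport the recovered vectors back through the constant factors $U$ and $V$ of \eqref{matrices_UV}. First I would record the structural consequences of the hypothesis: since $\mathcal{L}(\lambda)$ is a strong linearization of the \emph{regular} polynomial $P(\lambda)$, Remark~\ref{rem_linequiv} (via Theorems~\ref{thm_master1} and~\ref{thm_lincondition}) gives $\alpha\neq0$ and the nonsingularity of $U$ and $V$. Hence \eqref{def_blockspace} provides the strict equivalence $\mathcal{L}(\lambda)=U\,\mathcal{F}_{\alpha,\eta,P}(\lambda)\,V$ and, because $U$ and $V$ do not depend on $\lambda$, also $\textnormal{rev}_1(\mathcal{L}(\lambda))=U\,\textnormal{rev}_1(\mathcal{F}_{\alpha,\eta,P}(\lambda))\,V$. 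Moreover $\mathcal{F}_{\alpha,\eta,P}(\lambda)$ is literally an $(\epsilon,n,\eta,n)$-block Kronecker pencil as in \eqref{blockKronpencil} with $M_0+\lambda M_1=\alpha\Sigma_{\eta,P}(\lambda)$, so by \cite[Thm.~5.2]{DopLPVD16} it is a strong linearization of $\Phi(\alpha\Sigma_{\eta,P}(\lambda))=\alpha P(\lambda)$, which (as $\alpha\neq0$) has exactly the same finite and infinite eigenvectors as $P(\lambda)$.

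Next I would invoke the eigenvector recovery for block Kronecker pencils, \cite[Section~7]{DopLPVD16}, applied to $\mathcal{F}_{\alpha,\eta,P}(\lambda)$: a right eigenvector $z\in\mathbb{C}^{kn}$ for a finite eigenvalue $\beta$ has leading part $(\Lambda_\epsilon(\beta)\otimes I_n)x$ with $0\neq x=(e_{\epsilon+1}^T\otimes I_n)z$ a right eigenvector of $P(\lambda)$ for $\beta$; a left eigenvector $w$ for a finite $\beta$ has leading part $(\Lambda_\eta(\bar\beta)\otimes I_n)v$ with $0\neq v=(e_{\eta+1}^T\otimes I_n)w$ a left eigenvector of $P(\lambda)$ for $\beta$; and the analogous statements for $\textnormal{rev}_1(\mathcal{F}_{\alpha,\eta,P}(\lambda))$ single out the \emph{first} block, i.e.\ $(e_1^T\otimes I_n)z$ resp.\ $(e_1^T\otimes I_n)w$ are the corresponding eigenvectors of $\textnormal{rev}_k(P(\lambda))$. (Each of these four facts can alternatively be re-derived in two lines from Lemma~\ref{lem_nullspace}, the identity $(\Lambda_\eta^T\otimes I_m)(L_\eta^T\otimes I_m)=0$, and $\alpha\neq0$, by reading off the block rows of the ansatz equation \eqref{ansatzequation1}.)

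Then I would transport. If $u$ is a right eigenvector of $\mathcal{L}(\lambda)$ for a finite $\beta$, then $\mathcal{L}(\beta)u=0$ and the nonsingularity of $U$ force $\mathcal{F}_{\alpha,\eta,P}(\beta)(Vu)=0$, so $Vu$ is a right eigenvector of $\mathcal{F}_{\alpha,\eta,P}(\lambda)$ for $\beta$ and $(e_{\epsilon+1}^T\otimes I_n)(Vu)$ is a right eigenvector of $P(\lambda)$ for $\beta$. Since $m=n$, $V$ is a $kn\times kn$ matrix whose first $\epsilon+1$ block rows coincide with those of $I_{kn}$ (look at \eqref{matrices_UV}), hence $(e_{\epsilon+1}^T\otimes I_n)V=e_{\epsilon+1}^T\otimes I_n$ with $e_{\epsilon+1}\in\mathbb{R}^k$, and therefore $u^\star=(e_{\epsilon+1}^T\otimes I_n)u$; this is statement~1, and statement~2 follows identically with $e_{\epsilon+1}$ replaced by $e_1$ and with $\mathcal{L},\mathcal{F}_{\alpha,\eta,P}$ replaced by their $1$-reversals. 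For a left eigenvector $y$ of $\mathcal{L}(\lambda)$ (so $y^{*}\mathcal{L}(\beta)=0$; the argument is the same for the transpose convention) the reasoning is symmetric: $y^{*}U\,\mathcal{F}_{\alpha,\eta,P}(\beta)=0$ since $V$ is nonsingular, and $U$ being real gives $(U^{T}y)^{*}\mathcal{F}_{\alpha,\eta,P}(\beta)=0$, so $U^{T}y$ is a left eigenvector of $\mathcal{F}_{\alpha,\eta,P}(\lambda)$; as the first $\eta+1$ block rows of $U^{T}$ coincide with those of $I_{kn}$, we get $(e_{\eta+1}^T\otimes I_n)U^{T}=e_{\eta+1}^T\otimes I_n$, whence $y^\star=(e_{\eta+1}^T\otimes I_n)y$ (statement~3), and statement~4 follows in the same way with $e_1$ and the $1$-reversals.

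I do not expect a genuine obstacle here; the only point that needs care is the index bookkeeping — one must verify that the block-row indices $\epsilon+1$, $\eta+1$ and $1$ all lie inside the leading identity blocks of $V$ resp.\ $U^{T}$, which is precisely what makes the extraction maps for $\mathcal{L}(\lambda)$ coincide \emph{verbatim} with those for $\mathcal{F}_{\alpha,\eta,P}(\lambda)$ — together with the observation that the scalar $\alpha$ is absorbed into the $(1,1)$-block of $\mathcal{F}_{\alpha,\eta,P}(\lambda)$ rather than into $U$ or $V$, so that no stray factor of $\alpha$ contaminates the recovered vectors. Everything else is routine.
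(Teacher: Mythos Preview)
Your proof is correct and follows essentially the same route as the paper: factor $\mathcal{L}(\lambda)=U\,\mathcal{F}_{\alpha,\eta,P}(\lambda)\,V$, pass eigenvectors through the constant nonsingular factors, invoke the block Kronecker eigenvector recovery from \cite[Section~7]{DopLPVD16}, and then use the identity-block structure of $V$ (resp.\ $U^{T}$) to see that the extraction maps $(e_{\epsilon+1}^T\otimes I_n)$, $(e_{\eta+1}^T\otimes I_n)$, $(e_1^T\otimes I_n)$ are unaffected. Your treatment is in fact more explicit than the paper's (which only spells out statement~1 and leaves the rest to ``the same reasoning''), and your handling of the scalar $\alpha$---absorbing it into the $(1,1)$-block so that $\mathcal{F}_{\alpha,\eta,P}$ is a block Kronecker pencil for $\alpha P(\lambda)$ with the same eigenvectors as $P(\lambda)$---is cleaner than the paper's somewhat superfluous rescaling $u^\star=\tfrac{1}{\alpha}Vu$.
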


\begin{proof}
Suppose $\mathcal{L}(\lambda) \in \mathbb{G}_{\eta + 1}(P)$ is given as in (\ref{def_blockspace}), i.e.
$\mathcal{L}(\lambda) = U \mathcal{F}_{\alpha, \eta , P}(\lambda) V$ using the notation of (\ref{matrices_UV}) and
(\ref{m_alpha}). Now assume $u \in \mathbb{C}^{kn}\backslash\{0\}$ satisfies $\mathcal{L}(\beta)u=0$ for some
$\beta \in
\mathbb{C}$. Then
$u^\star = \tfrac{1}{\alpha} Vu$ is a right eigenvector of $\mathcal{F}_{1, \eta , P}(\lambda)$ (recall that $U$ is
nonsingular,
i.e. $\textnormal{null}(U) = \emptyset$). Applying \cite[Thm.
7.6]{DopLPVD16} yields that $(e_{\epsilon + 1}^T \otimes I_n)u^\star$ is a right eigenvector of $P(\lambda)$ with
eigenvalue
$\beta$.
Now a closer look reveals $(e_{\epsilon + 1}^T \otimes I_n)u^\star = (e_{\epsilon + 1}^T
\otimes I_n)u$ due to the form of $V$. Thus $P(\beta)(e_{\epsilon + 1}^T \otimes I_n)u = 0$. The remaining
statements follow
by
exactly the same reasoning.
\end{proof}

Next, we provide a comprehensive example on block Kronecker pencils and
their connection to block Kronecker ansatz spaces.

\begin{example}[Block Kronecker Pencils] \ \\
Consider the set of matrix pencils $\mathcal{L}(\lambda)$ having the form (\ref{def_blockspace}) with $\alpha = 1$,
$C_1 = I_{\epsilon n},$ and $C_2 = I_{\eta m}$, i.e.,
\begin{align*}
 \mathcal{L}(\lambda) &= \left[ \begin{array}{c|c}  I_{(\eta + 1)m} & B_1 \\ \hline 0 & I_{\epsilon n} \end{array}
\right]
 \left[ \begin{array}{c|c} \alpha \Sigma_{\eta ,P}(\lambda) &  L_{\eta}(\lambda)^T \otimes I_m \\ \hline
L_{\epsilon}(\lambda) \otimes I_n & 0 \end{array} \right] \left[ \begin{array}{c|c} I_{(\epsilon +
1)n} & 0 \\ \hline B_2 & I_{\eta n} \end{array} \right] \\ &= \left[ \begin{array}{c|c} \Sigma_{\eta ,P}(\lambda) +
B_1(L_{\epsilon}(\lambda) \otimes I_n)
+ (L_{\eta}(\lambda)^T \otimes I_m)B_2 & (L_{\eta}(\lambda)^T \otimes I_m) \\ \hline (L_{\epsilon}(\lambda) \otimes
I_n) & 0
\end{array} \right].
\end{align*}
These matrix pencils coincide with the family of $(\epsilon , n, \eta ,
m)$-block Kro\-necker pencils (\ref{blockKronpencil}) that are strong linearizations for $P(\lambda)$. The strong linearization property was proven in \cite[Thm. 5.2]{DopLPVD16}, which complies with Theorem \ref{thm_lincondition} since in
this case
$\alpha
\neq 0$ and $C_1$ and $C_2$ are nonsingular.
\end{example}
\begin{remark}
For any arbitrary $m \times n$ matrix polynomial $P(\lambda)$, all $(\epsilon , n, \eta , m)$-block Kronecker
pencils are
elements of $\mathbb{G}_{\eta+1}(P)$. They do not form a vector subspace, but an affine subspace of
$\mathbb{G}_{\eta + 1}(P)$.
\end{remark}

It is stated in \cite[Sec. 4.2]{DopLPVD16} that for any Fiedler pencil
$F_{\sigma}(\lambda)$  there exist two permutation matrices $\Pi_1$ and
$\Pi_2$
such that $\Pi_1 F_{\sigma}(\lambda) \Pi_2$ is a block Kronecker pencil. Hence we may argue that
block Kronecker ansatz spaces contain all block Kronecker pencils and - modulo permutations - all Fiedler pencils.
Therefore,
based on \cite{DopLPVD16}, we succeeded in bringing together
Fiedler companion linearizations and ansatz spaces for the first time. In addition to that, it is shown in \cite{DoBPSZ16} that also the families of generalized Fiedler pencils, Fiedler pencils with repetition and generalized Fiedler pencils with repetition are - modulo permutations - elements of the block Kronecker ansatz spaces (introduced in \cite{DoBPSZ16} as the family of extended block Kronecker pencils). So, with rare exceptions, the block Kronecker ansatz spaces provide an extensive concept for the study of families of Fiedler-like pencils in combination with the ansatz space framework for the construction of linearizations known from \cite{MacMMM06}.

Moreover, we were able to make the idea of
ansatz spaces - which is, according to \cite{MacMMM06}, a concept valid for square matrix polynomials only -
 available for rectangular matrix polynomials as well.
However, notice that block Kronecker ansatz spaces contain infinitely many more matrix pencils then just permuted
Fiedler or
block Kronecker pencils. To this, it is a basic fact that every finite dimensional vector space as
$\mathbb{G}_{\eta + 1}(P)$ is
isomorphic to $\mathbb{R}^N$ for some $N \in \mathbb{N}_0$.
Inasmuch as $\mathbb{R}^N$ features a great many of analytical and topological properties, (\ref{def_blockspace})
strongly
suggests to define these concepts for $\mathbb{G}_{\eta + 1}(P)$ in terms of the pre- and postmultiplied
matrices and the
scalar $\alpha$. Taking this point of view, we may argue that the set of $(\epsilon , n,  \eta ,
m)$-block Kronecker pencils  constitutes a connected and nowhere dense subset in $\mathbb{G}_{\eta + 1}(P)$.

\section{Double Block Kronecker Ansatz Spaces $\mathbb{DG}_{\eta + 1}(P)$}\label{sec4}
In this section we characterize matrix pencils that belong to two or more block Kronecker ansatz spaces
simultaneously. Since
this
scheme does hardly seem promising in the case $m \neq n$, we confine ourselves to
square matrix polynomials.

This study is motivated by the double ansatz space $\mathbb{DL}(P)$ (\ref{DLP}). For any regular matrix polynomial
$P(\lambda)$ almost all pencils in $\mathbb{DL}(P)$ are linearizations of
$P(\lambda)$ \cite[Theorem 6.8]{MacMMM06}, while for singular $P(\lambda)$ none is a linearization \cite{DeTDM09}.
Moreover,
any matrix
pencil in $\mathbb{DL}(P)$ is block-symmetric which is in general not true for pencils in double block Kronecker
ansatz spaces.

\begin{definition}[Double Block Kronecker Ansatz Space] \ \\
Let $P(\lambda)$ be an $n \times n$ matrix polynomial of degree $k=\epsilon + \eta + 1$ and assume $\eta \leqslant
\epsilon$.
Then we define
$$
     \mathbb{DG}_{\eta + 1}(P) := \mathbb{G}_{\eta + 1}(P) \cap \mathbb{G}_{k - \eta}(P).
$$
\end{definition}

Given an $n \times n$ matrix polynomial $P(\lambda)$ of degree $k=\epsilon + \eta + 1$,
w.l.o.g. we will always assume $\eta \leqslant \epsilon = k - \eta - 1$ from now. This is reasonable since
     $$ \begin{aligned} \mathbb{DG}_{\eta + 1}(P) &= \mathbb{G}_{\eta + 1}(P) \cap \mathbb{G}_{k - \eta}(P)
      = \mathbb{G}_{k - \epsilon}(P) \cap \mathbb{G}_{\epsilon + 1}(P)
     = \mathbb{DG}_{\epsilon + 1}(P). \end{aligned} $$
Notice further that $\eta + 1 = k - \eta$ implies $k= 2
\eta + 1$. Therefore, the special case $\mathbb{DG}_{\eta + 1}(P) = \mathbb{G}_{\eta + 1} (P) \cap \mathbb{G}_{\eta
+ 1}(P)$ can
only occur for  $P(\lambda)$ having odd degree. Consider the following motivating example.

\begin{example} \label{ex_blockLspace1}
Let $P(\lambda) = \sum_{i=0}^6 P_i \lambda^i$ be an $n \times n$ matrix polynomial of degree
$\textnormal{deg}(P)=6$ and consider the case $\eta = 0$. Then
     \begin{equation} \mathcal{L}(\lambda) = \left[ \begin{array}{c|ccccc} \lambda P_6 + P_5 &
P_4 & P_3 & P_2 & P_1 & P_0 \\
 \hdashline
     P_4 & P_3 - \lambda P_4 & P_2 - \lambda P_3 & P_1 - \lambda P_2 & P_0 - \lambda P_1 & - \lambda P_0 \\
     P_3 & P_2 - \lambda P_3 & P_1 - \lambda P_2 & P_0 - \lambda P_1 & - \lambda P_0 & 0 \\
     P_2 & P_1 - \lambda P_2 & P_0 - \lambda P_1 & - \lambda P_0 & 0 & 0 \\
     P_1 & P_0 - \lambda P_1 & - \lambda P_0 & 0 & 0 & 0 \\
     P_0 & - \lambda P_0 & 0 & 0 & 0 & 0 \end{array} \right] \label{ex_DG} \end{equation}
is an element of $\mathbb{DG}_1(P) = \mathbb{G}_1(P) \cap \mathbb{G}_6(P).$\footnote{A closer look at the
block Kronecker ansatz
equation reveals, that $\mathbb{DG}_{1}(P)$
coincides with the subspace of all matrix pencils having a multiple of $e_1$ as ansatz vector in $\mathbb{DL}(P).$
We restrain
the study of the connection between the classical ansatz spaces $\mathbb{L}_1, \mathbb{L}_2$ and $\mathbb{DL}$ and
our approach
to Section \ref{sec:L1L2}.}
Further, $\mathcal{L}(\lambda)$ is a block-symmetric pencil. Now consider the case $\eta = 1$ and the
matrix pencil
     \begin{equation}
     \mathcal{K}(\lambda) = \left[ \begin{array}{cc|ccc:c} \lambda P_6 + P_5 & P_4 &
     A & 0 & -B & -I_n \\ 0 & P_3 & P_2 - \lambda A
     & P_1 & P_0 + \lambda B & \lambda I_n \\ \hdashline 0 & P_2 & P_1 - \lambda
     P_2 & P_0 - \lambda P_1 & - \lambda P_0 & 0 \\ C & P_1 - \lambda C & P_0 - \lambda P_1 & - \lambda P_0 & 0
     & 0 \\ 0 & P_0 & - \lambda P_0 & 0 & 0 & 0 \\ \hline -I_n & \lambda I_n & 0 & 0 & 0 & 0 \end{array}
     \right] \label{ex_DG2}
     \end{equation}
with arbitrary $n \times n$ matrices $A,B,C$. It is readily checked that $\mathcal{K}(\lambda) \in
\mathbb{DG}_{2}(P)$, i.e.
$\mathcal{K}(\lambda)$ is an element of
$\mathbb{G}_2(P)$ and $\mathbb{G}_5(P)$ simultaneously. Anyhow, it is obvious that $\mathcal{K}(\lambda)$ is
not block-symmetric.
\end{example}

Example \ref{ex_blockLspace1} shows that double block Kronecker ansatz spaces $\mathbb{DG}_{\eta + 1}(P)$ need not
contain
exclusively block-symmetric pencils. Albeit, they are never empty and the following theorem gives a comprehensive
characterization
of these spaces. To this end, we introduce a truncated square version of $\Sigma_{\eta,P}(\lambda)$, namely
$$ \Sigma_{\eta,P}^{\mathbb{DG}}(\lambda) =
\begin{bmatrix} \lambda P_k + P_{k-1} & P_{k-2} &\cdots & P_{\epsilon} \\
& & & \vdots \\
& & & P_{\epsilon - \eta} \end{bmatrix}
\in \mathbb{R}[\lambda]^{(\eta + 1)n \times (\eta + 1)n} $$
and set
$ \Pi_{\eta , P}^{\mathbb{DG}}(\lambda) := \big[ \, \Sigma_{\eta , P}^{\mathbb{DG}}(\lambda) \;
\mathcal{R}_{\eta,P} \,
\big] \in \mathbb{R}[\lambda]^{(\eta + 1)n \times (\epsilon + 1)n}$ with \begin{equation} \mathcal{R}_{\eta,P} =
\left[\begin{array}{c} 0_{\eta n \times (\epsilon -
\eta)n} \\
\hline \begin{array}{ccc} P_{\epsilon - \eta - 1} & \cdots & P_0 \end{array} \end{array} \right] \in
\mathbb{R}^{(\eta + 1)n
\times (\epsilon - \eta)n}. \label{Rmatrix} \end{equation}
Moreover, for $\epsilon \geqslant \eta$ we define the block Hankel matrix
$$
\mathcal{H}_{\epsilon - \eta}(P) = \begin{bmatrix} -P_{\epsilon - \eta -1} & \cdots & -P_1
& -P_0 \\ \vdots & \iddots & \iddots & \\[0.15cm] -P_1 & -P_0 & & \\[0.21cm]
-P_0 & & & \end{bmatrix} \in \mathbb{R}^{(\epsilon - \eta)n \times (\epsilon - \eta)n}.
$$
Notice that this block Hankel structure already showed up in the construction of block-symmetric linearizations in
\cite{HigMMT06}. We obtain
the following theorem.

\begin{theorem}[Characterization of $\mathbb{DG}_{\eta + 1}(P)$] \label{thm_char_DG} \ \\
Let $P(\lambda)$ be an $n \times n$ matrix polynomial of degree $k = \epsilon + \eta + 1$ and assume $\eta
\leqslant \epsilon$.
Then $\mathbb{DG}_{\eta + 1}(P)$ is a vector space over $\mathbb{R}$ having dimension
     \begin{equation}
     \textnormal{dim} \big( \mathbb{DG}_{\eta + 1}(P) \big) = 2k \eta n^2 + 1. \label{dimformula_DG}
     \end{equation}
Any matrix pencil $\mathcal{L}(\lambda) \in \mathbb{DG}_{\eta + 1}(P)$ may be characterized as
     \begin{equation}  \mathcal{L}(\lambda)= \left[ \begin{array}{c|c|c} I_{(\eta + 1)n} &  B_{11} & 0_{(\eta + 1)
     \times (\epsilon - \eta)n}  \\[0.1cm]  \hline 0 &  C_{11} & \alpha \mathcal{H}_{\epsilon - \eta}(P) \\ \hline 0 &
C_{21} &
     0_{\eta n \times (\epsilon - \eta )n} \end{array} \right]  \left[ \begin{array}{c|c} \alpha \Pi_{\eta ,
P}^{\mathbb{DG}}(\lambda)
     & L_{\eta}^T \otimes I_n \\ \hline L_{\epsilon} \otimes I_n & 0 \end{array} \right] \left[ \begin{array}{c|c}
I_{(\epsilon +
     1)n} & 0 \\[0.1cm] \hline B_{2} & C_{2}
     \end{array} \right] \label{doubleansatz_expr1}
     \end{equation}
with some $\alpha \in \mathbb{R}$ and some matrices $B_{11} \in \mathbb{R}^{(\eta + 1)n \times \eta n}$, $C_{11} \in
\mathbb{R}^{(\epsilon - \eta)n \times
\eta n}$, $C_{21} \in \mathbb{R}^{\eta n \times \eta n}$, $B_2 \in \mathbb{R}^{\eta n \times (\epsilon + 1)n}$ and
$C_2 \in
\mathbb{R}^{\eta n \times \eta n}$. Moreover, $\mathbb{DG}_{\eta + 1}(P)$ is a proper subspace of both
$\mathbb{G}_{\eta + 1}(P)$
and $\mathbb{G}_{k - \eta}(P)$.
\end{theorem}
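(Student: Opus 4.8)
The plan is to derive the characterization \eqref{doubleansatz_expr1} by intersecting the two known descriptions of $\mathbb{G}_{\eta+1}(P)$ and $\mathbb{G}_{k-\eta}(P)$ coming from Theorem~\ref{thm_generalspace}. First I would fix the generic element $\mathcal{L}(\lambda)\in\mathbb{G}_{\eta+1}(P)$ in its natural partition \eqref{def_blockspace_explicit}, so that $\mathcal{L}(\lambda)$ is determined by a scalar $\alpha$ and matrices $B_1,B_2,C_1,C_2$ of the prescribed sizes, with leading block $\alpha\Sigma_{\eta,P}(\lambda)+B_1(L_\epsilon\otimes I_n)+(L_\eta^T\otimes I_n)B_2$. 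Membership in $\mathbb{G}_{k-\eta}(P)$ means the very same pencil also satisfies the block Kronecker ansatz equation \eqref{ansatzequation1} for the parameter pair $(\tilde\epsilon,\tilde\eta)=(\eta,\epsilon)$ (note $k-\eta=\tilde\eta+1$ with $\tilde\eta=\epsilon$). The crucial observation is that these two ansatz equations have \emph{different} block sizes: the $((\eta+1)n+\epsilon n)\times((\epsilon+1)n+\eta n)$ pencil is the same, but the roles of the ``$L$-tails'' and the polynomial block are swapped. So I would write down the second ansatz equation explicitly in the $2\times 2$-partition induced by $(\tilde\epsilon,\tilde\eta)=(\eta,\epsilon)$ — i.e.\ the one with leading block of size $(\epsilon+1)n\times(\eta+1)n$ — and translate it, via Lemma~\ref{lem_nullspace} and its block-transpose, into constraints on $\alpha,B_1,B_2,C_1,C_2$.

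Next I would carry out the bookkeeping that turns those constraints into the factored form \eqref{doubleansatz_expr1}. Comparing the two partitions forces the off-diagonal $L$-blocks to be consistent, which pins down part of $C_1$ and part of $B_1$ and reveals that the genuinely free part of $C_1$ is a block of size $(\epsilon-\eta)n\times\eta n$ (the matrix called $C_{11}$) together with the $\eta n\times\eta n$ block $C_{21}$; the block $B_1$ collapses to its free part $B_{11}\in\mathbb{R}^{(\eta+1)n\times\eta n}$ padded by a zero block of width $(\epsilon-\eta)n$, exactly as displayed in the left factor of \eqref{doubleansatz_expr1}. The middle factor is $\mathcal{F}_{\alpha,\eta,P}$ with $\Sigma_{\eta,P}$ replaced by the ``doubly truncated'' version $\Pi_{\eta,P}^{\mathbb{DG}}=[\,\Sigma_{\eta,P}^{\mathbb{DG}}\;\; \mathcal{R}_{\eta,P}\,]$; the reason the lower tail of $\Sigma_{\eta,P}$ disappears into the right-hand factor is precisely that the second ansatz equation demands the pencil also have a clean $L_\epsilon$-tail on the other side, which is only possible if the coefficients $P_{\epsilon-\eta-1},\dots,P_0$ are relegated either to $\mathcal{R}_{\eta,P}$ or to the block Hankel matrix $\mathcal{H}_{\epsilon-\eta}(P)$ sitting in the $(2,2)$-block of the left factor — this is where the Hankel structure from \cite{HigMMT06} is forced upon us. I would verify by direct multiplication that the displayed product indeed lies in both $\mathbb{G}_{\eta+1}(P)$ and $\mathbb{G}_{k-\eta}(P)$, using $L_\kappa\Lambda_\kappa=0$ repeatedly, so that \eqref{doubleansatz_expr1} is both necessary and sufficient.

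The dimension count \eqref{dimformula_DG} then follows by adding up the free parameters in \eqref{doubleansatz_expr1}, after checking that the parametrization is injective (as in the remark following Theorem~\ref{thm_generalspace}, injectivity reduces to the shapes of $L_\epsilon$ and $L_\eta^T$). The free matrices contribute $\dim B_{11}+\dim C_{11}+\dim C_{21}+\dim B_2+\dim C_2 = (\eta+1)\eta n^2+(\epsilon-\eta)\eta n^2+\eta^2 n^2+\eta(\epsilon+1)n^2+\eta^2 n^2$; collecting terms and using $\epsilon+\eta+1=k$ this should collapse to $2k\eta n^2$, and the $+1$ is the scalar $\alpha$. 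Finally, ``proper subspace'' is clear once the dimension is in hand: $2k\eta n^2+1$ is strictly smaller than $\dim\mathbb{G}_{\eta+1}(P)=(\epsilon n+\eta n)^2+(\epsilon+\eta)n^2+1$ whenever $\eta<k$ (equivalently $\epsilon\ge 0$), and symmetrically for $\mathbb{G}_{k-\eta}(P)$; alternatively one can exhibit an explicit pencil in $\mathbb{G}_{\eta+1}(P)\setminus\mathbb{G}_{k-\eta}(P)$ using Example~\ref{ex_Gspace}.

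I expect the main obstacle to be the \emph{alignment of the two block partitions}: the same constant and linear coefficient matrices $X,Y$ of $\mathcal{L}(\lambda)$ must simultaneously be readable as \eqref{def_blockspace_explicit} and as the corresponding expression with $\eta$ and $\epsilon$ interchanged, and reconciling the overlapping index ranges of the polynomial coefficients $P_i$ — deciding which $P_i$ ends up in $\Sigma_{\eta,P}^{\mathbb{DG}}$, which in $\mathcal{R}_{\eta,P}$, and which in $\mathcal{H}_{\epsilon-\eta}(P)$ — is the delicate combinatorial heart of the argument. Everything else (linearity, the homomorphism-theorem style dimension bookkeeping, verifying membership by multiplying out) is routine given Theorem~\ref{thm_generalspace} and Lemma~\ref{lem_nullspace}.
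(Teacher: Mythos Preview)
Your plan is correct and follows essentially the same route as the paper: intersect the two descriptions from Theorem~\ref{thm_generalspace}, use Lemma~\ref{lem_nullspace} on the off-diagonal blocks, and identify the forced structure of the overlapping region. The paper organizes this via an explicit $3\times 3$ block partition of $\mathcal{L}(\lambda)$ (the overlay of the two natural $2\times 2$ partitions), which makes the ``alignment of the two block partitions'' you anticipate quite concrete: the outer blocks $(2,3),(3,2),(3,3)$ are forced to vanish, the corners $(1,3),(3,1)$ are read off directly, and the middle $(\epsilon-\eta)n\times(\epsilon-\eta)n$ ``core part'' is shown to be \emph{uniquely determined} (up to the scalar $\alpha$) by computing the residual $\Phi(\Delta\mathcal{L}^\star_{11})=\alpha\sum_{i=0}^{\epsilon-\eta-1}P_i\lambda^i$ after subtracting the contribution of $\Sigma_{\eta,P}^{\mathbb{DG}}$---this is precisely the mechanism that forces $\mathcal{H}_{\epsilon-\eta}(P)$, and it is worth making explicit rather than leaving as ``bookkeeping''. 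Your dimension count and the proper-subspace remark are fine.
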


\begin{proof}
Assume $P(\lambda)$ to be an $n \times n$ matrix polynomial of degree $k=\epsilon +
\eta + 1$ with $\eta \leqslant \epsilon$ and $\mathcal{L}^\star(\lambda)$ to be a $kn \times kn$
matrix pencil in $\mathbb{DG}_{\eta + 1}(P)$. Now consider $\mathcal{L}^\star(\lambda)$ partitioned as a $3\times 3 $
block
matrix as indicated in Figure \ref{fig1}
\begin{figure}[h]
\begin{center}
 \begin{tikzpicture}[scale=0.3]
  \draw[help lines, line width=0mm, color=black!10!white] (0,0) grid (12,12);
  \draw[line width=0.4mm] (0,0) -- (12,0) -- (12,12) -- (0,12) -- cycle;

  \draw[line width=0.4mm] (0,3) -- (12,3);
  \draw[line width=0.4mm] (9,0) -- (9,12);

  \draw[line width=0.4mm] (0,8) -- (12,8);

  \draw[line width=0.4mm] (4,3) -- (4,8);

  \draw[line width=0.4mm] (4,8) -- (4,12);
  \draw[line width=0.4mm] (4,0) -- (4,3);

\node[align=center] at (2,1.5)
{$(3,1)$};
\node[align=center] at (10.5,10)
{$(1,3)$};
\node[align=center] at (2,10) {$(1,1)$};

\node[align=center] at (6.5,1.5) {$(3,2)$};
\node[align=center] at (10.5,5.5) {$(2,3)$};
\node[align=center] at (10.5,1.5) {$(3,3)$};
\node[anchor=east] at (-0.3, 5.5) {$\mathcal{L}^\star(\lambda)=$};
\node[align=center] at (2,5.5) {$(2,1)$};
\node[align=center] at (6.5,10) {$(1,2)$};
\node[align=center] at (6.5,5.5) {$C(\mathcal{L}^\star)$};
\fill[color=white, opacity=0.2] (0,0) rectangle (4,3);
\fill[color=white, opacity=0.2] (0,12) rectangle (4,8);
\fill[color=white, opacity=0.2] (9,8) rectangle (12,12);
\fill[color=white, opacity=0.2] (0,3) rectangle (3,8);
\fill[color=white, opacity=0.2] (4,8) rectangle (9,12);
\fill[color=white, opacity=0.2] (3,3) rectangle (4,8);
\fill[color=black!60!white, opacity=0.2] (4,3) rectangle (9,0);
\fill[color=black!60!white, opacity=0.2] (9,0) rectangle (12,8);

\draw[dotted] (0.1,-0.3) -- ++(0,-0.3) -- ++(3.8,0) -- ++(0,0.3);
\draw[dotted] (4.1,-0.3) -- ++(0,-0.3) -- ++(4.8,0) -- ++(0,0.3);
\draw[dotted] (9.1,-0.3) -- ++(0,-0.3) -- ++(2.8,0) -- ++(0,0.3);

\draw[dotted] (12.3,0.1) -- ++(0.3,0) -- ++(0,2.8) -- ++(-0.3,0);
\draw[dotted] (12.3,3.1) -- ++(0.3,0) -- ++(0,4.8) -- ++(-0.3,0);
\draw[dotted] (12.3,8.1) -- ++(0.3,0) -- ++(0,3.8) -- ++(-0.3,0);

\node[anchor=north] (a) at (2,-0.7) {\footnotesize{$(\eta + 1) n$}};
\node[anchor=north] (b) at (6.5,-0.7) {\footnotesize{$(\epsilon - \eta)n$}};
\node[anchor=north] (c) at (10.5,-1.1) {\footnotesize{$\eta n$}};

\node[anchor=west] (d) at (12.6, 1.5) {\footnotesize{$\eta n$}};
\node[anchor=west] (e) at (12.5, 5.5) {\footnotesize{$(\epsilon - \eta)n$}};
\node[anchor=west] (e) at (12.5, 10) {\footnotesize{$(\eta +1) n$}};

\end{tikzpicture}
\end{center}
\caption{$\mathcal{L}^\star(\lambda)$ in
its natural $3 \times 3$ partitioning. This partitioning may be interpreted as the overlay of the natural
partitionings of
elements in $\mathbb{DG}_{\eta + 1}(P)$ and $\mathbb{DG}_{k - \eta}(P)$. }
\label{fig1}
\end{figure}
as well as in its natural partitioning as a matrix pencil in $\mathbb{G}_{\eta+1}(P)$ in (\ref{ansatzequation2}).
The upper-left block $\mathcal{L}^\star_{11}(\lambda)$ is rectangular of size $(\eta+1)n\times (\epsilon+1)n,$
this corresponds to the $(1,1)$ and the $(1,2)$ blocks in the $3\times 3$ partitioning in Figure \ref{fig1}.
Clearly, the $(1,3)$ block corresponds to $\mathcal{L}^\star_{12}(\lambda),$ that is equal
to $(L_\eta(\lambda)^T\otimes I_n)C_2$ for a matrix $C_2 \in \mathbb{R}^{\eta n \times \eta n}$.
Moreover, from (\ref{def_blockspace_explicit}) it is obvious that $\mathcal{L}^\star_{22}(\lambda) \in
\mathbb{R}^{\epsilon n \times \eta n}$ is zero, thus the blocks $(2,3)$ and $(3,3)$ in Figure \ref{fig1} are zero.
Now consider $\mathcal{L}^\star(\lambda)$ in its natural partitioning as a matrix pencil in $\mathbb{G}_{k-\eta}(P).$
Then the block $\mathcal{L}^\star_{11}(\lambda)$ is rectangular of size $(\epsilon+1)n\times (\eta+1)n,$
this corresponds to the $(1,1)$ and the $(2,1)$ blocks in the $3\times 3$ partitioning in Figure \ref{fig1}.
Obviously, the $(3,1)$ block corresponds here to $\mathcal{L}^\star_{21}(\lambda),$ which is given as
$C_{21}(L_\eta(\lambda)\otimes I_n)$ for a matrix $C_{21} \in \mathbb{R}^{\eta n \times \eta n}.$
As before, the block $\mathcal{L}^\star_{22}(\lambda) \in  \mathbb{R}^{\eta n \times \epsilon n}$ is zero,
hence the blocks $(3,2)$ and $(3,3)$ in Figure \ref{fig1} are zero.
Thus, the fact of $\mathcal{L}^\star(\lambda)$ being an element of $\mathbb{G}_{\eta + 1}(P)$
and  of $\mathbb{G}_{k - \eta}(P)$ a priori implies the unalterable zero structure of $\mathcal{L}^\star(\lambda)$
in the blocks $(2,3), (3,2)$ and $(3,3)$ of the $3\times 3$ partitioning as indicated in grey in Figure \ref{fig1}.
In summary, we have identified all of the eight bordering blocks in Figure \ref{fig1}.
The remaining $(2,2)$-block in the $3\times 3$ partitioning, termed \enquote{core part} $C(
\mathcal{L}^\star)$ of $\mathcal{L}^\star(\lambda)$ in the following, is square of size
$(\epsilon-\eta)n\times (\epsilon-\eta)n.$ Our next step is to construct a pencil $\mathcal{L}(\lambda)$ of the
form
\[
\mathcal{L}(\lambda) = \left[ \begin{array}{c|c|c} I_{(\eta + 1)n} &
B_{11} & 0  \\[0.1cm]  \hline 0 & C_{11} & C_{12} \\ \hline 0 & C_{21} &
0  \end{array} \right]  \left[ \begin{array}{c|c} \alpha [\, \Sigma_{\eta, P}^{\mathbb{DG}}(\lambda) \; \, 0 \, ]
& L_{\eta}^T \otimes I_n
\\ \hline L_{\epsilon} \otimes I_n & 0 \end{array} \right]
\left[ \begin{array}{c|c} I_{(\epsilon + 1)n} & 0 \\ \hline
B_2 & C_2
\end{array} \right],
\]
such that the bordering blocks in $\Delta \mathcal{L}^\star(\lambda) := \mathcal{L}^\star(\lambda) -
\mathcal{L}(\lambda)$ get almost entirely eliminated. In fact, we may achieve that $\Delta
\mathcal{L}^\star(\lambda)$ has the
form indicated in Figure \ref{fig2} by making the appropriate choices of $C_{21}, C_2 \in
\mathbb{R}^{\eta n \times \eta n}$ as described above and finding suitable matrices $B_{11} \in \mathbb{R}^{(\eta +
1)n \times
\eta n}, C_{11} \in
\mathbb{R}^{(\epsilon - \eta)n \times \eta n},$ $C_{12} \in \mathbb{R}^{(\epsilon-\eta)n \times (\epsilon-\eta)n}$
and
$B_2 \in \mathbb{R}^{\eta n \times (\epsilon + 1)n}.$
That the core part of $\Delta \mathcal{L}^\star(\lambda)$  is equal to the core part $C(\mathcal{L}^\star)$ of
$\mathcal{L}^\star(\lambda)$ is achieved by setting the $C_{12}$-block of $\mathcal{L}(\lambda)$ as
$0_{(\epsilon-\eta)n}.$
Furthermore, there is a leftover matrix $h^\star \in
\mathbb{R}^{n \times (\epsilon - \eta)n}$ in the block (1,2) that can not be eliminated by $B_2$.

\begin{figure}[h]
\begin{center}
 \begin{tikzpicture}[scale=0.3]
  \draw[help lines, line width=0mm, color=black!10!white] (0,0) grid (12,12);
  \draw[line width=0.4mm] (0,0) -- (12,0) -- (12,12) -- (0,12) -- cycle;
  \draw[line width=0.4mm] (4,9) -- ++(5,0);
  \draw[line width=0.4mm] (0,3) -- (12,3);
  \draw[line width=0.4mm] (9,0) -- (9,12);

  \draw[line width=0.4mm] (0,8) -- (12,8);

  \draw[line width=0.4mm] (4,3) -- (4,8);

  \draw[line width=0.4mm] (4,8) -- (4,12);
  \draw[line width=0.4mm] (4,0) -- (4,3);

\node[align=center] at (2,1.5)
{\Large{$0$}};
\node[align=center] at (10.5,10)
{\Large{$0$}};
\node[align=center] at (2,10) {\Large{$0$}};

\node[align=center] at (6.5,1.5) {\Large{$0$}};
\node[align=center] at (10.5,5.5) {\Large{$0$}};
\node[align=center] at (10.5,1.5) {\Large{$0$}};
\node[anchor=east] at (-0.3, 5.5) {$\Delta \mathcal{L}^\star(\lambda)=$};
\node[align=center] at (2,5.5) {\Large{$0$}};
\node[align=center] at (6.5,10) {\Large{$0$}};
\node[align=center] at (6.5,5.5) {$C(\mathcal{L}^\star)$};
\fill[color=white, opacity=0.2] (0,0) rectangle (4,3);
\fill[color=white, opacity=0.2] (0,12) rectangle (4,8);
\fill[color=white, opacity=0.2] (9,8) rectangle (12,12);
\fill[color=white, opacity=0.2] (0,3) rectangle (3,8);
\fill[color=white, opacity=0.2] (4,9) rectangle (9,12);
\fill[color=black!60!white, opacity=0.2] (4,8) rectangle (9,9);
\node at (6.5,8.5) {\footnotesize{$h^\star$}};
\fill[color=white, opacity=0.2] (3,3) rectangle (4,8);
\fill[color=white, opacity=0.2] (4,3) rectangle (9,0);
\fill[color=white, opacity=0.2] (9,0) rectangle (12,8);

\draw[dotted] (0.1,-0.3) -- ++(0,-0.3) -- ++(3.8,0) -- ++(0,0.3);
\draw[dotted] (4.1,-0.3) -- ++(0,-0.3) -- ++(4.8,0) -- ++(0,0.3);
\draw[dotted] (9.1,-0.3) -- ++(0,-0.3) -- ++(2.8,0) -- ++(0,0.3);

\draw[dotted] (12.3,0.1) -- ++(0.3,0) -- ++(0,2.8) -- ++(-0.3,0);
\draw[dotted] (12.3,3.1) -- ++(0.3,0) -- ++(0,4.8) -- ++(-0.3,0);
\draw[dotted] (12.3,8.1) -- ++(0.3,0) -- ++(0,3.8) -- ++(-0.3,0);

\node[anchor=north] (a) at (2,-0.7) {\footnotesize{$(\eta + 1) n$}};
\node[anchor=north] (b) at (6.5,-0.7) {\footnotesize{$(\epsilon - \eta)n$}};
\node[anchor=north] (c) at (10.5,-0.7) {\footnotesize{$\eta n$}};

\node[anchor=west] (d) at (12.5, 1.5) {\footnotesize{$\eta n$}};
\node[anchor=west] (e) at (12.5, 5.5) {\footnotesize{$(\epsilon - \eta)n$}};
\node[anchor=west] (e) at (12.5, 10) {\footnotesize{$(\eta +1) n$}};
\fill[color=black!60!white, opacity=0.2] (4,0) -- ++(0,3) -- ++(5,0) -- ++(0,5) -- ++(3,0) -- ++(0,-8) -- cycle;

\end{tikzpicture}
\end{center}
\caption{$\Delta \mathcal{L}^\star(\lambda)$ in
its natural $3 \times 3$ partition.}
\label{fig2}
\end{figure}

Now consider the natural $2 \times 2$ block partition of $\Delta \mathcal{L}^\star(\lambda)$ as an element of
$\mathbb{G}_{\eta+1}(P)$ and in particular
$\Delta \mathcal{L}^\star_{11}(\lambda)$ (which corresponds to the $(1,1)$ and $(1,2)$ block in Figure \ref{fig2}).
Due to the
linearity of $\Phi$ we have
$$ \begin{aligned} \Phi\big( \Delta \mathcal{L}^\star_{11}(\lambda) \big)
&= \Phi( \mathcal{L}^\star_{11}(\lambda)) - \alpha  \Phi([\, \Sigma_{\eta, P}^{\mathbb{DG}}(\lambda) \;
0_{(\eta + 1)n \times (\epsilon
-\eta)n} \, ]) \\
&= \alpha P(\lambda) - \alpha \bigg( \sum_{i= \epsilon - \eta}^k P_i \lambda^i \bigg)
= \alpha \sum_{i=0}^{\epsilon - \eta - 1} P_{i} \lambda^{i}.
\end{aligned} $$
Considering once again Figure \ref{fig2}, this immediately implies $$h^\star = [ \, \alpha
P_{\epsilon - \eta - 1} \; \cdots \; \alpha P_0 \,].$$ Therefore, if we had chosen $\alpha \Pi_{\eta,
P}^{\mathbb{DG}}(\lambda)$
instead of $\alpha [\, \Sigma_{\eta, P}^{\mathbb{DG}}(\lambda) \; 0_{(\eta + 1)n \times (\epsilon + 1)n} \, ]$,
$h^\star$
would have also been deleted in $\Delta \mathcal{L}^\star(\lambda)$ as desired.

Now, since the $(\epsilon - \eta)n
\times (\epsilon - \eta)n$ core part $C(\mathcal{L}^\star)$ of $\mathcal{L}^\star(\lambda)$ has to be reproducible
in
both block Kronecker ansatz spaces, the choice $h^\star = [ \, \alpha P_{\epsilon - \eta - 1} \; \cdots \; \alpha
P_0 \,]$
unexpectedly
determines $C(\mathcal{L}^\star)$ completely. The unique possible form for $C( \mathcal{L}^\star)$ is
$$
C( \mathcal{L}^\star) = \alpha \begin{bmatrix}
P_{\mu - 1} -\lambda P_{\mu} & P_{\mu - 2} - \lambda P_{\mu - 1} &  \cdots & P_0 - \lambda P_1 & - \lambda
P_0 \\ P_{\mu - 2} - \lambda P_{\mu - 1} & P_{\mu - 3} - \lambda P_{\mu - 2}  & \iddots &
\iddots & \\ \vdots & \iddots  & \iddots  &  & \\ P_0 - \lambda P_1 & - \lambda P_0 & & & \\ - \lambda P_0 &
& & &
\end{bmatrix}
$$
where we have set $\mu := \epsilon - \eta - 1$ for abbreviation. Exactly this matrix pencil is obtained by setting
$C_{12} =
\alpha \mathcal{H}_{\epsilon -
\eta}(P)$. Therefore, we have shown that $\mathcal{L}^\star(\lambda)$ may be expressed as
     $$ \mathcal{L}^\star(\lambda)=\left[ \begin{array}{c|c|c} I_{(\eta + 1)n} & B_{11} & 0 \\  \hline 0 & C_{11} &
\alpha
\mathcal{H}_{\epsilon - \eta}(P) \\ \hline 0 & C_{21} & 0  \end{array} \right]  \left[
     \begin{array}{c|c} \alpha \Pi_{\eta , P}^{\mathbb{DG}}(\lambda) & L_{\eta}^T \otimes I_n \\ \hline
L_{\epsilon} \otimes I_n
     & 0 \end{array} \right] \left[ \begin{array}{c|c} I_{(\epsilon + 1)n} & 0 \\ \hline B_2 & C_2 \end{array}
\right] $$
which proves the statement. \end{proof}

\begin{corollary}[Non-Emptiness of $\mathbb{DG}_{\eta + 1}(P)$] \label{master2} \ \\
Let $P(\lambda)$ be an $n \times n$ matrix polynomial of degree $k= \eta + \epsilon + 1$ and assume $\eta \leqslant
\epsilon$.
Then
$$
     \mathbb{DG}_{\eta + 1}(P) \neq \emptyset.
$$
\end{corollary}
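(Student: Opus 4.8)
The statement is an immediate consequence of the characterization just established, so the plan is short. First I would recall that, by definition, $\mathbb{DG}_{\eta+1}(P)=\mathbb{G}_{\eta+1}(P)\cap\mathbb{G}_{k-\eta}(P)$; since $P(\lambda)$ is square ($m=n$), both $\mathbb{G}_{\eta+1}(P)$ and $\mathbb{G}_{k-\eta}(P)$ are $\mathbb{R}$-subspaces of the common ambient space $\mathbb{R}_1[\lambda]^{kn\times kn}$ by Lemma \ref{lem_vectorspace} and Theorem \ref{thm_generalspace} (the sizes match because $(\eta+1)n+\epsilon n = kn = (\epsilon+1)n+\eta n$). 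An intersection of subspaces is again a subspace, hence it contains the zero pencil, which already proves $\mathbb{DG}_{\eta+1}(P)\neq\emptyset$. Even more directly, Theorem \ref{thm_char_DG} asserts that $\mathbb{DG}_{\eta+1}(P)$ is a vector space of dimension $2k\eta n^2+1\geq 1$, and a vector space is in particular nonempty.

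If an explicit witness is preferred over the abstract argument, the characterization (\ref{doubleansatz_expr1}) supplies one at once: choosing $\alpha=1$, $C_{21}=C_2=I_{\eta n}$ and $B_{11}=C_{11}=B_2=0$ produces a concrete $kn\times kn$ pencil that, by Theorem \ref{thm_char_DG}, lies simultaneously in $\mathbb{G}_{\eta+1}(P)$ and in $\mathbb{G}_{k-\eta}(P)$. For $k=6$ the pencils displayed in Example \ref{ex_blockLspace1}, namely (\ref{ex_DG}) and (\ref{ex_DG2}), are such elements, and membership can be verified directly from the block Kronecker ansatz equation without genuine computation.

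There is essentially no obstacle here: all of the substance --- exhibiting a pencil that is reproducible in \emph{both} natural partitionings and showing that its core part is then uniquely forced --- has already been carried out in the proof of Theorem \ref{thm_char_DG}. The corollary merely records the consequence that this intersection, which a priori could have been over-determined, is in fact never trivial; by (\ref{dimformula_DG}) it is a large vector space, and for $\eta\geq 1$ it moreover contains non-block-symmetric pencils such as (\ref{ex_DG2}), in contrast to $\mathbb{DL}(P)$, all of whose elements are block-symmetric.
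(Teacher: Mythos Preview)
Your proposal is correct and matches the paper's approach: the corollary is stated without proof precisely because it is immediate from Theorem \ref{thm_char_DG}, and your argument (intersection of subspaces is a subspace, or invoke the dimension formula (\ref{dimformula_DG}), or exhibit the explicit pencil from (\ref{doubleansatz_expr1})) is exactly the intended justification.
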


Recall the first case considered in Example \ref{ex_blockLspace1}. Note that Theorem \ref{thm_char_DG} shows that
$\mathcal{L}(\lambda)$ as in (\ref{ex_DG}) is - modulo scalar multiplication -
the only matrix pencil in $\mathbb{DG}_1(P)$ since we have $\textnormal{dim}(
\mathbb{DG}_1(P)) = 1$ according to (\ref{dimformula_DG}). Thus $\mathbb{DG}_{1}(P)$ consists entirely of
block-symmetric
pencils.\footnote{This is not surprising since $\mathbb{DG}_1(P)$ coincides with the subset of matrix pencils
having a multiple of $e_1$ as ansatz vector in $\mathbb{DL}(P)$. The vector space $\mathbb{DL}(P)$ contains
entirely
block-symmetric pencils. This was shown in \cite{HigMMT06}.} Regarding linearizations, the following fact can
immediately be
derived
from Theorem \ref{thm_lincondition} and Theorem
\ref{thm_master1} (see also Remark \ref{rem_linequiv}).

\begin{theorem}[Linearization Condition for $\mathbb{DG}_{\eta + 1}(P)$] \label{master4} \ \\
Let $P(\lambda)$ be a square and regular matrix polynomial of degree $k= \eta + \epsilon + 1$.
Let $\mathcal{L}(\lambda) \in \mathbb{DG}_{\eta + 1}(P)$  be given in the form (\ref{doubleansatz_expr1}).
Assume $\epsilon \neq \eta$. Then the
following statements are equivalent:
\begin{enumerate}
 \item $\mathcal{L}(\lambda)$ is a strong linearization for $P(\lambda)$.
 \item $P_0 \in \textnormal{GL}_n( \mathbb{R}), C_{21} \in \textnormal{GL}_{\eta n}(\mathbb{R}), C_2 \in
\textnormal{GL}_{\eta n}(\mathbb{R})$ and $\alpha \in \mathbb{R} \setminus \lbrace 0 \rbrace$.
\end{enumerate}
\end{theorem}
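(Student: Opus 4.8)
The plan is to read the statement off the factorization (\ref{doubleansatz_expr1}) together with the Strong Linearization Theorem \ref{thm_master1}, reducing everything to the evaluation of two \emph{constant} block determinants. Since $\mathbb{DG}_{\eta + 1}(P)\subseteq\mathbb{G}_{\eta + 1}(P)$ and $P(\lambda)$ is regular, Theorem \ref{thm_master1} tells us that $\mathcal{L}(\lambda)$ is a strong linearization for $P(\lambda)$ if and only if $\mathcal{L}(\lambda)$ is a regular pencil. Writing $\mathcal{L}(\lambda)=U_{\mathbb{DG}}\,\mathcal{G}(\lambda)\,V$ for the three factors in (\ref{doubleansatz_expr1}), where $U_{\mathbb{DG}}$ and $V$ are constant and $\mathcal{G}(\lambda)$ is the middle factor built from $\alpha\Pi_{\eta,P}^{\mathbb{DG}}(\lambda)$ and the $L_{\kappa}$ blocks, we get $\det\mathcal{L}(\lambda)=\det(U_{\mathbb{DG}})\det(\mathcal{G}(\lambda))\det(V)$ (all three are square of size $kn$). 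This polynomial is not identically zero exactly when $\det U_{\mathbb{DG}}\neq 0$, $\det V\neq 0$ and $\det\mathcal{G}(\lambda)\not\equiv 0$ hold simultaneously.

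For the middle factor, recall from the proof of Theorem \ref{thm_char_DG} that $\Phi(\Pi_{\eta,P}^{\mathbb{DG}}(\lambda))=P(\lambda)$, so $\mathcal{G}(\lambda)$ is a block Kronecker pencil of the form (\ref{blockKronpencil}) associated to the matrix pencil $\alpha\Pi_{\eta,P}^{\mathbb{DG}}(\lambda)$. If $\alpha\neq 0$, then by \cite[Thm.~5.2]{DopLPVD16} $\mathcal{G}(\lambda)$ is a strong linearization for $\alpha P(\lambda)$, hence $\det\mathcal{G}(\lambda)$ equals a nonzero scalar multiple of $\det(\alpha P(\lambda))=\alpha^{n}\det P(\lambda)$, which is not identically zero because $P$ is regular. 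If $\alpha=0$, then the rational vectors $\Lambda_{\epsilon}(\lambda)\otimes e_i$ lie in the kernel of $\mathcal{G}(\lambda)$ (since $(L_{\epsilon}\otimes I_n)(\Lambda_{\epsilon}\otimes I_n)=0$), so $\mathcal{G}(\lambda)$ is singular, exactly as in the proof of Theorem \ref{thm_master1}. Hence $\det\mathcal{G}(\lambda)\not\equiv 0$ if and only if $\alpha\neq 0$.

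It remains to evaluate the two constant determinants. The matrix $V$ in (\ref{doubleansatz_expr1}) is block lower triangular with diagonal blocks $I_{(\epsilon + 1)n}$ and $C_2$, so $V$ is nonsingular precisely when $C_2\in\textnormal{GL}_{\eta n}(\mathbb{R})$. The matrix $U_{\mathbb{DG}}$ is block upper triangular with respect to the split $(\eta + 1)n \mid (\epsilon-\eta)n+\eta n$, its leading block being $I_{(\eta + 1)n}$, whence $\det U_{\mathbb{DG}}=\det\bigl[\begin{smallmatrix} C_{11} & \alpha\mathcal{H}_{\epsilon-\eta}(P)\\ C_{21} & 0\end{smallmatrix}\bigr]$; interchanging the two block rows turns this into a block lower triangular matrix, so $\det U_{\mathbb{DG}}=\pm\det(C_{21})\det\bigl(\alpha\mathcal{H}_{\epsilon-\eta}(P)\bigr)$. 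Here the hypothesis $\epsilon\neq\eta$ (with $\eta\leqslant\epsilon$) enters essentially: it guarantees $\epsilon-\eta\geqslant 1$, so $\mathcal{H}_{\epsilon-\eta}(P)$ is a genuine block anti-triangular matrix whose anti-diagonal blocks all equal $-P_0$; thus $\det\mathcal{H}_{\epsilon-\eta}(P)=\pm\det(P_0)^{\epsilon-\eta}$ and $\det(\alpha\mathcal{H}_{\epsilon-\eta}(P))=\pm\alpha^{(\epsilon-\eta)n}\det(P_0)^{\epsilon-\eta}$. Consequently $U_{\mathbb{DG}}$ is nonsingular if and only if $C_{21}\in\textnormal{GL}_{\eta n}(\mathbb{R})$, $P_0\in\textnormal{GL}_n(\mathbb{R})$ and $\alpha\neq 0$. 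Collecting the conditions $\det U_{\mathbb{DG}}\neq 0$, $\det V\neq 0$ and $\alpha\neq 0$ gives exactly statement 2.\ (the requirement $\alpha\neq 0$ being in fact already subsumed by $\det U_{\mathbb{DG}}\neq 0$); in the degenerate case $\eta=0$ the matrices $C_{21},C_2$ are void and the equivalence reads ``$P_0\in\textnormal{GL}_n(\mathbb{R})$ and $\alpha\neq 0$'', as expected from $\dim(\mathbb{DG}_1(P))=1$.

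The genuinely delicate points are two. First, one must make sure that the middle factor $\mathcal{G}(\lambda)$, which is built from $\Pi_{\eta,P}^{\mathbb{DG}}$ rather than from $\Sigma_{\eta,P}$, is still covered by \cite[Thm.~5.2]{DopLPVD16}; this relies on the identity $\Phi(\Pi_{\eta,P}^{\mathbb{DG}})=P$ established inside the proof of Theorem \ref{thm_char_DG}. Second, one has to pin down precisely where $\epsilon\neq\eta$ is used, namely that this is exactly the hypothesis that keeps the block $\alpha\mathcal{H}_{\epsilon-\eta}(P)$ present in $U_{\mathbb{DG}}$ and therefore makes the nonsingularity of $P_0$ a \emph{necessary} ingredient; for $\epsilon=\eta$ that block disappears and a weaker condition would emerge. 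Everything else — the two block-determinant evaluations and the bookkeeping of block sizes — is routine.
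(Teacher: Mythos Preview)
Your argument is correct and coincides with the paper's approach: the paper states that the result is ``immediately derived from Theorem \ref{thm_lincondition} and Theorem \ref{thm_master1}'' and remarks that ``the condition $P_0\in\textnormal{GL}_n(\mathbb{R})$ reflects the nonsingularity of $\mathcal{H}_{\epsilon-\eta}(P)$'', which is exactly what you have spelled out --- reducing strong linearization to regularity via Theorem \ref{thm_master1}, and regularity to the nonsingularity of the constant factors in (\ref{doubleansatz_expr1}), where the role of $P_0$ enters through the block anti-triangular determinant of $\mathcal{H}_{\epsilon-\eta}(P)$. Your explicit verification that the middle factor (built from $\Pi_{\eta,P}^{\mathbb{DG}}$ rather than $\Sigma_{\eta,P}$) is still a block Kronecker pencil for $P(\lambda)$ is a useful clarification the paper leaves implicit.
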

In the case $\epsilon = \eta$ the equivalence in Theorem \ref{master4} holds without the condition $P_0 \in \textnormal{GL}_n( \mathbb{R})$ in the second statement (due to the disappearance of the $\mathcal{H}$-block). In this case the implication $2. \Rightarrow 1.$ holds also for singular matrix polynomials. Moreover, note that the second equivalence in Theorem \ref{master4} is actually just a correspondingly adjusted version of Theorem \ref{thm_lincondition} that takes into account the special structure of pencils in $\mathbb{DG}_{\eta + 1}(P)$ (see (\ref{doubleansatz_expr1})). In particular, the condition $P_0 \in \textnormal{GL}_n( \mathbb{R})$ reflects the nonsingularity of $\mathcal{H}_{\epsilon - \eta}(P)$.

\begin{remark}
Theorem \ref{master4}, in the form given above,  can not be stated for singular matrix polynomials $P(\lambda).$ The second statement will never hold for singular $P(\lambda)$ since these always have a singular trailing coefficient $P_0$. This does a priori not mean that there can not be any linearizations for $P(\lambda)$ in this case, i.e. $1. \Rightarrow 2.$ certainly does not hold for singular matrix polynomials (see Remark \ref{rem_linequiv} and the reference therein).
\end{remark}

\begin{example}[Block Kronecker Pencils]
Notice that a pure block Kronecker pencil (\ref{blockKronpencil}) 
can never be an element of a double block Kronecker ansatz
space $$\mathbb{DG}_{\eta +
1}(P) =
\mathbb{G}_{\eta + 1}(P) \cap \mathbb{G}_{k - \eta}(P)$$ for any matrix polynomial $P(\lambda)$ unless $\eta + 1 =
k - \eta$.
Figuratively speaking, we need some connection between $\mathbb{G}_{\eta + 1}(P)$ and $\mathbb{G}_{k - \eta}(P)$ to
make a
pencil $\mathcal{L}(\lambda)$ an element of both spaces. The core part
$$  C( \mathcal{L}) =  \begin{bmatrix}  P_{\mu - 1} -
\lambda P_{\mu} & P_{\mu - 2} - \lambda P_{\mu - 1} &  \cdots & P_0 - \lambda P_1 & - \lambda
P_0 \\ P_{\mu - 2} - \lambda P_{\mu - 1} & P_{\mu - 3} - \lambda P_{\mu - 2}  & \iddots &
\iddots & \\ \vdots & \iddots  & \iddots  &  & \\ P_0 - \lambda P_1 & - \lambda P_0 & & & \\ - \lambda P_0 &
& & & \end{bmatrix} $$
with $\mu := \epsilon - \eta - 1$ takes on this task. Modulo a scalar multiplication, every pencil in
$\mathbb{DG}_{\eta + 1}(P)$ has the same core part, so it does essentially not depend on the specific pencil but on
the matrix
polynomial $P(\lambda)$. Moreover, $C( \mathcal{L})$ is block-symmetric. This block-symmetry turns out to be an
important property of pencils in double block Kronecker ansatz spaces and is further studied in the next section.
Notice that,
given the case
$\eta + 1 = k - \eta$, the core part vanishes entirely and no further restrictions remain for $\mathbb{DG}_{\eta +
1}(P)$. Only
in this situation we obtain pure block Kronecker pencils.
\end{example}

Consider once again Theorem \ref{master4}. The compliance of the irrevocable condition $P_0 \in \textnormal{GL}_n(
\mathbb{R})$
depends exclusively on the
matrix polynomial $P(\lambda)$ and holds if and only if zero is not an eigenvalue of $P(\lambda)$. On the other
hand, the
conditions $C_{21}, C_2 \in
\textnormal{GL}_{\eta n}(\mathbb{R})$ are satisfied for almost every matrix in $\mathbb{R}^{\eta n
\times \eta n}$. Since the implication $2. \Rightarrow 1.$ in Theorem \ref{master4} holds without the assumption
of
regularity (according to Theorem \ref{thm_lincondition}), we obtain the following general density property.

\begin{corollary}[Linearizations are Generic in $\mathbb{DG}_{\eta + 1}(P)$] \label{generic1} \ \\
Let $P(\lambda)$ be a square
matrix polynomial and assume zero is not an eigenvalue of $P(\lambda)$.
Then almost every matrix pencil in $\mathbb{DG}_{\eta + 1}(P)$ is a strong linearization for $P(\lambda)$.
\end{corollary}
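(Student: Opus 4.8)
The plan is to deduce the corollary directly from the characterization of Theorem~\ref{thm_char_DG} together with the implication $2.\Rightarrow 1.$ of Theorem~\ref{master4}. First I would observe that, for a square $P(\lambda)$, the hypothesis ``$0$ is not an eigenvalue of $P(\lambda)$'' is by definition exactly the statement that $P(0)=P_0$ is nonsingular; in particular $\det P(\lambda)\not\equiv 0$, so $P(\lambda)$ is automatically regular and Theorem~\ref{master4} is applicable. Consequently, of the four conditions collected in statement~2 of Theorem~\ref{master4}, the irrevocable one, $P_0\in\textnormal{GL}_n(\mathbb{R})$, is granted by assumption, and the only conditions left to impose are $\alpha\neq 0$, $C_{21}\in\textnormal{GL}_{\eta n}(\mathbb{R})$ and $C_2\in\textnormal{GL}_{\eta n}(\mathbb{R})$.

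Next I would make the phrase ``almost every matrix pencil in $\mathbb{DG}_{\eta+1}(P)$'' precise by means of the parametrization~(\ref{doubleansatz_expr1}). By Theorem~\ref{thm_char_DG} the assignment $(\alpha,B_{11},C_{11},C_{21},B_2,C_2)\mapsto\mathcal{L}(\lambda)$ defined through~(\ref{doubleansatz_expr1}) is a linear surjection onto $\mathbb{DG}_{\eta+1}(P)$ (once the triple product is multiplied out it is linear, in fact homogeneous, in these parameters), and comparing the total number of free scalars with the dimension formula~(\ref{dimformula_DG}) shows that it is in fact a linear isomorphism $\mathbb{R}^{2k\eta n^2+1}\to\mathbb{DG}_{\eta+1}(P)$. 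Genericity in $\mathbb{DG}_{\eta+1}(P)$ may therefore be tested on this coordinate space. In these coordinates the pencils that \emph{fail} statement~2 of Theorem~\ref{master4} lie in the zero set of the single polynomial $\alpha\cdot\det(C_{21})\cdot\det(C_2)$, which is not the zero polynomial (it takes the value $1$ at $\alpha=1$, $C_{21}=C_2=I_{\eta n}$). Hence this ``bad'' set is a proper algebraic subset of $\mathbb{R}^{2k\eta n^2+1}$, so it is closed, nowhere dense and of Lebesgue measure zero, and its complement is open and dense.

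Finally, invoking $2.\Rightarrow 1.$ of Theorem~\ref{master4} --- which, as remarked before the corollary, ultimately rests on Theorem~\ref{thm_lincondition} and hence needs no regularity assumption --- every pencil whose parameters avoid the bad set is a strong linearization for $P(\lambda)$. Thus almost every matrix pencil in $\mathbb{DG}_{\eta+1}(P)$ is a strong linearization, as asserted. The degenerate case $\epsilon=\eta$ (equivalently $k=2\eta+1$) should be recorded separately but is immediate: there $\mathbb{DG}_{\eta+1}(P)=\mathbb{G}_{\eta+1}(P)$, the block Hankel block $\mathcal{H}_{\epsilon-\eta}(P)$ and the condition on $P_0$ both disappear, and the claim is already contained in Theorem~\ref{thm_generic}.

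I expect the only points requiring genuine care to be the bookkeeping in the first step --- in particular, correctly identifying ``$0$ is not an eigenvalue'' with the nonsingularity of $P_0$ and noticing that this already forces $P(\lambda)$ to be regular --- and the assertion that the parametrization of Theorem~\ref{thm_char_DG} is injective, for which I would invoke the matching of the parameter count with~(\ref{dimformula_DG}) rather than argue injectivity directly from~(\ref{doubleansatz_expr1}). No new linearization-theoretic work is needed; everything is read off from Theorems~\ref{thm_char_DG}, \ref{thm_lincondition}, \ref{master4} and, in the borderline case, \ref{thm_generic}.
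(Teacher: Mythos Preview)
Your proof is correct and follows the same line as the paper: it is precisely the argument given in the paragraph preceding Corollary~\ref{generic1}, namely that the condition $P_0\in\textnormal{GL}_n(\mathbb{R})$ is automatic from the hypothesis while the remaining conditions $\alpha\neq 0$, $C_{21},C_2\in\textnormal{GL}_{\eta n}(\mathbb{R})$ are generic, and then the implication $2.\Rightarrow 1.$ of Theorem~\ref{master4} (resting on Theorem~\ref{thm_lincondition}) finishes the job. Your version is simply more explicit --- spelling out that the parametrization of Theorem~\ref{thm_char_DG} is a linear isomorphism via the dimension count~(\ref{dimformula_DG}) and identifying the bad set as a proper algebraic subvariety --- but there is no substantive difference in strategy.
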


 \begin{remark}
Assume $\eta = 0$ and consider $\mathbb{DG}_1(P)$. Then Theorem \ref{master4} reduces to the Eigenvalue Exclusion
Theorem (see
\cite[Thm. 6.7]{MacMMM06}) which is a powerful tool in the study of the space $\mathbb{DL}(P)$. 
It states in this particular case, that $\mathcal{L}(\lambda) \in \mathbb{DG}_1(P)$ is a strong
linearization for $P(\lambda)$ if and only if no root of the $v$-polynomial  $$p( \lambda ;
\alpha e_1) = \alpha \lambda^{k-1}$$
(see \cite[Def. 6.1]{MacMMM06}) is an eigenvalue of $P(\lambda)$. Since $0$ is the only root of $p( \lambda ;
\alpha e_1)$
this means that $P(0) = P_0$ has to be
nonsingular, i.e. $P_0 \in \textnormal{GL}_n( \mathbb{R})$. Moreover, because the matrices $C_{21}$ and $C_2$ vanish
completely
(see (\ref{ex_DG}) in Example \ref{ex_blockLspace1})
this is the only condition to hold for $\mathcal{L}(\lambda) \in \mathbb{DG}_1(P)$ being a strong
linearization for $P(\lambda)$.
 \end{remark}

\subsection{The Superpartition Principle}
Although double block Kronecker ansatz spaces usually do not contain solely block-symmetric pencils, they possess a
remarkable
feature that we call \enquote{superpartition property}. This property was also recognized by the authors of \cite{DoBPSZ16} and mentioned in their Remark 3.3. To its motivation, consider the following example.

\begin{example} \label{ex_blockspace}
Let $P(\lambda) = \sum_{i=0}^6 P_i \lambda^i$ be an $n \times n$ matrix polynomial of degree
$\textnormal{deg}(P)=6$.
Consider as in Example \ref{ex_blockLspace1} the case $\eta = 1$ ($\epsilon = 4$) and the corresponding matrix pencil
 $\mathcal{K}(\lambda)$
$$
\left[ \begin{array}{cc|ccc:c} \lambda P_6 + P_5 & P_4 &
     A & 0 & -B & -I_n \\ 0 & P_3 & P_2 - \lambda A
     & P_1 & P_0 + \lambda B & \lambda I_n \\ \hdashline 0 & P_2 & P_1 - \lambda
     P_2 & P_0 - \lambda P_1 & - \lambda P_0 & 0 \\ C & P_1 - \lambda C & P_0 - \lambda P_1 & - \lambda P_0 & 0
     & 0 \\ 0 & P_0 & - \lambda P_0 & 0 & 0 & 0 \\ \hline -I_n & \lambda I_n & 0 & 0 & 0 & 0 \end{array}
     \right].
$$
As already discussed, $\mathcal{K}(\lambda) \in \mathbb{DG}_{2}(P) = \mathbb{G}_2(P) \cap \mathbb{G}_5(P).$
Now consider $\mathcal{K}(\lambda)$ in the
slightly modified partitioned form
     \begin{equation}
\left[ \begin{array}{ccc|c:cc} \lambda P_6 + P_5 & P_4 &
     A & 0 & -B & -I_n \\ 0 & P_3 & P_2 - \lambda A
     & P_1 & P_0 + \lambda B & \lambda I_n \\ 0 & P_2 & P_1 - \lambda
     P_2 & P_0 - \lambda P_1 & - \lambda P_0 & 0 \\ \hdashline  C & P_1 - \lambda C & P_0 -
     \lambda P_1 & - \lambda P_0 & 0 & 0 \\ \hline 0 & P_0 & - \lambda P_0 & 0 & 0 & 0 \\  -I_n & \lambda I_n & 0 &
0 & 0
     & 0 \end{array} \right]. \label{example_DG2}
     \end{equation}
It is readily checked, that $\mathcal{K}(\lambda)$ partitioned as in (\ref{example_DG2})  may alternatively be taken
as an
element of $\mathbb{G}_3(P)$ and of $\mathbb{G}_4(P)$ (e.g. $\eta = 2$, $\epsilon =3$). In other words,
$\mathcal{K}(\lambda) \in
\mathbb{DG}_3(P)$.
\end{example}

The next theorem states that the phenomenon highlighted in Example \ref{ex_blockspace} always holds (see also \cite[Thm. 3.10]{DoBPSZ16}). The main
reason behind this
fact is easily seen to be the block-symmetric core part of pencils in double block Kronecker ansatz spaces.

\begin{theorem}[Superpartition Property of $\mathbb{DG}_{\eta + 1}(P)$] \label{thm_interpolation} \ \\
Let $P(\lambda)$ be an $n \times n$ matrix polynomial of degree $k=\epsilon + \eta + 1$ and assume \\$\eta \leqslant
\epsilon$.
Then $\mathcal{L}(\lambda) \in
\mathbb{DG}_{\eta + 1}(P)$ implies that $\mathcal{L}(\lambda) \in \mathbb{G}_{\eta + i}(P)$ for
all $i = 1, 2, \ldots , k - 2 \eta.$
\end{theorem}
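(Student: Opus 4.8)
The plan is to show that a pencil $\mathcal{L}(\lambda) \in \mathbb{DG}_{\eta+1}(P)$, given in its explicit form (\ref{doubleansatz_expr1}), can be rewritten in the form (\ref{def_blockspace_explicit}) appropriate to $\mathbb{G}_{\eta+i}(P)$ for each admissible $i$. The key observation, already hinted at by Example \ref{ex_blockspace} and by the structure of the core part $C(\mathcal{L})$, is that the natural $3\times 3$ partitioning of $\mathcal{L}(\lambda)$ from Figure \ref{fig1} is finer than both the $\mathbb{G}_{\eta+1}(P)$- and the $\mathbb{G}_{k-\eta}(P)$-partitions, and that the block-Hankel core part $\alpha\mathcal{H}_{\epsilon-\eta}(P)$ together with the block-symmetric structure of $C(\mathcal{L})$ means the "dividing line" between the leading $\Sigma$-part and the Kronecker-tail part may be slid to any intermediate position.

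Concretely, I would first fix $i$ with $1 \le i \le k - 2\eta$ and set $\eta' := \eta + i - 1$, $\epsilon' := k - \eta' - 1 = \epsilon - i + 1$, so that $\eta' + \epsilon' + 1 = k$ and $\eta \le \eta' \le \epsilon' \le \epsilon$; the extreme cases $i=1$ and $i = k - 2\eta$ recover $\mathbb{G}_{\eta+1}(P)$ and $\mathbb{G}_{k-\eta}(P)$, for which membership is given. For the intermediate cases, I would verify the ansatz equation (\ref{ansatzequation1}) directly for $\mathbb{G}_{\eta'+1}(P)$, i.e. with $\Lambda_{\eta'}(\lambda)$ and $\Lambda_{\epsilon'}(\lambda)$ in place of $\Lambda_\eta, \Lambda_\epsilon$. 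Using the factorization (\ref{doubleansatz_expr1}) and the known identities $(L_\kappa \otimes I_n)(\Lambda_\kappa \otimes I_n) = 0$, the outer constant factors $U$, $V$ are harmless (they do not affect satisfaction of the ansatz equation up to the scalar $\alpha$, exactly as in the proof of Theorem \ref{thm_lincondition}), so it suffices to check that the "core" pencil $\mathcal{F}$ in the middle of (\ref{doubleansatz_expr1}) lies in $\mathbb{G}_{\eta'+1}(P)$. Here the crucial computation is that $\bigl(\Lambda_{\eta'}(\lambda)^T \otimes I_n\bigr)$, applied to the relevant rows, annihilates all but the appropriate block, and the surviving product reproduces $\alpha P(\lambda)$: this reduces to the scalar polynomial identity that $\Lambda_{\eta'}^T$ "reads off" from the block-Hankel / $\Sigma^{\mathbb{DG}}$ arrangement precisely the coefficients $P_k, \dots, P_0$ weighted by powers of $\lambda$ summing to $P(\lambda)$. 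This is the same kind of telescoping one sees in the verification that $\Phi(\Sigma_{\eta,P}(\lambda)) = P(\lambda)$, now carried out for the shifted partition.

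The main obstacle, and the step deserving the most care, is bookkeeping the off-diagonal Kronecker blocks: for the $\mathbb{G}_{\eta'+1}(P)$-partition one must exhibit $\mathcal{L}_{21}^{(i)}(\lambda) = C_1^{(i)}(L_{\epsilon'}(\lambda)\otimes I_n)$ and $\mathcal{L}_{12}^{(i)}(\lambda) = (L_{\eta'}(\lambda)^T\otimes I_m)C_2^{(i)}$ for suitable constant matrices, and one must check that the lower-right block is genuinely zero. When we slide the partition line, some rows/columns that were "core" in the $\mathbb{DG}$-picture migrate into the $L_{\epsilon'}$- or $L_{\eta'}^T$-tails; one must confirm that the block-Hankel core part, which has the shape $P_j - \lambda P_{j+1}$ in each block entry, factors correctly through the new $L_{\epsilon'}\otimes I_n$ and $L_{\eta'}^T\otimes I_n$ — this is exactly where the block-symmetry of $C(\mathcal{L})$ is used, since it is what makes the factorization work simultaneously from the left and from the right. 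Alternatively, and perhaps more cleanly, I would invoke Lemma \ref{lem_nullspace} and its block-transposed counterpart: once I have checked the ansatz equation (\ref{ansatzequation1}) holds for the parameters $(\epsilon', \eta')$, the characterization Theorem \ref{thm_generalspace} automatically supplies the required block structure, so it is enough to verify the single matrix identity (\ref{ansatzequation1}). I would therefore present the proof as: (1) reduce to verifying (\ref{ansatzequation1}) for $(\epsilon',\eta')$ via Lemma \ref{lem_vectorspace} and Theorem \ref{thm_generalspace}; (2) use the factorization (\ref{doubleansatz_expr1}) to peel off $U$ and $V$; (3) carry out the telescoping identity on the middle pencil, where the block-Hankel/$\Pi^{\mathbb{DG}}$ structure makes the $\Lambda_{\eta'}^T\otimes I$ on the left and $\Lambda_{\epsilon'}\otimes I$ on the right collapse to $\alpha P(\lambda)$; and (4) note the zero block and conclude $\mathcal{L}(\lambda) \in \mathbb{G}_{\eta'+1}(P) = \mathbb{G}_{\eta+i}(P)$.
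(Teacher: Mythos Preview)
Your overall strategy---verify the ansatz equation (\ref{ansatzequation1}) for each intermediate pair $(\epsilon',\eta')$---is sound, and your ``main obstacle'' paragraph (explicitly tracking how the core part refactors through the shifted $L_{\epsilon'}\otimes I_n$ and $L_{\eta'}^T\otimes I_n$) is in fact the approach the paper takes. However, the final four-step plan you settle on has a genuine gap at step~(2): you cannot ``peel off $U$ and $V$'' and reduce to the middle pencil. The analogy with Theorem~\ref{thm_lincondition} is misleading---there $U,V$ are used only to pass strict equivalence, not to preserve an ansatz equation. For the shifted partition $(\epsilon',\eta')$ with $\eta' > \eta$, the operator $(\Lambda_{\eta'}^T\otimes I_n)\oplus I_{\epsilon'n}$ does \emph{not} see $U$ as block upper-triangular, so left-multiplying does not simply pass through. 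More concretely, the middle pencil
\[
\left[\begin{array}{c|c}\alpha\Pi_{\eta,P}^{\mathbb{DG}}(\lambda) & L_\eta^T\otimes I_n\\\hline L_\epsilon\otimes I_n & 0\end{array}\right]
\]
is itself \emph{not} an element of $\mathbb{DG}_{\eta+1}(P)$: its $(2,2)$ block in the $\mathbb{G}_{k-\eta}(P)$ partition contains a nonzero portion of $L_\epsilon\otimes I_n$, so it fails (\ref{ansatzequation3}) for that space. The block $\alpha\mathcal{H}_{\epsilon-\eta}(P)$ inside $U$ is precisely what manufactures the block-symmetric core part $C(\mathcal{L})$, and it is this core part---not the middle pencil---that makes the superpartition work. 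So $U$ is essential, not harmless.

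The paper's proof carries out exactly the explicit bookkeeping you identified as the ``main obstacle'': it writes the shifted upper-left block $\mathcal{L}_{11}^{(i)}(\lambda)$ directly as a sum of a term in the image of $L_{\widetilde\epsilon}\otimes I_n$, a term in the image of $L_{\widetilde\eta}^T\otimes I_n$, and a third term $\alpha\Omega_{\eta+i,P}(\lambda)$ satisfying $\Phi(\Omega_{\eta+i,P})=P$. The off-diagonal and zero-block conditions are dispatched beforehand (in the preceding Remark) by observing that the unalterable zero structure of any $\mathbb{DG}_{\eta+1}(P)$ pencil already covers the $(2,2)$ zero block required by every intermediate $\mathbb{G}_{\eta+i}(P)$, so only $\Phi(\mathcal{L}_{11}^{(i)})=\alpha P$ needs checking. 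If you drop step~(2) and instead execute the explicit decomposition you sketched, your argument becomes correct and essentially coincides with the paper's.
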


For ease of notation in the proof of this theorem we introduce the following partitioning of $\mathcal{L}(\lambda)$
\[\mathcal{L}(\lambda) =
 \left[ \begin{array}{c|c} \mathcal{L}_{11}^{(i)} (\lambda) & \mathcal{L}_{12}^{(i)}(\lambda) \\ \hline
\mathcal{L}_{21}^{(i)}(\lambda) &
\mathcal{L}_{22}^{(i)}(\lambda) \end{array} \right], \quad \mathcal{L}_{11}^{(i)} (\lambda)  \in
\mathbb{R}^{(\widetilde{\eta}+1)n \times (\widetilde{\epsilon}+1)n}, ~~\widetilde{\eta} = \eta +i,
\widetilde{\epsilon} =
\epsilon
-i.
\]
The condition $k = \eta + \epsilon +1 = \widetilde{\eta} + \widetilde{\epsilon}+1$ has to hold.
For $i=0$ this is the natural partition  (\ref{ansatzequation2})  considered so far; in particular,
$\mathcal{L}_{11}^{(0)} (\lambda) = \mathcal{L}_{11} (\lambda).$
Increasing $i$ by one, the upper-left $(1,1)$-block of $\mathcal{L}(\lambda)$ is increased by one block row and
decreased by one
block column.

\begin{remark}
 Due to the assumption $\mathcal{L}(\lambda) \in \mathbb{DG}_{\eta + 1}(P)$, it suffices to show that
$\Phi(\mathcal{L}_{11}^{(i)}(\lambda)) = \alpha P(\lambda)$ for all $i=1, \ldots , k - 2 \eta$ holds in order to prove
Theorem \ref{thm_interpolation}. To see this, consider exemplarily a matrix polynomial $P(\lambda) = \sum_{i=0}^7 P_i \lambda^i$ of degree $\textnormal{deg}(P)=7$ with $\eta = 1$. According to Theorem \ref{thm_char_DG} any pencil $\mathcal{L}(\lambda) \in \mathbb{DG}_{2}(P)$ schematically has the form
\begin{center}
  \begin{tikzpicture}[scale=0.6]
\draw[line width=0.05mm] (0,0) grid (7,7);
\node at (6.5,0.5) {0};
\node at (5.5,0.5) {0};
\node at (4.5,0.5) {0};
\node at (3.5,0.5) {0};
\node at (2.5,0.5) {0};
\node at (6.5,1.5) {0};
\node at (5.5,1.5) {0};
\node at (4.5,1.5) {0};
\node at (3.5,1.5) {0};
\node at (6.5,2.5) {0};
\node at (5.5,2.5) {0};
\node at (4.5,2.5) {0};
\node at (6.5,3.5) {0};
\node at (5.5,3.5) {0};
\node at (6.5,4.5) {0};
\node at (3.5,3.5) {core part};
\draw[thick] (0,5) -- (7,5);
\draw[thick] (6,0) -- (6,7);
\draw[thick] (2,0) -- (2,7);
\draw[thick] (0,1) -- (7,1);
\fill[black!30, opacity=0.3] (0,7) rectangle (6,5);
\fill[black!30, opacity=0.3] (6,0) rectangle (7,5);
\fill[black!30, opacity=0.3] (0,5) rectangle (2,1);
\fill[black!30, opacity=0.3] (2,1) rectangle (6,0);
\end{tikzpicture}
\end{center}
with the indicated unalterable zero-structure and the $3 \times 3$ partitioning as in Figure 1 and 2. The following sketches indicate the natural partitioning (\ref{ansatzequation2})
of pencils in the block Kronecker ansatz spaces $\mathbb{DG}_{\kappa}(P)$, $\kappa = 3,4,5,6$ applied to the pencil $\mathcal{L}(\lambda)$:
\begin{center}
\begin{tikzpicture}[scale=0.35]
\draw[line width=0.05mm] (0,0) grid (7,7);
\node at (6.5,0.5) {0};
\node at (5.5,0.5) {0};
\node at (4.5,0.5) {0};
\node at (3.5,0.5) {0};
\node at (2.5,0.5) {0};
\node at (6.5,1.5) {0};
\node at (5.5,1.5) {0};
\node at (4.5,1.5) {0};
\node at (3.5,1.5) {0};
\node at (6.5,2.5) {0};
\node at (5.5,2.5) {0};
\node at (4.5,2.5) {0};
\node at (6.5,3.5) {0};
\node at (5.5,3.5) {0};
\node at (6.5,4.5) {0};
\draw[thick] (0,4) -- (7,4);
\draw[thick] (5,0) -- (5,7);
\fill[black!30, opacity=0.3] (0,7) rectangle (5,4);
\fill[black!30, opacity=0.3] (5,4) rectangle (7,0);
\node at (8.5,3.5) {$\rightsquigarrow$};
\end{tikzpicture}
\begin{tikzpicture}[scale=0.35]
\draw[line width=0.05mm] (0,0) grid (7,7);
\node at (6.5,0.5) {0};
\node at (5.5,0.5) {0};
\node at (4.5,0.5) {0};
\node at (3.5,0.5) {0};
\node at (2.5,0.5) {0};
\node at (6.5,1.5) {0};
\node at (5.5,1.5) {0};
\node at (4.5,1.5) {0};
\node at (3.5,1.5) {0};
\node at (6.5,2.5) {0};
\node at (5.5,2.5) {0};
\node at (4.5,2.5) {0};
\node at (6.5,3.5) {0};
\node at (5.5,3.5) {0};
\node at (6.5,4.5) {0};
\draw[thick] (0,3) -- (7,3);
\draw[thick] (4,0) -- (4,7);
\fill[black!30, opacity=0.3] (0,7) rectangle (4,3);
\fill[black!30, opacity=0.3] (4,3) rectangle (7,0);
\node at (8.5,3.5) {$\rightsquigarrow$};
\end{tikzpicture}
\begin{tikzpicture}[scale=0.35]
\draw[line width=0.05mm] (0,0) grid (7,7);
\node at (6.5,0.5) {0};
\node at (5.5,0.5) {0};
\node at (4.5,0.5) {0};
\node at (3.5,0.5) {0};
\node at (2.5,0.5) {0};
\node at (6.5,1.5) {0};
\node at (5.5,1.5) {0};
\node at (4.5,1.5) {0};
\node at (3.5,1.5) {0};
\node at (6.5,2.5) {0};
\node at (5.5,2.5) {0};
\node at (4.5,2.5) {0};
\node at (6.5,3.5) {0};
\node at (5.5,3.5) {0};
\node at (6.5,4.5) {0};
\draw[thick] (0,2) -- (7,2);
\draw[thick] (3,0) -- (3,7);
\fill[black!30, opacity=0.3] (0,7) rectangle (3,2);
\fill[black!30, opacity=0.3] (3,2) rectangle (7,0);
\node at (8.5,3.5) {$\rightsquigarrow$};
\end{tikzpicture}
\begin{tikzpicture}[scale=0.35]
\draw[line width=0.05mm] (0,0) grid (7,7);
\node at (6.5,0.5) {0};
\node at (5.5,0.5) {0};
\node at (4.5,0.5) {0};
\node at (3.5,0.5) {0};
\node at (2.5,0.5) {0};
\node at (6.5,1.5) {0};
\node at (5.5,1.5) {0};
\node at (4.5,1.5) {0};
\node at (3.5,1.5) {0};
\node at (6.5,2.5) {0};
\node at (5.5,2.5) {0};
\node at (4.5,2.5) {0};
\node at (6.5,3.5) {0};
\node at (5.5,3.5) {0};
\node at (6.5,4.5) {0};
\draw[thick] (0,1) -- (7,1);
\draw[thick] (2,0) -- (2,7);
\fill[black!30, opacity=0.3] (0,7) rectangle (2,1);
\fill[black!30, opacity=0.3] (2,1) rectangle (7,0);
\end{tikzpicture}
\end{center}
This example shows, that the zero-structure of a pencil in $\mathbb{DG}_2(P)$ is exactly of the form that it covers all the $(2,2)$-zero blocks of pencils in $\mathbb{DG}_{\kappa}(P)$ with $2 \leq \kappa \leq 6$. Moreover, due to the special construction of the core part $C( \mathcal{L})$, the (1,2) and (2,1)-corner blocks as well as the upper-left (1,1)-block in the sketches above are always reproducible in every ansatz space $\mathbb{DG}_{\kappa}(P)$ for $2 \leq \kappa \leq 6$. Since the situation is exactly the same for other degrees of $P(\lambda)$ we only need to show that $\Phi(
\mathcal{L}_{11}^{(i)}(\lambda)) = \alpha P(\lambda)$ holds for all $i=1, \ldots , k - 2 \eta$ to prove Theorem \ref{thm_interpolation}.
That the latter holds is once more a consequence of the form of $C( \mathcal{L})$.
\end{remark}

\begin{proof}[Proof (Theorem \ref{thm_interpolation})]
First of all, according to (\ref{doubleansatz_expr1}),
$\mathcal{L}_{11}(\lambda)$ may be expressed as
$$ \mathcal{L}_{11}(\lambda) = \big[ \, B_{11} \; 0 \; \big]( L_{\epsilon} \otimes I_n) + (L_{\eta}^T \otimes
I_n) \big[ \,
B_{21} \; B_{22} \, \big] + \big[ \; \Sigma_{\eta, P}^{\mathbb{DG}} \; \mathcal{R}_{\eta,P} \; \big]$$
with $B_2 = [ \, B_{21} \; B_{22} \, ], B_{21} \in \mathbb{R}^{\eta n \times (\eta + 1)n}$ and $\mathcal{R}_{\eta , P}$ as in (\ref{Rmatrix}). Then,
we obtain that  $\mathcal{L}_{11}^{(i)}(\lambda) $
may be expressed  as
\footnotesize{ $$
{\mathcal{L}_{11}^{(i)}(\lambda) = \left[ \begin{array}{c|c} \begin{array}{c} B_{11} \\ \hline C_{1,i} \end{array}
& 0
\end{array} \right] (L_{\widetilde{\epsilon}} \otimes I_n)  + (L_{\widetilde{\eta}}^T \otimes I_n) \left[
\begin{array}{c|c}
B_{21} & B_{22,i} \\ \hline 0 & \alpha H_{\epsilon - \eta}^{(i)}(P)  \end{array} \right] +  \alpha \Omega_{\eta +
i,P}(\lambda)}
$$ }
\normalsize
with the $(\widetilde{\eta}+1)n \times (\widetilde{\epsilon}+1)n$ matrix pencil $\Omega_{\eta + i,P}(\lambda)$
\begin{equation}
\Omega_{\eta + i,P}(\lambda) = \left[ \begin{array}{ccc|ccc}  \lambda P_k + P_{k-1} & \cdots & P_{\epsilon} & & &
\\ & &
\vdots & & & \\ & & P_{\epsilon - \eta} & & & \\ \hline & & P_{\epsilon - \eta - 1} & & & \\ & & \vdots & & & \\ &
&
P_{\epsilon - \eta - i} & P_{\epsilon - \eta - i - 1} & \cdots & P_0
\end{array} \right].
\label{matrix_Omega}
\end{equation}
Here, $\mathcal{H}_{\epsilon - \eta}^{(i)}(P)$ denotes the upper left $in \times (\epsilon - \eta - i)n$ submatrix
of
$\mathcal{H}_{\epsilon - \eta}(P)$, $C_{1,i}$ the first $in$ rows of $C_{1}$, i.e. $C_{1,i} \in \mathbb{R}^{in
\times
(\epsilon - \eta)n}$, and $B_{22,i}$ the matrix $B_2$ missing the last $in$ columns, i.e. $B_{22,i} \in
\mathbb{R}^{\eta n \times
(\epsilon - \eta - i)n}$. Now, since $\Phi(\Omega_{\eta + i,P}(\lambda)) = P(\lambda)$ holds we obtain $\Phi(
\mathcal{L}_{11}^{(i)}(\lambda)) = \alpha P(\lambda)$.
\end{proof}
\begin{remark}
According to Example \ref{ex_blockspace} it is not surprising, that Theorem \ref{thm_interpolation} holds. The
property of a
matrix pencil $\mathcal{L}(\lambda)$ being an
element of $\mathbb{DG}_{\eta + 1}(P)$ imposes several restrictions on the form of $\mathcal{L}(\lambda)$. In
particular,
whereas the bordering blocks in the $3\times 3$ partitioning as in Figure \ref{fig1} underly the condition of having
no
contribution in one space and being completely reproducible in the
other (see Theorem \ref{thm_char_DG}), the core part of the pencil has to be adequate for both spaces,
$\mathbb{G}_{\eta + 1}(P)$
and $\mathbb{G}_{k - \eta}(P)$. This lucky circumstance determines the (block-symmetric) form of $C(\mathcal{L})$
completely as
depicted in the picture below and, no
matter how $\eta$ and $\epsilon$ are chosen, guarantees that $\Phi(
\mathcal{L}_{11}^{(i)}(\lambda)) = \alpha P(\lambda)$ always holds.
\vspace*{0.1cm}
\begin{center}
\footnotesize{ \begin{tikzpicture}[scale=0.5]
\matrix[matrix of math nodes, left delimiter={[}, right delimiter={]}]{ \node (P11) {\textcolor{black}{P_{\mu +
1}}}; & P_{\mu} &
P_{\mu - 1} & P_{\mu - 2} & \cdots & P_1 & \node (P1k) {P_0}; \\ \node (P21) {\textcolor{black}{P_{\mu}}}; &
\textcolor{black}{P_{\mu
- 1}} -
\lambda P_{\mu} & \textcolor{black}{P_{\mu - 2}} - \lambda P_{\mu - 1} & \textcolor{black}{P_{\mu - 3}} - \lambda
P_{\mu - 2} &
\cdots & \node (P2k-1) {\textcolor{black}{P_0} - \lambda P_1}; & - \lambda
P_0 \\ \textcolor{black}{P_{\mu - 1}} & P_{\mu - 2} - \lambda P_{\mu - 1} & P_{\mu - 3} - \lambda P_{\mu - 2} &
P_{\mu - 4} -
\lambda P_{\mu - 3} &  & \iddots & \\ \textcolor{black}{P_{\mu -2 }} & P_{\mu - 3} - \lambda P_{\mu - 2} & P_{\mu -
4} -
\lambda
P_{\mu - 3} & \cdots & \iddots & & \\ \vdots & \vdots & \vdots & \iddots &   &  & \\ \node (P1)
{\textcolor{black}{P_1}}; &
\textcolor{black}{P_0} - \lambda P_1 & - \lambda P_0 & & & & \\ \node (Pk1) {P_0}; &  - \lambda P_0 & &
& & & \\ };
\draw[dashed] (P11.west |- P1k.south) -- (P1k.south -| P1k.east);
\draw[dashed] (P11.east |- P1k.north) -- (P11.east |- Pk1.south);
\draw[color=black] (P21.south -| P21.west) -- (P21.south -| P2k-1.east);
\draw[color=black] (P21.south -| P2k-1.east) -- ++(0,2.1);
\draw[color=black] (P1.south -| P1.west) -- ++(6.5,0) -- ++ (0,8.4);
\fill[color=black!20!white, opacity=0.2] (P11.south -| P11.east) rectangle ++(22,-8.5);
\path (P11.south -| P11.east) -- ++(17,-7.8) node {core part $C(\mathcal{L})$};
\end{tikzpicture} }  \end{center}
\end{remark}

The next algorithm presents a procedure to reformulate a pencil from $\mathbb{DG}_{\eta + 1}(P)$ as an element of
$\mathbb{DG}_{\eta + i + 1}$ for all $i= 1, \ldots
, \lfloor \tfrac{\epsilon - \eta}{2} \rfloor$. This 
implies
$\mathcal{L}(\lambda)  \in \mathbb{G}_{\eta + i}(P)$ for all $i=2, \ldots , k-2 \eta - 1$.\\

\noindent \textbf{Algorithm 1: Shift-Procedure for Pencil Expressions} \\
Let $P(\lambda) = \sum_{i=0}^k P_i \lambda^i$ be an $n \times n$ matrix polynomial of degree $k= \epsilon + \eta +
1$ and assume
$\eta \leqslant \epsilon$. In addition, let a matrix pencil $\mathcal{L}(\lambda) \in \mathbb{DG}_{\eta + 1}(P)$ be
given as in
(\ref{doubleansatz_expr1}).  \\
\begin{itemize}
\item[1.]Choose any $i=1, \ldots , \lfloor \tfrac{\epsilon - \eta}{2}  \rfloor$ and partition
$\mathcal{H}_{\epsilon - \eta}(P)$ as follows:
$$  \mathcal{H}_{\epsilon - \eta}(P) = \left[ \begin{array}{c} J_i(P) \\ \hline \begin{array}{c|c|c} H_i(P) &
\mathcal{H}_{\epsilon
- \eta - 2i}(P) & 0 \\ \hline \mathcal{H}_i(P) & 0 & 0 \end{array} \end{array} \right].
$$
\item[2.] Partition $C_{11}$ as $C_{11}^{\mathcal{B}} = \big[ \, c_1 \; c_2 \; \ldots \; c_{\epsilon - \eta} \,
\big]$ with $c_i \in \mathbb{R}^{\eta n \times n}$ and compute the matrices
$$
 \widetilde{B}_{11}^{(i)} = \left[ \begin{array}{c|c} \begin{array}{c} B_{11} \\[0.1cm] \hline c_1^{\mathcal{B}} \\
\vdots \\
c_i^{\mathcal{B}} \end{array} &
0_{(\eta + i + 1)n \times i n} \end{array} \right] , \qquad
\widetilde{C}_{21}^{(i)} = \left[ \begin{array}{c|c}
\begin{array}{c} c_{(\epsilon - \eta)-i+1}^{\mathcal{B}} \\ \vdots \\ c_{\epsilon - \eta}^{\mathcal{B}} \end{array}
&
\mathcal{H}_i(P) \\[0.1cm] \hline C_{21} &
0_{\eta n \times i n}
\end{array} \right] ,
$$
and
$$  \widetilde{C}_{11}^{(i)} = \left[ \begin{array}{c|c} \begin{array}{c}
c_{i+1}^{\mathcal{B}} \\ \vdots \\ c_{(\epsilon - \eta)-i}^{\mathcal{B}} \end{array} & H_i(P) \end{array}
\right].
$$
Note $\widetilde{B}_{11}^{(i)} \in \mathbb{R}^{(\eta + 1 + i)n \times (\eta + i)n}$, $\widetilde{C}_{11}^{(i)} \in
\mathbb{R}^{(\epsilon - \eta - 2i)n \times (\eta + i)n}$ and $\widetilde{C}_{21}^{(i)} \in \mathbb{R}^{(\eta + i)n
\times (\eta +
i)n}$.

\item[3.] Compute the matrix $ \Omega_{\eta + i,P}(\lambda)$ from (\ref{matrix_Omega}) and express
$\mathcal{L}(\lambda)$ as
\begin{equation}
{\small
\left[ \begin{array}{c|c|c} I_{(\widetilde{\eta}+1)n} &  \widetilde{B}_{11}^{(i)} & 0
\\  \hline 0 &  \widetilde{C}_{11}^{(i)} &
\alpha \mathcal{H}_{\widetilde{\epsilon} - \widetilde{\eta}}(P) \\ \hline  0 &  \widetilde{C}_{21}^{(i)} & 0
\end{array} \right]
\left[
\begin{array}{c|c} \alpha \Omega_{\widetilde{\eta} ,
P}(\lambda)
& L_{\widetilde{\eta}}^T \otimes I_n
\\ \hline L_{\widetilde{\epsilon}} \otimes I_n & 0 \end{array} \right]
\left[ \begin{array}{c} \begin{array}{c|c} I_{(\widetilde{\epsilon}+1)n} & 0 \\ \hline
\widetilde{B}_2^{(i)} & \widetilde{C}_{2}^{(i)} \end{array}
\end{array} \right]} \label{shiftform}
\end{equation}
with $\widetilde{\eta} = \eta + i$, $\widetilde{\epsilon} = \epsilon - i$ and $ \begin{bmatrix}
\widetilde{B}_2^{(i)} &
\widetilde{C}_2^{(i)} \end{bmatrix} = \left[ \begin{array}{c} B_2 \hspace{2cm} C_2 \\
\hline
0_{in \times (\eta + 1)n} \; \, J_i(P) \; \, 0_{in \times \eta n} \end{array} \right]. $
\end{itemize}
Now the pencil $\mathcal{L}(\lambda)$ is an element of $\mathbb{DG}_{\widetilde{\eta} + 1}(P)$. Notice that we did
not formulate
$\mathcal{L}(\lambda)$ in terms of $\Pi_{\eta + i,P}^{\mathbb{DG}}(\lambda)$ as in (\ref{doubleansatz_expr1}).
Although this is
possible,
it is easier (and seems more natural) to just use $\Omega_{\eta + i,P}(\lambda)$ which is directly
available.\footnote{However,
having (\ref{shiftform}) we are certainly able to modify $\widetilde{B}_{11}^{(i)}$ and $\widetilde{B}_2^{(i)}$
appropriately to
express
$\mathcal{L}(\lambda)$ is the form (\ref{doubleansatz_expr1}).}

We illustrate this
procedure in the following example.
\begin{example}
Let $P(\lambda) = \sum_{i=0}^7 P_i \lambda^i$ be an $n \times n$ matrix polynomial of degree
$\textnormal{deg}(P) = 7.$
Consider the matrix pencil $\mathcal{L}(\lambda)$
 \footnotesize{ \[
\mathcal{L}(\lambda) = {\small \left[ \begin{array}{cccc:c:c:c} \lambda P_7 + P_6 & P_5 & -A & -B & -C & -D & -E \\
0 & P_4 &
\lambda
A + P_3 & \lambda B + P_2 & \lambda C + P_1 & \lambda D + P_0 & \lambda E \\ \hdashline -F & P_3 + \lambda F & P_2
- \lambda P_3
& P_1 - \lambda P_2 & P_0 - \lambda P_1 & - \lambda P_0 & 0 \\ \hdashline - G & P_2 + \lambda G & P_1 - \lambda P_2
& P_0 -
\lambda P_1 & - \lambda P_0 & & \\ \hdashline
- H & \lambda H + P_1 & P_0 - \lambda P_1 & - \lambda P_0 & & & \\ -J & P_0 + \lambda J & - \lambda P_0 & & & & \\
- K & \lambda
K & & & & & \end{array} \right]}
\]}
\normalsize{with arbitrary $n \times n$ matrices $A,B, \ldots , K.$}
\normalsize{This matrix pencil $\mathcal{L}(\lambda)$ is an element of $\mathbb{DG}_{2}(P)$ since it can be
expressed as}
\footnotesize{ \[
{ \left[ \begin{array}{cc|c|cccc} I_n &  & 0 &  &  &  &  \\  & I_n & 0 &
&  &  &  \\ \hline & & F & -P_3 & -P_2 &
-P_1 & -P_0 \\ & & G & -P_2 & -P_1 & -P_0 & \\  & & H & -P_1 & -P_0 & & \\ & &
J & -P_0 & & & \\ \hline  & & K & & & \end{array}
\right]
\Psi_1
\left[
\begin{array}{cccccc|c} I_n & & & & & & \\ & I_n & & & & & \\ & & I_n & & & & \\ & & & I_n & & & \\ & & & & I_n & &
\\ & & & & &
I_n & \\ \hline 0 & 0 & A & B & C & D & E \end{array} \right] },
\] }
\normalsize{with}
\[\Psi_j =
\left[ \begin{array}{c|c} \Omega_{j,P}(\lambda) & L_j(\lambda)^T \otimes I_n \\ \hline L_{6-j}(\lambda) \otimes I_n & 0 \end{array}
\right].
\]
\normalsize{For $i=1$ we obtain according to Algorithm 2}
\footnotesize{ \[
\small { \left[ \begin{array}{ccc|cc|cc} I_n & & & 0 & 0 & & \\ & I_n & &
0 & 0 & & \\ & & I_n & F & 0 & & \\ \hline & & &
G &
-P_2 & -P_1 & -P_0 \\ & & & H & -P_1 & -P_0 & \\ \hline & & &
J & -P_0 & & \\ & & & K & 0 & & \end{array}
\right]
\Psi_2
\left[
\begin{array}{ccccc|cc}
I_n & & & & & & \\ & I_n & & & & & \\ & & I_n & & & & \\ & & & I_n & & & \\ & & & & I_n & & \\ \hline  0 & 0 & A &
B & C & D & E
\\ 0 & 0 & -P_3 & -P_2 & -P_1 & -P_0 &  \end{array} \right] .}
\]}
\normalsize{According to (\ref{shiftform}) this is the expression of $\mathcal{L}(\lambda)$ in the space
$\mathbb{DG}_{3}(P)$.
Now, since
$\lfloor \tfrac{\epsilon - \eta}{2} \rfloor = 2$ we may also consider the case $i=2$. Algorithm 2 gives in this
case}
\footnotesize{ \[
\footnotesize{ \left[ \begin{array}{cccc|ccc} I_n & & & & 0 & 0 &
0 \\ & I_n & & & 0 & 0 & 0 \\ & & I_n & &
F & 0 & 0 \\ & & & I_n & G &
0 & 0 \\
\hline  & & & & H & -P_1 & -P_0 \\ & & & & J
& -P_0 & 0 \\ & & & & K & 0 &
0 \end{array} \right]
\Psi_3
\left[ \begin{array}{cccc|ccc} I_n & & & & & & \\ & I_n & &
&
& & \\ & & I_n & & & & \\ & & & I_n & & & \\ \hline 0 & 0 & A & B & C & D & E \\ 0 & 0 & -P_3 & -P_2 & -P_1 & -P_0
& \\ 0 & 0 &
-P_2
& -P_1 &
-P_0 & & \end{array} \right] .}
\] }
\normalsize{This is the expression of $\mathcal{L}(\lambda)$ as an element of $\mathbb{DG}_4(P)$. In
this case,
$\mathbb{DG}_4(P) = \mathbb{G}_4(P) \cap \mathbb{G}_4(P)$, so there are no additional restrictions for a pencil of
$\mathbb{G}_4(P)$ for being an element of $\mathbb{DG}_4(P)$. This complies with the disappearance of the
$\mathcal{H}$-block
and
the zero-blocks in (\ref{doubleansatz_expr1}).}
\end{example}
The following observation is immediate.
\begin{corollary}[Inclusion Property for $\mathbb{DG}_{\eta + 1}(P)$ Spaces] \label{cor_inclusionDG} \ \\
Let $P(\lambda)$ be an $n \times n$ matrix polynomial of degree $k= \epsilon + \eta + 1$.
Then we have
 \begin{equation}
  \mathbb{DG}_1(P) \, \subsetneqq \, \mathbb{DG}_2(P) \, \subsetneqq \, \cdots \, \subsetneqq \,
\mathbb{DG}_{\lceil
\tfrac{k}{2} \rceil}(P). \label{sequenceDG}
 \end{equation}
\end{corollary}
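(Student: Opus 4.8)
The plan is to obtain (\ref{sequenceDG}) from two ingredients that are already available: the Superpartition Property (Theorem \ref{thm_interpolation}) for the inclusions, and the dimension formula (\ref{dimformula_DG}) for their properness. First I would fix the indexing. Under the standing convention $\eta \leqslant \epsilon = k - \eta - 1$, the admissible values of $\eta$ are $0, 1, \ldots, \lfloor (k-1)/2 \rfloor$, and since $\lfloor (k-1)/2 \rfloor + 1 = \lceil k/2 \rceil$, the spaces appearing in (\ref{sequenceDG}) are precisely $\mathbb{DG}_{\eta + 1}(P) = \mathbb{G}_{\eta + 1}(P) \cap \mathbb{G}_{k - \eta}(P)$ for $\eta = 0, 1, \ldots, \lceil k/2 \rceil - 1$. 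Hence it suffices to prove $\mathbb{DG}_{\eta + 1}(P) \subsetneqq \mathbb{DG}_{\eta + 2}(P)$ whenever $\eta + 2 \leqslant \lceil k/2 \rceil$.

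For the inclusion, fix such an $\eta$ and let $\mathcal{L}(\lambda) \in \mathbb{DG}_{\eta + 1}(P)$. Since $\lceil k/2 \rceil \leqslant (k+1)/2$, the hypothesis $\eta + 2 \leqslant \lceil k/2 \rceil$ yields $2\eta \leqslant k - 3$, so that $k - 2\eta \geqslant 3$; consequently both $i = 2$ and $i = k - 2\eta - 1$ lie in the range $\{1, \ldots, k - 2\eta\}$ for which Theorem \ref{thm_interpolation} is stated. Applying Theorem \ref{thm_interpolation} with $i = 2$ and with $i = k - 2\eta - 1$ shows $\mathcal{L}(\lambda) \in \mathbb{G}_{\eta + 2}(P)$ and $\mathcal{L}(\lambda) \in \mathbb{G}_{k - \eta - 1}(P)$. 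Now, by definition, $\mathbb{DG}_{\eta + 2}(P) = \mathbb{G}_{(\eta + 1) + 1}(P) \cap \mathbb{G}_{k - (\eta + 1)}(P) = \mathbb{G}_{\eta + 2}(P) \cap \mathbb{G}_{k - \eta - 1}(P)$, so $\mathcal{L}(\lambda) \in \mathbb{DG}_{\eta + 2}(P)$. This proves $\mathbb{DG}_{\eta + 1}(P) \subseteq \mathbb{DG}_{\eta + 2}(P)$.

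For properness I would invoke Theorem \ref{thm_char_DG}. Writing $\mathbb{DG}_j(P) = \mathbb{DG}_{\eta + 1}(P)$ with $\eta = j - 1$, the dimension formula (\ref{dimformula_DG}) reads $\textnormal{dim}(\mathbb{DG}_j(P)) = 2k(j - 1)n^2 + 1$, hence $\textnormal{dim}(\mathbb{DG}_{j + 1}(P)) - \textnormal{dim}(\mathbb{DG}_j(P)) = 2kn^2 > 0$ (recall $k \geqslant 2$ and $n \geqslant 1$). Thus $j \mapsto \textnormal{dim}(\mathbb{DG}_j(P))$ is strictly increasing, and combined with the inclusion just established this forces every inclusion in (\ref{sequenceDG}) to be proper.

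I do not expect a genuine obstacle here: the corollary is essentially a bookkeeping consequence of Theorem \ref{thm_interpolation} and Theorem \ref{thm_char_DG}. The only point that needs a little care is the index arithmetic, namely checking that the two chosen values of $i$ are admissible for Theorem \ref{thm_interpolation} and that $\mathbb{G}_{k - \eta - 1}(P)$ is indeed the second factor of $\mathbb{DG}_{\eta + 2}(P)$; both reduce to the observation that $\eta + 2 \leqslant \lceil k/2 \rceil$ implies $2\eta \leqslant k - 3$.
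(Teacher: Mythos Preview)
Your proof is correct and follows precisely the route the paper intends: the corollary is stated as an ``immediate observation'' following Theorem~\ref{thm_interpolation}, and your argument simply makes explicit how the Superpartition Property gives the inclusions while the dimension formula~(\ref{dimformula_DG}) from Theorem~\ref{thm_char_DG} gives their properness. The index bookkeeping you carry out (in particular, verifying that $\eta+2\leqslant\lceil k/2\rceil$ forces $k-2\eta\geqslant 3$ so that both $i=2$ and $i=k-2\eta-1$ are admissible in Theorem~\ref{thm_interpolation}) is exactly the small care needed and is handled correctly.
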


\subsection{Block-symmetric Pencils and the Spaces $\mathbb{BG}_{\eta + 1}(P)$}

This section is dedicated to the basic study of block-symmetric pencils in double block Kronecker ansatz spaces.
Block-symmetric block Kronecker pencils have already been considered in \cite{FassPS16}, whereas vector spaces of
block-symmetric
pencils are investigated in \cite{BueDFR15} and \cite{MacMMM06}. For motivation, consider once more the
matrix pencil $\mathcal{K}(\lambda)$ in (\ref{ex_DG2}).

\begin{remark} Example \ref{ex_blockLspace1} showed, in contrast to our experience
with the classical double ansatz space $\mathbb{DL}(P)$, that
not all matrix pencils in $\mathbb{DG}_{\eta + 1}(P)$ are block-symmetric.
Nevertheless, considering $\mathcal{K}(\lambda)$ from Example \ref{ex_blockLspace1} it is not hard to see how a block-symmetric matrix
pencil $\widetilde{\mathcal{K}}(\lambda)$ in $\mathbb{DG}_{2}(P)$ can be built. For $\widetilde{\mathcal{K}}(\lambda)$ we chose the $(1,1)$ block to be block-symmetric and adjust the bordering blocks to obtain a block-symmetric pencil:
     \begin{equation}
     \widetilde{\mathcal{K}}(\lambda) = \left[ \begin{array}{cc|ccc:c} \lambda P_6 + P_5
     & \tfrac{1}{2} P_4 & A & C & -B & -I_n \\ \tfrac{1}{2} P_4
     & P_3 & P_2 - \lambda A & P_1 - \lambda C &
     P_0 + \lambda B & \lambda I_n \\ \hdashline A & P_2 - \lambda A & P_1 -
     \lambda P_2 & P_0 - \lambda P_1 & - \lambda P_0 & 0 \\ C & P_1 - \lambda C & P_0 - \lambda P_1 & - \lambda
     P_0 & 0 & 0 \\ -B & P_0 + \lambda B & - \lambda P_0 & 0 & 0 & 0 \\ \hline -I_n & \lambda I_n & 0 & 0 & 0 &
     0 \end{array} \right]. \label{symmetrized_pencil}
     \end{equation} \end{remark}

\begin{definition}[Block-symmetric Block Kronecker Ansatz Space] \label{def_spaceBG} \ \\
Let $P(\lambda)$ be an $n \times n$ matrix polynomial of degree $k=\epsilon + \eta + 1$ and assume $\eta \leqslant
\epsilon$. Then we define
$$
 \mathbb{BG}_{\eta + 1}(P) = \big\lbrace \mathcal{L}(\lambda) \in \mathbb{DG}_{\eta + 1}(P) \; \big| \;
\mathcal{L}(\lambda) =
\mathcal{L}(\lambda)^{\mathcal{B}} \big\rbrace .
$$
\end{definition}

As Example \ref{ex_blockspace} immediately suggests, in general $\mathbb{DG}_{\eta + 1}(P) \neq \mathbb{BG}_{\eta +
1}(P)$ holds.
In fact, $\mathbb{BG}_{\eta + 1}(P)$ is a proper subspace of $\mathbb{DG}_{\eta + 1}(P)$ for $\eta > 0$ (see
Theorem
\ref{thm_char_BG} below) and therefore a nowhere dense subset in $\mathbb{DG}_{\eta + 1}(P)$. 

\begin{remark}
To find or construct block-symmetric pencils in $\mathbb{DG}_{\eta + 1}(P)$ several aspects have to be considered.
As in the previous discussion, the matrix pencils will be partitioned into a $3\times 3$ block matrix as in Figure
\ref{fig1}.
First and foremost (\ref{symmetrized_pencil}) reveals, that we have to take care of the bordering blocks in order
to
enforce pencils $\mathcal{L}(\lambda)$ in $\mathbb{DG}_{\eta + 1}(P)$ on being block-symmetric. Secondly, the upper
left square
diagonal block 
certainly has to be block-symmetric as well. Thirdly, we do not have to take care of the core
part of the pencil which is, for pencils in $\mathbb{DG}_{\eta + 1}(P)$, block-symmetric anyway. These conditions
were taken into
account in the following algorithm.
\end{remark}

\noindent \textbf{Algorithm 2: Construction Procedure for Block-symmetric Pencils} \\
\noindent Let $P(\lambda) = \sum_{i=0}^k P_i \lambda^i$ be an $n \times n$ matrix polynomial of degree $k= \epsilon
+ \eta + 1$.
\\
\begin{itemize}
\item[1.] Compute the matrix
$$
\Sigma^{\mathbb{BG}}_{\eta,P}(\lambda) = \begin{bmatrix}
\lambda P_k + P_{k-1} & & & \\
& \lambda P_{k-2} + P_{k-3} & & \\ & & \ddots & \\ & & & \lambda P_{\epsilon - \eta +1} + P_{\epsilon - \eta}
\end{bmatrix} $$
and set $ \Pi_{\eta ,
P}^{\mathbb{BG}}(\lambda) = \begin{bmatrix}  \Sigma_{\eta , P}^{\mathbb{BG}}(\lambda) & \mathcal{R}_{\eta,P}
\end{bmatrix}.$ Note that $\Sigma_{\eta, P}^{\mathbb{BG}}(\lambda) \in \mathbb{R}[\lambda]^{(\eta + 1)n \times (\eta + 1)n}.$ (For the definition of $\mathcal{R}_{\eta , P}$ see (\ref{Rmatrix})). 

\item[2.] Compute the matrix $$ C_1 = \left[
\begin{array}{c|c} C_{11} & \alpha \mathcal{H}_{\epsilon - \eta}(P) \\[0.1cm] \hline C_{21} & 0_{\eta n \times
(\epsilon -
\eta)n} \end{array} \right] \in \mathbb{R}^{ \epsilon n \times \epsilon n} $$
with arbitrary matrices $C_{11} \in \mathbb{R}^{(\epsilon - \eta)n \times \eta n}$ and $C_{21} \in \mathbb{R}^{\eta
n \times \eta
n}$.

\item[3.] Choose an arbitrary matrix $B_{11} \in \mathbb{R}^{(\eta + 1)n \times \eta n}$ and set
\begin{equation} B_1= \begin{bmatrix}  B_{11} & 0_{(\eta
+1)n \times (\epsilon - \eta)n} \end{bmatrix} \qquad C_2 = C_{21}^{\mathcal{B}} \qquad B_2 = \begin{bmatrix}
B_{11}^{\mathcal{B}}
& C_{11}^{\mathcal{B}} \end{bmatrix}. \label{help1} \end{equation}

\item[4.]
Construct the $kn \times kn$ matrix pencil $\mathcal{L}(\lambda) \in \mathbb{DG}_{\eta + 1}(P)$:
\begin{align} \mathcal{L}(\lambda) &= \left[ \begin{array}{c|c} I_{(\eta + 1)n} & B_1 \\ \hline 0 & C_1 \end{array}
\right] \left[ \begin{array}{c|c} \alpha \Pi_{\eta, P}^{\mathbb{BG}}(\lambda) & L_{\eta}^T \otimes I_n
\\ \hline L_{\epsilon} \otimes I_n & 0 \end{array} \right] \left[ \begin{array}{c|c} I_{(\epsilon +1)n} & 0 \\
\hline B_2 & C_2
\end{array} \right].  \label{block_constr}
\end{align}
\end{itemize}
The matrix pencil $\mathcal{L}(\lambda)$ is explicitly given as
\footnotesize{ $$
\mathcal{L}(\lambda) = \left[ \begin{array}{c|c|c} I_{(\eta + 1)n} & B_{11} & 0 \\[0.1cm]  \hline 0 & C_{11} &
\alpha \mathcal{H}_{\epsilon -
\eta} \\ \hline 0 & C_{21} & 0  \end{array} \right]  \left[ \begin{array}{c|c} \alpha
\Pi_{\eta, P}^{\mathbb{BG}}(\lambda)
& L_{\eta}^T \otimes I_n
\\ \hline L_{\epsilon} \otimes I_n & 0 \end{array} \right]
\left[ \begin{array}{c|c} I_{(\epsilon + 1)n} & 0 \\[0.1cm] \hline
\begin{array}{c|c} B_{11}^{\mathcal{B}} & C_{11}^{\mathcal{B}} \end{array}  & C_{21}^{\mathcal{B}}
\end{array} \right].
$$ }

\normalsize{Since $\Sigma_{\eta,P}^{\mathbb{BG}}(\lambda)$ is block-symmetric by construction,
(\ref{help1}) ensures the block-symmetry of
$\mathcal{L}(\lambda)$ in total. To this, remember that the core part of a pencil in $\mathbb{DG}_{\eta + 1}(P)$ is
always
block-symmetric.
It is easily seen that the conditions (\ref{help1}) are not only sufficient, but also necessary for
$\mathcal{L}(\lambda)$ in
(\ref{block_constr}) to be block-symmetric (recall (\ref{symmetrized_pencil}) and (\ref{ex_DG2})).}

\begin{theorem}[Characterization of $\mathbb{BG}_{\eta + 1}(P)$] \label{thm_char_BG} \ \\
Let $P(\lambda)$ be an $n \times n$ matrix polynomial of degree $k=\epsilon + \eta + 1$ and assume $\eta \leqslant
\epsilon$. Then $\mathbb{BG}_{\eta + 1}(P)$ is a vector space over $\mathbb{R}$ having dimension
$$
 \textnormal{dim} \big( \mathbb{BG}_{\eta + 1}(P) \big) = k \eta n^2 + 1.
$$
Any matrix pencil $\mathcal{L}(\lambda) \in \mathbb{BG}_{\eta + 1}(P)$ may be characterized as
\begin{equation} \footnotesize{   \mathcal{L}(\lambda)= \left[ \begin{array}{c|c|c} I_{(\eta + 1)n} &  B_{11} & 0
\\  \hline 0 &
C_{11}
& \alpha \mathcal{H}_{\epsilon -
\eta} \\ \hline 0 & C_{21} & 0 \end{array} \right]  \left[ \begin{array}{c|c} \alpha
\Pi_{\eta , P}^{\mathbb{BG}}(\lambda)
& L_{\eta}^T \otimes I_n
\\ \hline L_{\epsilon} \otimes I_n & 0 \end{array} \right]
\left[ \begin{array}{c|c} I_{(\epsilon + 1)n} & 0 \\[0.1cm] \hline
\begin{array}{c|c} B_{11}^{\mathcal{B}} & C_{11}^{\mathcal{B}} \end{array} & C_{21}^{\mathcal{B}}
\end{array} \right] } \label{blockansatz_expr1}
\end{equation} \normalsize{with arbitrary matrices $B_{11} \in \mathbb{R}^{(\eta + 1)n \times \eta n}$, $C_{11} \in
\mathbb{R}^{(\epsilon - \eta)n \times
\eta n}, C_{21} \in \mathbb{R}^{\eta n \times \eta n}$ and $\alpha \in \mathbb{R}$. Moreover, unless $\eta =0$,
$\mathbb{BG}_{\eta + 1}(P)$ is a proper subspace of both $\mathbb{DG}_{\eta + 1}(P)$ and $\mathbb{DG}_{k -
\eta}(P)$. }
\end{theorem}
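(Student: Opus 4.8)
The plan is to deduce the theorem from Theorem~\ref{thm_char_DG} together with the construction in Algorithm~2 and the remark, made just above the theorem, that the conditions (\ref{help1}) are necessary and sufficient for a pencil of the form (\ref{block_constr}) to be block-symmetric; almost nothing genuinely new is needed. For the vector-space assertion I would note that block-transposition $A\mapsto A^{\mathcal{B}}$ is a linear involution on $\mathbb{R}_1[\lambda]^{kn\times kn}$, so the block-symmetric pencils form an $\mathbb{R}$-subspace; by Definition~\ref{def_spaceBG}, $\mathbb{BG}_{\eta+1}(P)$ is the intersection of this subspace with $\mathbb{DG}_{\eta+1}(P)$ (a real vector space by Theorem~\ref{thm_char_DG}), equivalently the kernel of the linear map $\mathcal{L}(\lambda)\mapsto\mathcal{L}(\lambda)-\mathcal{L}(\lambda)^{\mathcal{B}}$ restricted to $\mathbb{DG}_{\eta+1}(P)$, hence itself a subspace.

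For the characterization (\ref{blockansatz_expr1}), one inclusion is already in place: multiplying out (\ref{block_constr}) produces the explicit form (\ref{blockansatz_expr1}), which lies in $\mathbb{DG}_{\eta+1}(P)$ by construction and is block-symmetric by (\ref{help1}). For the converse I would take an arbitrary $\mathcal{L}(\lambda)\in\mathbb{BG}_{\eta+1}(P)$, write it in the form (\ref{doubleansatz_expr1}) guaranteed by Theorem~\ref{thm_char_DG} with free data $\alpha,B_{11},C_{11},C_{21},B_2,C_2$, and impose $\mathcal{L}(\lambda)=\mathcal{L}(\lambda)^{\mathcal{B}}$. In the $3\times3$ partition of Figure~\ref{fig1} the core block is block-symmetric already (Theorem~\ref{thm_char_DG}), so block-symmetry of $\mathcal{L}(\lambda)$ amounts to the matching, under block-transposition, of the eight bordering blocks together with block-symmetry of the upper-left $(\eta+1)n\times(\eta+1)n$ diagonal block. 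Tracking how $A\mapsto A^{\mathcal{B}}$ permutes and transposes the nine blocks forces $B_1=[\,B_{11}\ 0\,]$, $C_2=C_{21}^{\mathcal{B}}$ and $B_2=[\,B_{11}^{\mathcal{B}}\ C_{11}^{\mathcal{B}}\,]$ (the necessity asserted after Algorithm~2), and once these hold the remaining freedom in the $(1,1)$-block is a summand in $\mathrm{null}(\Phi)$ of a constrained shape, so requiring that block to be block-symmetric selects precisely the representative assembled from $\Sigma_{\eta,P}^{\mathbb{BG}}(\lambda)$ in Algorithm~2. Hence $\mathcal{L}(\lambda)$ has the form (\ref{blockansatz_expr1}). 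I expect this bookkeeping — the exact action of block-transposition on the partition of Figure~\ref{fig1} and the uniqueness of the block-symmetric $(1,1)$-block compatible with $\Phi(\cdot)=\alpha P(\lambda)$ — to be the only real obstacle; it is essentially the content of the remark preceding Theorem~\ref{thm_char_BG}, with (\ref{symmetrized_pencil}) and (\ref{ex_DG2}) as guiding instances.

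For the dimension I would verify that the parametrization $(\alpha,B_{11},C_{11},C_{21})\mapsto\mathcal{L}(\lambda)$ in (\ref{blockansatz_expr1}) is injective: one recovers $\alpha$ from the coefficient of $\lambda$ in the upper-left block, where it multiplies $P_k\neq0$; then $C_{21}$ from the $(3,1)$-block of the $3\times3$ partition, using injectivity of $C\mapsto C(L_\eta\otimes I_n)$ (compare the form of $L_\eta$, as in the argument after Theorem~\ref{thm_generalspace}); then $B_{11}$ and $C_{11}$ from the remaining bordering blocks. Consequently $\dim\mathbb{BG}_{\eta+1}(P)$ equals the number of free real parameters, $(\eta+1)\eta n^2+(\epsilon-\eta)\eta n^2+\eta^2n^2+1=\eta(\epsilon+\eta+1)n^2+1=k\eta n^2+1$. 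Finally $\mathbb{BG}_{\eta+1}(P)\subseteq\mathbb{DG}_{\eta+1}(P)$ by Definition~\ref{def_spaceBG}, and since double block Kronecker ansatz spaces are unchanged under $\eta\leftrightarrow\epsilon$ one has $\mathbb{DG}_{k-\eta}(P)=\mathbb{DG}_{\epsilon+1}(P)=\mathbb{DG}_{\eta+1}(P)$; comparing dimensions with Theorem~\ref{thm_char_DG}, $k\eta n^2+1<2k\eta n^2+1$ precisely when $\eta\geq1$, which gives the proper-subspace statement (for $\eta=0$ the two spaces are one-dimensional and coincide, consistent with $\mathbb{DG}_1(P)$ consisting entirely of block-symmetric pencils).
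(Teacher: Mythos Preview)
Your proposal is correct and follows exactly the route the paper takes: the paper does not give a separate proof of this theorem but derives it from Algorithm~2 together with the assertion, stated just before Theorem~\ref{thm_char_BG}, that the conditions (\ref{help1}) are both necessary and sufficient for a pencil of the form (\ref{block_constr}) to be block-symmetric, and your write-up simply makes this argument explicit. One cosmetic remark: the structure $B_1=[\,B_{11}\ 0\,]$ is already built into the characterization (\ref{doubleansatz_expr1}) of $\mathbb{DG}_{\eta+1}(P)$ and need not be ``forced'' by block-symmetry; otherwise the bookkeeping, the injectivity argument for the parametrization, the dimension count, and the proper-subspace comparison are all in order.
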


The next results about $\mathbb{BG}_{\eta + 1}(P)$ are immediate consequences of Theorem \ref{master4} and
Corollary
\ref{cor_inclusionDG}.

\begin{corollary}[Linearization Condition for $\mathbb{BG}_{\eta + 1}(P)$] \label{master5} \ \\
Let $P(\lambda)$  be a square and regular matrix polynomial of degree $k= \eta + \epsilon + 1$.
Let $\mathcal{L}(\lambda) \in \mathbb{BG}_{\eta + 1}(P)$ be given in the form (\ref{blockansatz_expr1}).
Assume $\epsilon \neq \eta$. Then the
following statements are equivalent:
\begin{enumerate}
 \item $\mathcal{L}(\lambda)$ is a strong linearization for $P(\lambda)$.
 \item $P_0 \in \textnormal{GL}_n( \mathbb{R}), C_{21} \in \textnormal{GL}_{\eta n}(\mathbb{R})$ and $\alpha \in
\mathbb{R}
\setminus \lbrace 0 \rbrace$.
\end{enumerate}
\end{corollary}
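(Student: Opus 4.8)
The plan is to read the equivalence off from Theorem~\ref{master4}. By Theorem~\ref{thm_char_BG} we have $\mathbb{BG}_{\eta+1}(P)\subseteq\mathbb{DG}_{\eta+1}(P)$, so a pencil $\mathcal{L}(\lambda)$ presented in the block-symmetric form (\ref{blockansatz_expr1}) is in particular an element of $\mathbb{DG}_{\eta+1}(P)$; it therefore possesses a representation of the form (\ref{doubleansatz_expr1}), and the first task is to match its data with the data appearing in (\ref{blockansatz_expr1}).

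The outer factors of (\ref{blockansatz_expr1}) agree with those of (\ref{doubleansatz_expr1}) except for the freely chosen $B_{11}$- and $B_2$-entries, the only genuine difference being that the middle factor carries $\alpha\Pi_{\eta,P}^{\mathbb{BG}}(\lambda)$ instead of $\alpha\Pi_{\eta,P}^{\mathbb{DG}}(\lambda)$. Since $\Phi$ maps both $\Pi_{\eta,P}^{\mathbb{BG}}(\lambda)$ and $\Pi_{\eta,P}^{\mathbb{DG}}(\lambda)$ to $P(\lambda)$, their difference lies in $\textnormal{null}(\Phi)=\mathcal{N}_{\epsilon,\eta}$ and hence has the shape $\widehat{B}_1(L_\epsilon\otimes I_n)+(L_\eta^T\otimes I_n)\widehat{B}_2$ with constant $\widehat{B}_1,\widehat{B}_2$; absorbing the corresponding constant unipotent block-triangular factors into the outer factors on the left and on the right only alters their $B$-blocks and leaves the lower-right blocks $C_{11}$, $C_{21}$ of the left factor and $C_2=C_{21}^{\mathcal{B}}$ of the right factor untouched. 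Thus, read as an element of $\mathbb{DG}_{\eta+1}(P)$, the pencil $\mathcal{L}(\lambda)$ carries exactly the data $\alpha$, $C_{11}$, $C_{21}$ and $C_2=C_{21}^{\mathcal{B}}$. Feeding this into Theorem~\ref{master4} --- admissible because $P(\lambda)$ is square, regular and $\epsilon\neq\eta$ --- yields that $\mathcal{L}(\lambda)$ is a strong linearization for $P(\lambda)$ if and only if $P_0\in\textnormal{GL}_n(\mathbb{R})$, $C_{21}\in\textnormal{GL}_{\eta n}(\mathbb{R})$, $C_{21}^{\mathcal{B}}\in\textnormal{GL}_{\eta n}(\mathbb{R})$ and $\alpha\neq 0$; here the condition $P_0\in\textnormal{GL}_n(\mathbb{R})$ records, exactly as in the remark following Theorem~\ref{master4}, the nonsingularity of $\mathcal{H}_{\epsilon-\eta}(P)$, and for the implication $2.\Rightarrow 1.$ the regularity of $P(\lambda)$ is not used (Theorem~\ref{thm_lincondition}).

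It remains to reconcile the two invertibility requirements --- on $C_{21}$ and on its block-transpose $C_{21}^{\mathcal{B}}$ --- so that statement~2 can be phrased in terms of $C_{21}$ alone; clarifying how these two conditions are related for the core block of a block-symmetric pencil in $\mathbb{DG}_{\eta+1}(P)$ is the step I expect to be the main obstacle of the argument. For $\eta\in\{0,1\}$ it is immediate, since then $C_{21}^{\mathcal{B}}=C_{21}$ (and for $\eta=0$ the block $C_{21}$ disappears altogether); in general one has to exploit the block-symmetric shape (\ref{blockansatz_expr1}) to see that within this family invertibility of $C_{21}$ already governs invertibility of $C_{21}^{\mathcal{B}}$. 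Once this point is settled the two statements are equivalent, and --- since $2.\Rightarrow 1.$ needs no regularity hypothesis --- one reads off in passing, exactly as in Corollary~\ref{generic1}, that almost every pencil in $\mathbb{BG}_{\eta+1}(P)$ is a strong linearization for $P(\lambda)$ as soon as $0$ is not an eigenvalue of $P(\lambda)$.
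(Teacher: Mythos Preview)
Your route --- specialize Theorem~\ref{master4} with $C_2=C_{21}^{\mathcal B}$ --- is precisely what the paper does: the result is declared an ``immediate consequence'' of Theorem~\ref{master4} and no further argument is supplied. Your reduction of the $\Pi^{\mathbb{BG}}$-middle factor to the $\Pi^{\mathbb{DG}}$-form via $\mathcal N_{\epsilon,\eta}$ is a clean way to make the match explicit, and is more careful than the paper itself.

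The step you isolate as the obstacle, however, is a genuine gap --- and it does \emph{not} close. Block-transposition does not preserve invertibility: with $\eta=n=2$, take
\[
C_{21}=\begin{bmatrix}1&0&0&0\\0&0&1&0\\0&1&0&0\\0&0&0&1\end{bmatrix}
\quad\text{(a permutation)},\qquad
C_{21}^{\mathcal B}=\begin{bmatrix}1&0&0&1\\0&0&0&0\\0&0&0&0\\1&0&0&1\end{bmatrix}
\quad\text{(rank one)}.
\]
Since $C_{21}$ is a \emph{free} parameter in (\ref{blockansatz_expr1}) --- the block-symmetric description imposes no constraint on it --- nothing rules this out. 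With such a choice the right factor in (\ref{blockansatz_expr1}) is singular, hence $\det \mathcal L(\lambda)\equiv 0$, and $\mathcal L(\lambda)$ cannot be a linearization of the regular $P(\lambda)$; yet condition~2 of the corollary is met. Thus $1.\Rightarrow 2.$ is fine (regularity of $\mathcal L(\lambda)$ forces the \emph{left} factor to be invertible, whence $C_{21}\in\textnormal{GL}_{\eta n}(\mathbb R)$), but $2.\Rightarrow 1.$ fails as written. The honest version of statement~2 must keep both $C_{21}\in\textnormal{GL}_{\eta n}(\mathbb R)$ and $C_{21}^{\mathcal B}\in\textnormal{GL}_{\eta n}(\mathbb R)$; the paper has simply glossed over this, and your instinct to flag it was correct.
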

For $\epsilon = \eta$ the equivalence in Corollary \ref{master5} holds without the condition $P_0 \in \textnormal{GL}_n( \mathbb{R})$ in the second statement (due to the disappearance of the $\mathcal{H}$-block).
In this case, the implication $2. \Rightarrow 1.$ holds also for singular matrix polynomials according to Theorem
\ref{thm_lincondition}.
Moreover, certainly  Corollary \ref{generic1} still holds. That is, whenever zero is not an eigenvalue of
$P(\lambda)$, i.e.,
$P_0 \in
\textnormal{GL}_n( \mathbb{R})$, almost every matrix pencil in $\mathbb{BG}_{\eta + 1}(P)$ is a strong
linearization for
$P(\lambda)$ regardless whether $P(\lambda)$ is regular or singular. Moreover, the inclusion property from the
previous section
is still valid for block-symmetric pencils.

\begin{lemma}[Inclusion Property for $\mathbb{BG}_{\eta + 1}(P)$ Spaces] \ \\
Let $P(\lambda)$ be an $n \times n$ matrix polynomial of degree $k= \epsilon + \eta + 1$.
Then we have
 \begin{equation}
  \mathbb{BG}_1(P) \, \subsetneqq \, \mathbb{BG}_2(P) \, \subsetneqq \, \cdots \, \subsetneqq \,
\mathbb{BG}_{\lceil
\tfrac{k}{2} \rceil}(P). \label{sequenceBG}
 \end{equation}
\end{lemma}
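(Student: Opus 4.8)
The plan is to obtain this chain as a direct consequence of the analogous chain for the double block Kronecker ansatz spaces (Corollary~\ref{cor_inclusionDG}), exploiting that block-symmetry is a property of a $kn\times kn$ pencil that is insensitive to the choice of natural partition. All of the spaces $\mathbb{DG}_j(P)$ consist of $kn\times kn$ matrix pencils, and the block-transpose $(\cdot)^{\mathcal{B}}$ is taken throughout with respect to the partition into $k^2$ blocks of size $n\times n$; hence the set $\mathcal{S}$ of all block-symmetric $kn\times kn$ pencils is one fixed subspace (block-transposition is linear, so its fixed-point set is a subspace), and by Definition~\ref{def_spaceBG} we have $\mathbb{BG}_{\eta+1}(P)=\mathbb{DG}_{\eta+1}(P)\cap\mathcal{S}$ for every admissible $\eta$. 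First I would take the inclusion chain of Corollary~\ref{cor_inclusionDG} and intersect each term with $\mathcal{S}$; since intersecting a chain of subspaces with a fixed subspace preserves all the inclusions, this produces exactly $\mathbb{BG}_1(P)\subseteq\mathbb{BG}_2(P)\subseteq\cdots\subseteq\mathbb{BG}_{\lceil k/2\rceil}(P)$. Spelled out elementwise: if $\mathcal{L}(\lambda)\in\mathbb{BG}_{\eta+1}(P)$, then $\mathcal{L}(\lambda)\in\mathbb{DG}_{\eta+1}(P)\subseteq\mathbb{DG}_{\eta+2}(P)$ by Corollary~\ref{cor_inclusionDG}, while $\mathcal{L}(\lambda)=\mathcal{L}(\lambda)^{\mathcal{B}}$ continues to hold, so $\mathcal{L}(\lambda)\in\mathbb{BG}_{\eta+2}(P)$.

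To upgrade each inclusion to a proper one I would invoke the dimension formula of Theorem~\ref{thm_char_BG}, namely $\textnormal{dim}(\mathbb{BG}_{\eta+1}(P))=k\eta n^2+1$. This is strictly increasing in $\eta$, because the standing assumption $\textnormal{deg}(P)\geq 2$ gives $k\geq 2$, so $kn^2>0$ and $\textnormal{dim}(\mathbb{BG}_{\eta+2}(P))-\textnormal{dim}(\mathbb{BG}_{\eta+1}(P))=kn^2>0$; therefore $\mathbb{BG}_{\eta+1}(P)\subsetneqq\mathbb{BG}_{\eta+2}(P)$ for every $\eta$ in the relevant range. If an explicit witness is preferred, one can instead feed a nonzero parameter $C_{21}\in\mathbb{R}^{(\eta+1)n\times(\eta+1)n}$ into Algorithm~2 with $\widetilde{\eta}=\eta+1$ and check against the characterization~(\ref{blockansatz_expr1}) of $\mathbb{BG}_{\eta+1}(P)$ that the resulting pencil does not lie in $\mathbb{BG}_{\eta+1}(P)$.

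The only genuinely delicate point, and hence the main obstacle, is making sure that the block-symmetry condition is literally the same notion in $\mathbb{BG}_{\eta+1}(P)$ and in $\mathbb{BG}_{\eta+2}(P)$. This amounts to observing that the superpartition mechanism underlying Corollary~\ref{cor_inclusionDG} (Theorem~\ref{thm_interpolation}) only re-groups the $n\times n$ blocks into a different $2\times 2$ natural partition; it does not alter the underlying $n\times n$ blocking itself. Consequently $\mathcal{L}(\lambda)=\mathcal{L}(\lambda)^{\mathcal{B}}$ is a partition-free statement that cannot be spoiled by reinterpreting $\mathcal{L}(\lambda)$ as an element of a larger space. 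Once this is noted, the lemma follows without any further computation.
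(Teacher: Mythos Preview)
Your proposal is correct and follows essentially the same approach as the paper: the paper states that this lemma is an immediate consequence of Corollary~\ref{cor_inclusionDG} (together with the surrounding material, in particular Theorem~\ref{thm_char_BG}), and you have simply made explicit the two ingredients---that intersecting the $\mathbb{DG}$-chain with the fixed subspace of block-symmetric pencils yields the $\mathbb{BG}$-chain, and that strictness follows from the dimension formula $\dim(\mathbb{BG}_{\eta+1}(P))=k\eta n^2+1$. Your remark that block-symmetry is taken with respect to the $n\times n$ blocking, and is therefore unaffected by the change of natural partition in Theorem~\ref{thm_interpolation}, is exactly the point the paper leaves implicit.
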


To illustrate the construction procedure from Algorithm 2 consider the following simple example.

\begin{example} \label{ex_blocksymmpencils}
 Let $P(\lambda) = \sum_{i=0}^7 P_i \lambda ^i$ be an $n \times n$ matrix polynomial of degree
$\textnormal{deg}(P)=7$.
 First consider the case $\eta = 1$ and $\epsilon = k - \eta - 1 = 5$. The construction procedure easily gives
 $$ \mathcal{H}_4 = \begin{bmatrix}-P_3 & -P_2 & - P_1 & -P_0 \\ -P_2 & -P_1 & -P_0 & \\ -P_1 & -P_0 & & \\ -P_0 &
& &
 \end{bmatrix} \in \mathbb{R}^{4 n \times 4n}$$
 and $\Pi^{\mathbb{BG}}_{1, P}(\lambda) = \begin{bmatrix} \lambda P_7 + P_6 & 0_n & 0_n & 0_n & 0_n &
0_n \\  0_n & \lambda P_5 + P_4 & P_3 & P_2 & P_1 & P_0 \end{bmatrix} $. Choose $B_{11} = 0$, $C_{11} = 0$ and
$C_{21} = I_n$.
Then computing $\mathcal{L}(\lambda)$ from (\ref{block_constr}) with $\alpha = 1$ yields
$$ \small{ \mathcal{L}(\lambda) = \left[ \begin{array}{cc|cccc:c} \lambda P_7 + P_6 & 0 & 0 &
0 & 0 & 0 &
-I_n \\
0 & \lambda P_5 + P_4 &
P_3 & P_2 & P_1 & P_0 & \lambda I_n \\ \hdashline 0 & P_3 & P_2 - \lambda P_3 & P_1 - \lambda P_2 & P_0 - \lambda
P_1 & - \lambda P_0 & 0 \\ 0 & P_2 & P_1 - \lambda P_2 & P_0 - \lambda P_1 & - \lambda P_0 & 0 & 0\\ 0 & P_1 & P_0
- \lambda P_1
& -
\lambda P_0 & 0 & 0 & 0 \\ 0 & P_0 & - \lambda P_0 & 0 & 0 & 0  & 0 \\ \hline -I_n & \lambda I_n & 0& 0 & 0 & 0 & 0
\end{array}
\right]}
$$
which is indeed a block-symmetric $7n \times 7n$ matrix pencil. Thus $\mathcal{L}(\lambda) \in \mathbb{BG}_2(P)$.
Note
that the choice of $B_{11}$ and $C_{11}$ has no influence on $\mathcal{L}(\lambda)$ for being a linearization. In
fact,
the nonsingularity of $P_0$ and $C_{21}$ is the decisive factor, while choosing $B_{11}$ and $C_{11}$ to be singular
matrices does
not affect the linearization property of $\mathcal{L}(\lambda)$ at all.

Now consider $\eta = 2$ and $\epsilon = k - \eta - 1=4$. Then
$$ \mathcal{H}_2 = \begin{bmatrix} - P_1 & -P_0 \\ -P_0 & 0 \end{bmatrix} \in \mathbb{R}^{2n \times 2n}.$$
Now choose $C_{11}= \begin{bmatrix} -P_7 & -P_6 \\ -P_5 & -P_4 \end{bmatrix}$ and $C_{21} = \begin{bmatrix} -P_3 &
-P_2 \\ -P_1 &
-P_0 \end{bmatrix}$. The computation in (\ref{block_constr}) gives
\footnotesize{ $$  \mathcal{K}(\lambda) = \left[ \begin{array}{ccc|cc:cc} \lambda P_7 + P_6 & 0 & 0
&
P_7 & P_5 &
P_3 & P_1 \\
                           0 & \lambda P_5 + P_4 & 0 & P_6 - \lambda P_7 & P_4 - \lambda P_5 & P_2
- \lambda P_3
& P_0 - \lambda P_1 \\ 0 & 0 & \lambda P_3 + P_2 & P_1 - \lambda P_6 & P_0 - \lambda P_4 & - \lambda P_2 & -
\lambda P_0
\\ \hdashline
P_7 & P_6 - \lambda P_7 & P_1 - \lambda P_6 & P_0 - \lambda P_1 & - \lambda P_0 & 0 & 0 \\
P_5 & P_4 - \lambda P_5 & P_0 - \lambda P_4 & - \lambda P_0 & 0 & 0 & 0 \\ \hline
P_3 & P_2 - \lambda P_3 & - \lambda P_2 & 0 & 0 & 0 & 0 \\
P_1 & P_0 - \lambda P_1 & - \lambda P_0 & 0 & 0 & 0 & 0 \end{array} \right]  $$ }
\normalsize{which is block-symmetric. Therefore we have $\mathcal{K}(\lambda) \in \mathbb{BG}_3(P)$.}
\end{example}

\begin{remark}
Consider  $\mathcal{L}(\lambda)$ and $\mathcal{K}(\lambda)$  from the last example.
 $\mathcal{L}(\lambda)$ is a strong linearization for $P(\lambda)$ if and only if $\textnormal{det}(P_0) \neq 0,$
whereas $\mathcal{K}(\lambda)$ is a strong linearization for $P(\lambda)$ if and only if $\textnormal{det}(P_0),
\textnormal{det}(P_1), \textnormal{det}(P_2) \neq 0$ (see
Theorem \ref{thm_lincondition}). Neither the classical ansatz space approach (see \cite{MacMMM06}) nor the pure
block Kronecker pencils from \cite{DopLPVD16} cover block-symmetric pencils like these.
\end{remark}


\section{Block Kronecker Ansatz Spaces and the Classical Ansatz Spaces}
\label{sec:L1L2}

As this was pointed out before, there is a strong connection between the classical ansatz spaces $\mathbb{L}_1(P), \mathbb{L}_2(P)$ and $\mathbb{DL}(P)$ and the block Kronecker ansatz spaces introduced in this paper. This section is devoted to the establishment of this connection.

Let $P(\lambda)$ be an $n \times n$ matrix polynomial of degree $k$. For $\eta = 0$ the ansatz equation (\ref{ansatzequation1})
has the form
$$ \mathcal{L}(\lambda) \big( \Lambda_{k-1}(\lambda) \otimes I_n) = \alpha e_1 \otimes P(\lambda)$$
which coincides with the ansatz equation for $\mathbb{L}_1(P)$ (see (3.4) in \cite{MacMMM06}) for the choice $v = \alpha e_1$. According to Theorem \ref{thm_generalspace} 
every matrix pencil
$\mathcal{L}(\lambda)$ in $\mathbb{G}_1(P)$ may be expressed as
$$  \mathcal{L}(\lambda) = \left[ \begin{array}{c|c} I_n & B_1 \\ \hline 0 & C_1 \end{array} \right]
\left[
\begin{array}{c} \alpha \Sigma_{0,P}(\lambda) \\ \hline  L_{k-1}(\lambda) \otimes I_n \end{array} \right] = \left[
\begin{array}{c|c} I_n & B_1 \\ \hline 0 & C_1 \end{array} \right] \mathcal{F}_{\alpha, 0, P}(\lambda).
$$
Multiplying $\mathcal{L}(\lambda)$ from the left with
$$ \mathcal{V}_{\textnormal{left}} = \left[ \begin{array}{c|c} v \otimes I_n & \begin{array}{c} 0_{n \times
\epsilon n} \\ \hline
I_{\epsilon n} \end{array} \end{array} \right] \in \mathbb{R}^{(\epsilon + 1)n \times (\epsilon + 1)n}$$
 gives a pencil that satisfies $\mathcal{L}(\lambda) ( \Lambda_{k-1}(\lambda) \otimes I_n) = v \otimes P(\lambda)$  (due to the multiplication with $v \in \mathbb{R}^k$, the scalar $\alpha \in \mathbb{R}$ is ignored until
further
notice, i.e. we set $\alpha = 1$). On the other hand it is easily seen, that any matrix pencil of the form
\begin{equation}  \mathcal{L}(\lambda) = \left[ \begin{array}{c|c} v \otimes I_n & \begin{array}{c} B_1
\\ \hline
C_1 \end{array} \end{array} \right]  \mathcal{F}_{1,0,P}(\lambda) = \left[ \begin{array}{c|c} v \otimes I_n & \begin{array}{c} B_1
\\ \hline
C_1 \end{array} \end{array} \right]  \textnormal{Frob}_P(\lambda) \label{L1_compactform} \end{equation}
satisfies $\mathcal{L}(\lambda)( \Lambda_{k-1}(\lambda) \otimes I_n) = v \otimes P(\lambda)$. Now, verifying
that (\ref{L1_compactform}) is essentially just a reformulation of \cite[Thm. 3.5]{MacMMM06}, we have derived an
equivalent, but alternative description of $\mathbb{L}_1(P)$. In the context of orthogonal bases, this result was
already
obtained in \cite{FassS16}.

\begin{corollary}[Characterization of $\mathbb{L}_1(P)$] \label{cor_spaceL1} \ \\
 Let $P(\lambda)$ be an $n \times n$ matrix polynomial of degree $k$.
Then $\mathcal{L}(\lambda)$ satisfies the
classical ansatz equation $\mathcal{L}(\lambda)( \Lambda_{k-1} \otimes I_n) = v \otimes P(\lambda)$ if and
only if
\begin{equation}
 \mathcal{L}(\lambda) = \big[ \, v \otimes I_n \; \mathcal{Z} \, \big] \textnormal{Frob}_P(\lambda)
\qquad \mathcal{Z}= \left[ \begin{array}{c} B_1 \\ \hline C_1 \end{array} \right]
\label{L1}
\end{equation}
for some arbitrary matrix $\mathcal{Z} \in \mathbb{R}^{kn \times (k-1)n}$.
\end{corollary}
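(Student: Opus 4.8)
The plan is to prove both implications directly, relying on the identity $\textnormal{Frob}_P(\lambda) = \mathcal{F}_{1,0,P}(\lambda)$ together with $\textnormal{Frob}_P(\lambda)(\Lambda_{k-1}(\lambda) \otimes I_n) = e_1 \otimes P(\lambda)$. The latter holds because the leading $n$ block rows of $\textnormal{Frob}_P(\lambda)$ are $\Sigma_{0,P}(\lambda)$, for which $\Sigma_{0,P}(\lambda)(\Lambda_{k-1}(\lambda) \otimes I_n) = \Phi(\Sigma_{0,P}(\lambda)) = P(\lambda)$, while the remaining $(k-1)n$ block rows are $L_{k-1}(\lambda) \otimes I_n$ and $(L_{k-1}(\lambda) \otimes I_n)(\Lambda_{k-1}(\lambda) \otimes I_n) = 0$.

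For the ``if'' part I would just multiply from the right: for any $\mathcal{Z} \in \mathbb{R}^{kn \times (k-1)n}$,
\[
\big[\, v \otimes I_n \;\; \mathcal{Z}\,\big]\,\textnormal{Frob}_P(\lambda)\,(\Lambda_{k-1}(\lambda) \otimes I_n)
= \big[\, v \otimes I_n \;\; \mathcal{Z}\,\big]\,(e_1 \otimes P(\lambda))
= (v \otimes I_n)P(\lambda) = v \otimes P(\lambda),
\]
so every pencil of the form (\ref{L1}) satisfies the classical ansatz equation.

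For the ``only if'' part, I would first observe that $\mathcal{L}_v(\lambda) := [\, v \otimes I_n \;\; 0\,]\,\textnormal{Frob}_P(\lambda)$ is, by the same computation, a particular solution of $\mathcal{L}(\lambda)(\Lambda_{k-1}(\lambda) \otimes I_n) = v \otimes P(\lambda)$. Hence, given any solution $\mathcal{L}(\lambda)$, the difference $\mathcal{K}(\lambda) := \mathcal{L}(\lambda) - \mathcal{L}_v(\lambda)$ is a $kn \times kn$ matrix pencil satisfying $\mathcal{K}(\lambda)(\Lambda_{k-1}(\lambda) \otimes I_n) = 0$. Applying Lemma \ref{lem_nullspace} with $\kappa_1 = \kappa_2 + 1 = k$ and $m = n$ yields $\mathcal{K}(\lambda) = \mathcal{Z}(L_{k-1}(\lambda) \otimes I_n)$ for some constant $\mathcal{Z} \in \mathbb{R}^{kn \times (k-1)n}$. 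Since the trailing $(k-1)n$ block rows of $\textnormal{Frob}_P(\lambda)$ are precisely $L_{k-1}(\lambda) \otimes I_n$, one has $\mathcal{Z}(L_{k-1}(\lambda) \otimes I_n) = [\, 0 \;\; \mathcal{Z}\,]\,\textnormal{Frob}_P(\lambda)$, and therefore $\mathcal{L}(\lambda) = \mathcal{L}_v(\lambda) + \mathcal{K}(\lambda) = [\, v \otimes I_n \;\; \mathcal{Z}\,]\,\textnormal{Frob}_P(\lambda)$, which is (\ref{L1}) after splitting $\mathcal{Z}$ into its first $n$ and last $(k-1)n$ block rows $B_1$ and $C_1$; this matches the parametrisation of $\mathbb{G}_1(P)$ in Theorem \ref{thm_generalspace}.

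The two Kronecker-product identities are routine. The one point that needs care is that, in contrast with the description of $\mathbb{G}_1(P)$ in Theorem \ref{thm_generalspace} (which corresponds to the special choice $v = \alpha e_1$), the statement must allow an arbitrary $v \in \mathbb{R}^k$. The left-multiplication by $\mathcal{V}_{\textnormal{left}}$ sketched before the corollary maps an $e_1$-solution to a $v$-solution but is invertible only when the first component of $v$ is nonzero, so it does not by itself establish surjectivity onto all $v$-solutions; the particular-solution argument above, based on Lemma \ref{lem_nullspace}, circumvents this obstacle and treats every $v$ uniformly.
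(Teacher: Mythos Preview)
Your proof is correct. Both implications are handled cleanly, and the application of Lemma~\ref{lem_nullspace} with $\kappa_1=k$, $\kappa_2=k-1$, $m=n$ is exactly right; the identification $\mathcal{Z}(L_{k-1}(\lambda)\otimes I_n)=[\,0\;\;\mathcal{Z}\,]\,\textnormal{Frob}_P(\lambda)$ then closes the loop.

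The route differs from the paper's. For the ``only if'' direction the paper does not argue internally: after observing that pencils of the form $[\,v\otimes I_n\;\;\mathcal{Z}\,]\,\textnormal{Frob}_P(\lambda)$ satisfy the ansatz equation, it simply notes that this form is a reformulation of \cite[Thm.~3.5]{MacMMM06} and thereby inherits the converse from the known characterization of $\mathbb{L}_1(P)$. Your argument is self-contained: it reduces the converse to the nullspace description furnished by Lemma~\ref{lem_nullspace}, with no appeal to \cite{MacMMM06}. Your closing remark is also well taken: the paper's preliminary construction via left-multiplication by $\mathcal{V}_{\textnormal{left}}$ only produces $v$-solutions from $\alpha e_1$-solutions and is invertible only when the first entry of $v$ is nonzero, so by itself it does not establish the converse for general $v$; the paper tacitly relies on \cite{MacMMM06} to fill that gap, whereas your particular-solution argument treats every $v$ uniformly. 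The trade-off is that the paper's route highlights the connection to the classical $\mathbb{L}_1(P)$ theory, while yours keeps everything inside the block Kronecker framework developed here.
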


The characterization in (\ref{L1}) together with Theorem \ref{thm_lincondition} yields a very simple linearization
condition
for pencils in $\mathbb{L}_1(P)$ for regular matrix polynomials $P(\lambda)$ that is equivalent to but different from the well known $Z$-rank condition (see \cite[Cor. 2]{FassS16}).
\begin{corollary}\label{lemma_neu}
A matrix pencil $\mathcal{L}(\lambda) \in
\mathbb{L}_1(P)$ as in (\ref{L1}) is a strong linearization for a regular $P(\lambda) = \sum_{i=0}^k P_i\lambda^i \in
\mathbb{R}[\lambda]^{n \times n}$  with $P_k \neq 0$ if and only if $ [ \, v \otimes I_n \; \mathcal{Z} \, ]$ is a
nonsingular matrix, i.e. $\textnormal{rank}([ \, v \otimes I_n \; \mathcal{Z} \, ])=kn.$
\end{corollary}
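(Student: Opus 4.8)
The plan is to read everything off the factorization (\ref{L1}). Put $M := [\, v \otimes I_n \; \mathcal{Z} \,] \in \mathbb{R}^{kn \times kn}$, so that (\ref{L1}) says $\mathcal{L}(\lambda) = M\,\textnormal{Frob}_P(\lambda)$. Since $P_k \neq 0$ we have $\textnormal{deg}(P)=k$, hence (see Section \ref{sec2}) $\textnormal{Frob}_P(\lambda)$ is a strong linearization for $P(\lambda)$, whether or not $P(\lambda)$ is regular. Thus the statement reduces to deciding when left-multiplication of $\textnormal{Frob}_P(\lambda)$ by the constant square matrix $M$ preserves the strong linearization property.

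First I would settle the ``if'' direction. If $M$ is nonsingular, then $\mathcal{L}(\lambda) = M\,\textnormal{Frob}_P(\lambda)\,I_{kn}$ is strictly equivalent to $\textnormal{Frob}_P(\lambda)$; as strict equivalence preserves strong linearizations, $\mathcal{L}(\lambda)$ is a strong linearization for $P(\lambda)$. (Regularity of $P(\lambda)$ is not needed here.)

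For the converse I would use a determinant count. Assume $\mathcal{L}(\lambda)$ is a strong linearization for the regular polynomial $P(\lambda)$. By (\ref{def_linearization}) there are unimodular $U(\lambda), V(\lambda)$ with $U(\lambda)\mathcal{L}(\lambda)V(\lambda) = I_{(k-1)n} \oplus P(\lambda)$; taking determinants and noting that $\det U(\lambda)$ and $\det V(\lambda)$ are nonzero constants gives $\det\mathcal{L}(\lambda) = c\,\det P(\lambda)$ with $c \in \mathbb{R}\setminus\lbrace 0 \rbrace$, so $\mathcal{L}(\lambda)$ is a regular pencil because $P(\lambda)$ is regular. On the other hand $\det\mathcal{L}(\lambda) = \det(M)\,\det(\textnormal{Frob}_P(\lambda))$, and $\det(\textnormal{Frob}_P(\lambda)) \not\equiv 0$ since $\textnormal{Frob}_P(\lambda)$, being a strong linearization of the regular $P(\lambda)$, is itself regular. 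Hence $\det\mathcal{L}(\lambda) \not\equiv 0$ forces $\det M \neq 0$, i.e.\ $M = [\, v \otimes I_n \; \mathcal{Z}\,]$ is nonsingular, which for the $kn \times kn$ matrix $M$ is the same as $\textnormal{rank}([\, v \otimes I_n\; \mathcal{Z}\,]) = kn$.

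There is no genuine obstacle once (\ref{L1}) is available; the only points to watch are that $[\, v \otimes I_n\;\mathcal{Z}\,]$ really is square of size $kn \times kn$ (the $n$ columns of $v \otimes I_n$ together with the $(k-1)n$ columns of $\mathcal{Z}$), so that ``nonsingular'' and ``rank $kn$'' coincide, and that $P_k \neq 0$ is exactly what licenses the use of the strong linearization property of $\textnormal{Frob}_P(\lambda)$. Alternatively, the converse can be extracted from Theorem \ref{thm_lincondition} and Remark \ref{rem_linequiv} by viewing $\mathcal{L}(\lambda)$ as an element of $\mathbb{G}_1(P)$, but the determinant argument is shorter and entirely self-contained.
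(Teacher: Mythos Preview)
Your proof is correct and aligns with the paper's approach: the paper does not give a detailed argument but simply remarks that the corollary follows from the factorization (\ref{L1}) together with Theorem \ref{thm_lincondition} (and, implicitly, Remark \ref{rem_linequiv} for the converse in the regular case). Your ``if'' direction via strict equivalence is exactly the content of the proof of Theorem \ref{thm_lincondition} specialized to $\eta=0$, and your determinant argument for the converse is precisely the reasoning behind Remark \ref{rem_linequiv}, only written out explicitly rather than invoked. The explicit determinant count you give is slightly more self-contained than the paper's cross-reference, but the underlying idea---regularity of $\mathcal{L}(\lambda)$ forces nonsingularity of the constant factor $M$---is identical.
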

In this case, the eigenvectors of $\mathcal{L}(\lambda)$ are exactly the eigenvectors of
$\textnormal{Frob}_P(\lambda)$ (see \cite[Thm. 3.8]{MacMMM06}).

A similar characterization of $\mathbb{L}_2(P)$ can be derived in an analogous way \cite[Thm. 2]{FassS16}.
Therefore, we
obtain that
$\mathbb{L}_2(P)$ consists of all matrix pencils $\mathcal{L}(\lambda)$ having the form
\begin{equation}
 \mathcal{L}(\lambda) = \text{Frob}^{\mathcal{B}}_P(\lambda) \left[ \begin{array}{c} v^T \otimes I_n \\
\hline \mathcal{Z}
\end{array} \right] \label{L2}
\end{equation}
for some arbitrary matrix $\mathcal{Z} = [ \, B_1 \; | \; C_1 \, ] \in \mathbb{R}^{(k-1)n \times kn}$.
These matrix
pencils satisfy the (second) classical ansatz equation $(\Lambda_{k-1}^T \otimes I_n) \mathcal{L}(\lambda) =
v^T \otimes
P(\lambda)$.
 Similar as before, (\ref{L2}) can be seen as
a reformulation of \cite[Lemma 3.11]{MacMMM06} and we
obtain statements analogous to Corollaries \ref{cor_spaceL1} and \ref{lemma_neu}.

The ansatz space $\mathbb{DL}(P)$ was introduced in \cite{MacMMM06} as the intersection of $\mathbb{L}_1(P)$ and $\mathbb{L}_2(P)$.
As the final result of this section we state the following lemma that connects the three kinds of ansatz spaces introduced in this paper and the
$\mathbb{DL}(P)$ space.

\begin{lemma}
Let $P(\lambda)$ be an $n \times n$ matrix polynomial of degree $k \geq 2$. Then
$$ \bigcap_{\eta = 0}^{k-1} \mathbb{G}_{\eta + 1}(P) = \bigcap_{\eta = 0}^{k-1} \mathbb{DG}_{\eta + 1}(P) = \bigcap_{\eta = 0}^{k-1} \mathbb{BG}_{\eta + 1}(P) = \mathbb{DL}(P)|_{\langle e_1 \rangle}.$$
Here $\langle e_1 \rangle$ denotes the one-dimensional subspace of $\mathbb{R}^k$ spanned by $e_1$.
\end{lemma}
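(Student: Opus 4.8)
The plan is to show that all three iterated intersections collapse onto the one–dimensional space $\mathbb{DG}_1(P)=\mathbb{G}_1(P)\cap\mathbb{G}_k(P)$, and then to identify this space with $\mathbb{DL}(P)|_{\langle e_1\rangle}$. First I would note that $\mathbb{G}_1(P)$ and $\mathbb{G}_k(P)$ both occur among the spaces intersected in $\bigcap_{\eta=0}^{k-1}\mathbb{G}_{\eta+1}(P)$ and that, by the definition of $\mathbb{DG}_1(P)$ (take $\eta=0$, $\epsilon=k-1$), $\mathbb{G}_1(P)\cap\mathbb{G}_k(P)=\mathbb{DG}_1(P)$; hence $\bigcap_{\eta=0}^{k-1}\mathbb{G}_{\eta+1}(P)\subseteq\mathbb{DG}_1(P)$. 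By Theorem \ref{thm_char_DG} and Theorem \ref{thm_char_BG} evaluated at $\eta=0$, both $\mathbb{DG}_1(P)$ and $\mathbb{BG}_1(P)$ have dimension $1$; since $\mathbb{BG}_1(P)\subseteq\mathbb{DG}_1(P)$ and neither space is trivial, they coincide. (Equivalently, the essentially unique pencil in $\mathbb{DG}_1(P)$ exhibited in (\ref{ex_DG}) is block-symmetric and therefore already lies in $\mathbb{BG}_1(P)$.)

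Next I would exploit $\mathbb{DG}_{\eta+1}(P)=\mathbb{DG}_{k-\eta}(P)$ and $\mathbb{BG}_{\eta+1}(P)=\mathbb{BG}_{k-\eta}(P)$: letting $\eta$ run over $0,\dots,k-1$ produces the same families as letting it run over $0,\dots,\lfloor\tfrac{k-1}{2}\rfloor$, so by the ascending chains (\ref{sequenceDG}) and (\ref{sequenceBG}) we get $\bigcap_{\eta}\mathbb{DG}_{\eta+1}(P)=\mathbb{DG}_1(P)$ and $\bigcap_{\eta}\mathbb{BG}_{\eta+1}(P)=\mathbb{BG}_1(P)$. In particular, (\ref{sequenceBG}) gives $\mathbb{BG}_1(P)\subseteq\mathbb{BG}_{\eta+1}(P)$ for every $\eta$, so $\mathbb{DG}_1(P)=\mathbb{BG}_1(P)\subseteq\bigcap_{\eta}\mathbb{BG}_{\eta+1}(P)$. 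Combining this with the trivial inclusions $\mathbb{BG}_{\eta+1}(P)\subseteq\mathbb{DG}_{\eta+1}(P)\subseteq\mathbb{G}_{\eta+1}(P)$, which yield $\bigcap_{\eta}\mathbb{BG}_{\eta+1}(P)\subseteq\bigcap_{\eta}\mathbb{DG}_{\eta+1}(P)\subseteq\bigcap_{\eta}\mathbb{G}_{\eta+1}(P)$, and with the inclusion $\bigcap_{\eta}\mathbb{G}_{\eta+1}(P)\subseteq\mathbb{DG}_1(P)$ established above, closes a cycle of inclusions and forces all three iterated intersections to equal $\mathbb{DG}_1(P)$. (As an alternative, the inclusion $\mathbb{DG}_1(P)\subseteq\mathbb{G}_{\eta+1}(P)$ for all $\eta$ also follows directly from the superpartition property, Theorem \ref{thm_interpolation}, applied with $\eta=0$.)

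It then remains to show $\mathbb{DG}_1(P)=\mathbb{DL}(P)|_{\langle e_1\rangle}$. For $\eta=0$ (so $\epsilon=k-1$) the block Kronecker ansatz equation (\ref{ansatzequation1}) degenerates, because $\Lambda_0(\lambda)=1$ and the accompanying identity blocks are empty, to $\mathcal{L}(\lambda)\big(\Lambda_{k-1}(\lambda)\otimes I_n\big)=\alpha\,e_1\otimes P(\lambda)$, i.e.\ to the $\mathbb{L}_1(P)$ ansatz equation with right ansatz vector $v=\alpha e_1$; dually, for $\eta=k-1$ (so $\epsilon=0$) it degenerates to $\big(\Lambda_{k-1}(\lambda)^T\otimes I_n\big)\mathcal{L}(\lambda)=\alpha\,e_1^T\otimes P(\lambda)$, the $\mathbb{L}_2(P)$ ansatz equation with left ansatz vector $\alpha e_1$ (both reductions are recorded at the beginning of Section \ref{sec:L1L2}). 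Consequently $\mathbb{G}_1(P)$ is exactly the set of pencils lying in $\mathbb{L}_1(P)$ with right ansatz vector in $\langle e_1\rangle$, $\mathbb{G}_k(P)$ is exactly the set of pencils lying in $\mathbb{L}_2(P)$ with left ansatz vector in $\langle e_1\rangle$, and therefore $\mathbb{DG}_1(P)=\mathbb{G}_1(P)\cap\mathbb{G}_k(P)$ consists precisely of the pencils in $\mathbb{DL}(P)=\mathbb{L}_1(P)\cap\mathbb{L}_2(P)$ whose right and left ansatz vectors both belong to $\langle e_1\rangle$. Since for any pencil in $\mathbb{DL}(P)$ its right and left ansatz vectors necessarily agree (\cite{HigMMT06,MacMMM06}), this set is exactly $\mathbb{DL}(P)|_{\langle e_1\rangle}$, which would complete the argument.

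I expect the last step to be the main obstacle. One has to carry out the Kronecker-product bookkeeping carefully enough to be sure that (\ref{ansatzequation1}) really does reduce to the two classical ansatz equations at the extreme values $\eta=0$ and $\eta=k-1$, and then invoke the \emph{common ansatz vector} property of $\mathbb{DL}(P)$ to conclude that $\mathbb{G}_1(P)\cap\mathbb{G}_k(P)$ is $\mathbb{DL}(P)|_{\langle e_1\rangle}$ and nothing larger; a useful sanity check is that $\textnormal{dim}(\mathbb{DG}_1(P))=1$ matches $\textnormal{dim}(\mathbb{DL}(P)|_{\langle e_1\rangle})=1$.
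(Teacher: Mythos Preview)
Your proof is correct and follows essentially the same approach as the paper: the paper's one-line proof simply invokes $\mathbb{G}_1(P)\cap\mathbb{G}_k(P)=\mathbb{DL}(P)|_{\langle e_1\rangle}$ together with the nested sequences (\ref{sequenceDG}) and (\ref{sequenceBG}), and you have supplied exactly the details that make this work, namely the cycle of inclusions collapsing all three intersections onto $\mathbb{DG}_1(P)=\mathbb{BG}_1(P)$ and the reduction of (\ref{ansatzequation1}) at $\eta=0$ and $\eta=k-1$ to the classical $\mathbb{L}_1$ and $\mathbb{L}_2$ ansatz equations.
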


\begin{proof}
Since $\mathbb{G}_1(P) \cap \mathbb{G}_k(P) = \mathbb{DL}(P)|_{\langle e_1 \rangle}$ the lemma follows from the observations in (\ref{sequenceDG}) and (\ref{sequenceBG}).
\end{proof}

Corollary \ref{cor_spaceL1} has particularly nice consequences for the ansatz spaces $\mathbb{L}_1(P),$ $\mathbb{L}_2(P)$ and $\mathbb{DL}(P)$. In fact, many well-known results on $\mathbb{L}_1(P)$ admit easily accessible proofs considering the form (\ref{L1}) instead of \cite[Thm. 3.5]{MacMMM06} (see \cite{FassS16}).
In the next section we show that the standard basis of $\mathbb{DL}(P)$, i.e. the rectangular matrices $\mathcal{Z}_i$ corresponding to the basis pencils $$\mathcal{B}_i(\lambda) = \big[ \, e_i \otimes I_n \; \mathcal{Z}_i \, \big] \textnormal{Frob}_P(\lambda) \in \mathbb{DL}(P)  \quad i=1, \ldots , k$$ can in fact be immediately determined from a tableau containing the matrix coefficients of $P(\lambda)$ without any computation at all.

\subsection{Application: Computing the Standard Basis of $\mathbb{DL}(P)$}

Consider the double ansatz space  $\mathbb{DL}(P) = \mathbb{L}_1(P) \cap \mathbb{L}_2(P)$ (\ref{DLP}). Any matrix
pencil
$\mathcal{L}(\lambda) \in \mathbb{DL}(P)$ is blocksymmetric \cite[Theorem 3.4]{HigMMT06}. In \cite[Section
3.3]{HigMMT06} it is
discussed how to compute the \enquote{standard basis pencils} in $\mathbb{DL}(P)$ corresponding to the standard
basis $\{e_1,
\ldots, e_k\} \in \mathbb{R}^k.$ Certainly, computing the standard basis for $\mathbb{DL}(P)$, see \cite[Sec.
3.3]{HigMMT06},
for $\mathbb{DL}(P)$ from \cite[Theorem 3.5]{HigMMT06} seems not to be a complicated task.
However, regarding the expression (\ref{L1}) for matrix pencils in $\mathbb{L}_1(P)$, computing a particular
blocksymmetric pencil
$\mathcal{L}(\lambda) \in \mathbb{L}_1(P)$ for some given ansatz vector $v \in \mathbb{R}^k$ breaks down to the
computation of
the corresponding matrix $\mathcal{Z} \in
\mathbb{R}^{kn \times (k-1)n}$.
Thus, computing
$\mathcal{Z}_j$ for $\mathcal{B}_j := [ \, (e_j \otimes I_n) \; \mathcal{Z}_j \, ] \textnormal{Frob}_P(\lambda)
\in
\mathbb{DL}(P)$ seems
even simpler and does only require the computation of one $kn \times (k-1)n$ matrix instead of the set-up of two
$kn \times kn$
matrices. In fact in was shown in \cite{FassS16} that $\mathcal{Z}$ has some blocksymmetric structure, too.

To this end, let $P(\lambda)=\sum_{i=0}^k P_i
\lambda^k$ be a square matrix polynomial of degree $k$.
Using the matrix coefficients of $P(\lambda)$ we define the $\mathcal{P}$-Tableau corresponding to $P(\lambda)$ as
in Figure
\ref{P-tableau}.
\begin{figure}
\begin{center}
\begin{tikzpicture}[scale=1]
 \draw[help lines] (0,0) grid (8,5);
 \draw[line width=0.4mm] (0,0) rectangle (8,5);
\node at (0.5,0.5) {\footnotesize{$P_{k-1}$}};
\node at (1.5,0.5) {\footnotesize{$P_{k-2}$}};
\node at (2.5,0.5) {\footnotesize{$\hdots $}};
\node at (3.5,0.5) {\footnotesize{$P_1$}};
\node at (4.5,0.5) {\footnotesize{$-P_{0}$}};
\node at (0.5,1.5) {\footnotesize{$P_{k}$}};
\node at (1.5,1.5) {\footnotesize{$P_{k-1}$}};
\node at (2.5,1.5) {\footnotesize{$\hdots$}};
\node at (3.5,1.5) {\footnotesize{$P_{2}$}};
\node at (4.5,1.5) {\footnotesize{$-P_1$}};
\node at (5.5,1.5) {\footnotesize{$-P_0$}};
\node at (1.5,2.5) {\footnotesize{$\iddots$}};
\node at (2.5,2.5) {\footnotesize{$\iddots$}};
\node at (3.5,2.5) {\footnotesize{$\vdots$}};
\node at (4.5,2.5) {\footnotesize{$\vdots$}};
\node at (5.5,2.5) {\footnotesize{$\iddots$}};
\node at (6.5,2.5) {\footnotesize{$\iddots$}};
\node at (2.5,3.5) {\footnotesize{$P_k$}};
\node at (3.5,3.5) {\footnotesize{$P_{k-1}$}};
\node at (4.5,3.5) {\footnotesize{$-P_{k-2}$}};
\node at (5.5,3.5) {\footnotesize{$\hdots$}};
\node at (6.5,3.5) {\footnotesize{$-P_1$}};
\node at (7.5,3.5) {\footnotesize{$-P_0$}};
\node at (3.5,4.5) {\footnotesize{$P_k$}};
\node at (4.5,4.5) {\footnotesize{$0$}};
\node at (5.5,4.5) {\footnotesize{$\hdots$}};
\node at (6.5,4.5) {\footnotesize{$\hdots$}};
\node at (7.5,4.5) {\footnotesize{$0$}};
\draw[line width=0.4mm] (4,0) -- (4,5);
\node at (4,5.5) {$\mathcal{P}$-Tableau};
\end{tikzpicture}
\end{center}
\caption{$\mathcal{P}$-tableau corresponding to $P(\lambda) = \sum_{i=0}^k P_i \lambda^i$.}
\label{P-tableau}
\end{figure}
Now the matrices $\mathcal{Z}_i$ that correspond to a blocksymmetric matrix pencil $\mathcal{L}(\lambda) =
\mathcal{B}_j(\lambda)   \in \mathbb{L}_1(P)$
having the form (\ref{L1}) with ansatz vector $e_j$ can easily be determined by the tableau.
Therefore, regard the tableau as a $k \times 2(k-1)$ matrix and denote the left half by $\mathcal{J}_P$ and the
right half by
$\mathcal{H}_P$.

\begin{lemma} \label{lem_standardbasis}
 Let $P(\lambda) = \sum_{i=0}^k P_i \lambda^k$ be a square matrix polynomial of degree $k$ and
$\mathcal{L}(\lambda) \in
\mathbb{L}_1(P)$ with ansatz vector $v=e_i$. Then $\mathcal{L}(\lambda) = [\, (e_i \otimes I_n) \; \mathcal{Z}_i \,
]
\textnormal{Frob}_P(\lambda) \in \mathbb{L}_2(P)$ if and only if
\begin{equation}
 \mathcal{Z}_i =
\left\{
\begin{array}{ll}
\mathcal{H}_P & i = 1\\
\mathcal{J}_P(1:i,k-i:k-1) \oplus \mathcal{H}(i+1:k,1:i+1) & 1 < i < k\\
\mathcal{J}_P & i = k
\end{array}
\right.
 \label{formula_blocksymm}
\end{equation}
\end{lemma}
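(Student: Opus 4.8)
The plan is to strip the statement down to a block-symmetry condition and then to read the answer off an explicitly multiplied-out pencil. Since $\mathcal{L}(\lambda)=[\,(e_i\otimes I_n)\;\mathcal{Z}_i\,]\,\textnormal{Frob}_P(\lambda)$ already lies in $\mathbb{L}_1(P)$ with ansatz vector $e_i$ for every choice of $\mathcal{Z}_i$ (Corollary~\ref{cor_spaceL1}), requiring in addition $\mathcal{L}(\lambda)\in\mathbb{L}_2(P)$ is the same as requiring $\mathcal{L}(\lambda)\in\mathbb{DL}(P)=\mathbb{L}_1(P)\cap\mathbb{L}_2(P)$, and for a pencil already in $\mathbb{L}_1(P)$ this is equivalent to block-symmetry: block-transposing the ansatz equation $\mathcal{L}(\lambda)(\Lambda_{k-1}\otimes I_n)=e_i\otimes P(\lambda)$ is legitimate because the blocks of $\Lambda_{k-1}\otimes I_n$ are scalar multiples of $I_n$, and it turns that identity into $(\Lambda_{k-1}^T\otimes I_n)\mathcal{L}(\lambda)^{\mathcal{B}}=e_i^T\otimes P(\lambda)$, i.e.\ $\mathcal{L}(\lambda)^{\mathcal{B}}\in\mathbb{L}_2(P)$; together with \cite[Thm.~3.4]{HigMMT06} (every pencil in $\mathbb{DL}(P)$ is block-symmetric) this yields $\mathcal{L}(\lambda)\in\mathbb{L}_2(P)\Longleftrightarrow\mathcal{L}(\lambda)=\mathcal{L}(\lambda)^{\mathcal{B}}$. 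So the lemma reduces to: $\mathcal{L}(\lambda)=\mathcal{L}(\lambda)^{\mathcal{B}}$ if and only if $\mathcal{Z}_i$ is the matrix built from the $\mathcal{P}$-tableau in \eqref{formula_blocksymm}.

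The next step is to multiply out. Writing $\mathcal{Z}_i=(Z_{pq})$ as a $k\times(k-1)$ array of $n\times n$ blocks and using the two-term form of $\textnormal{Frob}_P(\lambda)$, a short computation gives $\mathcal{L}(\lambda)=X\lambda+Y$ with $p$-th block row of $X$ equal to $(\,\delta_{pi}P_k,\ Z_{p,1},\dots,Z_{p,k-1}\,)$ and $p$-th block row of $Y$ equal to $(\,\delta_{pi}P_{k-1}-Z_{p,1},\dots,\delta_{pi}P_{1}-Z_{p,k-1},\ \delta_{pi}P_0\,)$. I would then impose $X=X^{\mathcal{B}}$ and $Y=Y^{\mathcal{B}}$ block-entrywise. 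Symmetry of the first block column of $X$ forces the first block row of $\mathcal{Z}_i$ to carry $P_k$ in block column $i-1$ and $0$ elsewhere; symmetry of the last block column of $Y$ forces $Z_{k,p}=\delta_{ki}P_{k-p}-\delta_{pi}P_0$, pinning down the last block row; and the remaining equations $Z_{p+1,q}=Z_{q+1,p}$ (from $X$) together with $Z_{q,p}-Z_{p,q}=\delta_{qi}P_{k-p}-\delta_{pi}P_{k-q}$ (from $Y$) combine into a recursion along anti-diagonals which, seeded by the first and last block rows, determines every remaining block of $\mathcal{Z}_i$ uniquely. Tracking this recursion one sees that the surviving blocks assemble into two Hankel-structured corners --- a block-Hankel pattern in the $P_j$ in one corner and a block-Hankel pattern in the $-P_j$ in the complementary corner, with all other blocks zero --- which is precisely the description of $\mathcal{Z}_i$ produced by the $\mathcal{P}$-tableau in \eqref{formula_blocksymm}; at the extremes the answer collapses to $\mathcal{Z}_1=\mathcal{H}_P$ and $\mathcal{Z}_k=\mathcal{J}_P$.

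The step I expect to be the main obstacle is this last identification: one must align the blocks produced by the recursion with the precise row/column ranges that carve $\mathcal{J}_P$ and $\mathcal{H}_P$ out of the tableau in \eqref{formula_blocksymm}, check that the constant-anti-diagonal pattern built from the coefficients of $P(\lambda)$ is compatible with the relation $Z_{p+1,q}=Z_{q+1,p}$ across the two corners, and keep the cases $i=1$, $1<i<k$ and $i=k$ straight at the boundaries (where one of the corners degenerates). A convenient way to shorten the bookkeeping is to argue by uniqueness rather than by solving: $\dim\mathbb{DL}(P)=k$ and the map sending a pencil in $\mathbb{DL}(P)$ to its ansatz vector is a bijection onto $\mathbb{R}^k$ \cite{MacMMM06}, so it suffices to \emph{verify} directly that the pencil built from the tableau $\mathcal{Z}_i$ of \eqref{formula_blocksymm} is block-symmetric --- an inspection of the two Hankel corners and the vanishing off-corner blocks --- whence it is automatically the unique element $\mathcal{B}_i(\lambda)\in\mathbb{DL}(P)$ with ansatz vector $e_i$ and the ``only if'' direction follows as well. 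In effect this amounts to reading off the compact form \eqref{L1} the standard $\mathbb{DL}(P)$-basis of \cite[Sec.~3.3]{HigMMT06} (see also \cite{FassS16}).
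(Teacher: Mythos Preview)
Your proposal is correct. The paper's own proof is much terser than yours: it simply multiplies out $\mathcal{L}(\lambda)=[\,(v\otimes I_n)\;\mathcal{Z}\,]\textnormal{Frob}_P(\lambda)$ to the form $\mathcal{L}_1\lambda+\mathcal{L}_0$ with $\mathcal{L}_1=[\,(v\otimes P_k)\;\mathcal{Z}\,]$, and then compares this directly with the explicit standard-basis matrices $X_m$ from \cite[(3.8a), Thm.~3.5]{HigMMT06}, deferring the entire identification to that reference.

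Your route is more self-contained: you first reduce membership in $\mathbb{L}_2(P)$ to block-symmetry (using the block-transpose of the $\mathbb{L}_1$-ansatz equation together with \cite[Thm.~3.4]{HigMMT06}), then write out the block entries of $X$ and $Y$ explicitly and extract the constraints $Z_{p+1,q}=Z_{q+1,p}$ and $Z_{q,p}-Z_{p,q}=\delta_{qi}P_{k-p}-\delta_{pi}P_{k-q}$, and finally close via the uniqueness argument $\dim\mathbb{DL}(P)=k$. What this buys you is an argument that does not require the reader to look up the formulas in \cite{HigMMT06}; what the paper's version buys is brevity, since the tableau in \eqref{formula_blocksymm} is precisely a repackaging of those known formulas. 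Your uniqueness shortcut is the cleanest way to finish: since $\mathcal{Z}$ is read off the last $k-1$ block columns of $X$, the map $\mathcal{L}(\lambda)\mapsto\mathcal{Z}$ is injective, so verifying block-symmetry for the tableau $\mathcal{Z}_i$ indeed settles both directions at once.
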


\begin{proof}
First observe that any matrix pencil $\mathcal{L}(\lambda) \in \mathbb{L}_1(P)$ may be expressed as
\begin{align} \mathcal{L}(\lambda) &= \big[ \, (v \otimes I_n) \; \mathcal{Z} \, \big] \textnormal{Frob}_P(\lambda)
\notag \\ &=
v \otimes \Sigma_{0,P}(\lambda) + \mathcal{Z}L_{k-1}(\lambda) \notag \\
 &= \big[ \, (v \otimes P_k) \; \mathcal{Z} \, \big] \lambda + \big( v \otimes \Sigma_{0,P}(0) + \mathcal{Z}
L_{k-1}(0) \big)
\label{pencil_L1L0}
\end{align}
Now notice that (\ref{pencil_L1L0}) expresses $\mathcal{L}(\lambda)$ in the form $\mathcal{L}(\lambda) =
\mathcal{L}_1 \lambda +
\mathcal{L}_0$ with two $kn \times kn$ matrices $\mathcal{L}_1$ and $\mathcal{L}_0$. This form was mainly
considered in
\cite{HigMMT06}.
Comparing $X_m$ from \cite[(3.8a)]{HigMMT06} with $\mathcal{Z}_m$ as defined in Lemma \ref{lem_standardbasis} and
considering \cite[Thm. 3.5]{HigMMT06} shows the statement.
\end{proof}

To illustrate Lemma \ref{lem_standardbasis} consider the following examples. Deviating from our notation, the
polynomial
coefficients in the example below are denoted $A,B, C, \ldots$ to highlight the similarity to \cite[Table
1/2]{MacMMM06}
and \cite[Table 3.1/3.2]{HigMMT06}.

\begin{example}
Let $P(\lambda)= A \lambda^2 + B \lambda + C$ be an $n \times n$ matrix polynomial of degree
$\textnormal{deg}(P(\lambda))=2$.
Then the matrix $\mathcal{Z}$ in (\ref{L1}) has dimension $2n \times n$. Therefore, the
$\mathcal{P}$-tableau has
dimension $2n \times 2n$ and is easily computed as
\begin{center}
\begin{tikzpicture}[scale=0.8]
\draw[help lines] (0,0) grid (2,2);
\node at (0.5,0.5) {\footnotesize{$B$}};
\node at (1.5,0.5) {\footnotesize{$-C$}};
\node at (0.5,1.5) {\footnotesize{$A$}};
\node at (1.5,1.5) {\footnotesize{$0$}};
\draw[line width=0.4mm] (1,0) -- (1,2);
\end{tikzpicture}
\end{center}
and we have $\mathcal{Z}_1 = \mathcal{H}_P$ and $\mathcal{Z}_2 = \mathcal{J}_P$.
Now let $P(\lambda) = A \lambda^3 + B \lambda ^2 + C \lambda + D$ be of degree $\textnormal{deg}(P(\lambda))=3$.
Then the matrix
$\mathcal{Z}$ in (\ref{L1}) has dimension $3n \times 2n$ and the $\mathcal{P}$-tableau
dimension $3n \times 4n$. It is given by
 \begin{center}
 \begin{tikzpicture}
  \draw[help lines] (0,0) grid (4,3);
  \node at (0.5, 0.5) {\footnotesize{$B$}};
  \node at (1.5, 0.5) {\footnotesize{$C$}};
  \node at (2.5, 0.5) {\footnotesize{$-D$}};
  \node at (3.5, 0.5) {\footnotesize{$0$}};
  \node at (0.5, 1.5) {\footnotesize{$A$}};
  \node at (1.5, 1.5) {\footnotesize{$B$}};
  \node at (2.5, 1.5) {\footnotesize{$-C$}};
  \node at (3.5, 1.5) {\footnotesize{$-D$}};
  \node at (0.5, 2.5) {\footnotesize{$0$}};
  \node at (1.5, 2.5) {\footnotesize{$A$}};
  \node at (2.5, 2.5) {\footnotesize{$0$}};
  \node at (3.5, 2.5) {\footnotesize{$0$}};
  \draw[line width=0.4mm] (2,0) -- (2,3);
 \end{tikzpicture}
 \end{center}
 The three structures according to formula (\ref{formula_blocksymm}) are
  \begin{center}
  \begin{tikzpicture}[scale=0.8]
  \draw[help lines] (0,0) grid (4,3);
  \node at (0.5, 0.5) {\footnotesize{$B$}};
  \node at (1.5, 0.5) {\footnotesize{$C$}};
  \node at (2.5, 0.5) {\footnotesize{$-D$}};
  \node at (3.5, 0.5) {\footnotesize{$0$}};
  \node at (0.5, 1.5) {\footnotesize{$A$}};
  \node at (1.5, 1.5) {\footnotesize{$B$}};
  \node at (2.5, 1.5) {\footnotesize{$-C$}};
  \node at (3.5, 1.5) {\footnotesize{$-D$}};
  \node at (0.5, 2.5) {\footnotesize{$0$}};
  \node at (1.5, 2.5) {\footnotesize{$A$}};
  \node at (2.5, 2.5) {\footnotesize{$0$}};
  \node at (3.5, 2.5) {\footnotesize{$0$}};
  \draw[color=black, line width=0.4mm] (2,0) rectangle (4,3);
  \node[fill=white,scale=0.5] at (2,3) {\Huge{$\oplus$}};
 \end{tikzpicture} \quad
  \begin{tikzpicture}[scale=0.8]
  \draw[help lines] (0,0) grid (4,3);
  \node at (0.5, 0.5) {\footnotesize{$B$}};
  \node at (1.5, 0.5) {\footnotesize{$C$}};
  \node at (2.5, 0.5) {\footnotesize{$-D$}};
  \node at (3.5, 0.5) {\footnotesize{$0$}};
  \node at (0.5, 1.5) {\footnotesize{$A$}};
  \node at (1.5, 1.5) {\footnotesize{$B$}};
  \node at (2.5, 1.5) {\footnotesize{$-C$}};
  \node at (3.5, 1.5) {\footnotesize{$-D$}};
  \node at (0.5, 2.5) {\footnotesize{$0$}};
  \node at (1.5, 2.5) {\footnotesize{$A$}};
  \node at (2.5, 2.5) {\footnotesize{$0$}};
  \node at (3.5, 2.5) {\footnotesize{$0$}};
  \draw[color=black, line width=0.4mm] (2,0) -- ++(1,0) -- ++(0,1) -- ++(-1,0) -- ++(0,2) -- ++(-1,0) -- ++(0,-2)
-- ++(1,0) --
++(0,-1);
\node[fill=white,scale=0.5] at (2,1) {\Huge{$\oplus$}};
 \end{tikzpicture} \quad
  \begin{tikzpicture}[scale=0.8]

  \draw[help lines] (0,0) grid (4,3);
  \node at (0.5, 0.5) {\footnotesize{$B$}};
  \node at (1.5, 0.5) {\footnotesize{$C$}};
  \node at (2.5, 0.5) {\footnotesize{$-D$}};
  \node at (3.5, 0.5) {\footnotesize{$0$}};
  \node at (0.5, 1.5) {\footnotesize{$A$}};
  \node at (1.5, 1.5) {\footnotesize{$B$}};
  \node at (2.5, 1.5) {\footnotesize{$-C$}};
  \node at (3.5, 1.5) {\footnotesize{$-D$}};
  \node at (0.5, 2.5) {\footnotesize{$0$}};
  \node at (1.5, 2.5) {\footnotesize{$A$}};
  \node at (2.5, 2.5) {\footnotesize{$0$}};
  \node at (3.5, 2.5) {\footnotesize{$0$}};
  \draw[color=black, line width=0.4mm] (0,0) rectangle (2,3);
  \node[fill=white,scale=0.5] at (2,0) {\Huge{$\oplus$}};
 \end{tikzpicture}

\end{center}
for $e_1$, $e_2$ and $e_3 \in \mathbb{R}^{3}$ respectively. Therefore, any matrix pencil $\mathcal{L}(\lambda)$ in
$\mathbb{DL}(P)$ with
ansatz vector $v \in \mathbb{R}^3$ can be expressed as $ \mathcal{L}(\lambda) = [ \, (v \otimes I_n) \; \mathcal{Z}
\, ]
\textnormal{Frob}_P(\lambda)$ with
$$ \mathcal{Z} = v_1 \begin{bmatrix} 0 & 0 \\ -C & -D \\ -D & 0 \end{bmatrix}
+ v_2 \begin{bmatrix} A & 0 \\ B & 0 \\ 0 & -D \end{bmatrix}
+ v_3 \begin{bmatrix} 0 & A \\ A & B \\ B & C \end{bmatrix}. $$
\end{example}

\section{Conclusions}
\label{sec:conclusions}
In this paper, we introduced a family of equations for matrix pencils that turn out to be a new source of
linearizations for
square and rectangular matrix polynomials $P(\lambda)$. We showed that these equations define vector spaces
$\mathbb{G}_{\eta +
1}(P)$ of matrix pencils in which almost every pencil is a strong linearization regardless whether $P(\lambda)$
is regular or singular. These spaces were named \enquote{block Kronecker ansatz spaces} since they contain the entire
family of
block Kronecker pencils as introduced in \cite{DopLPVD16} and share important properties with the \enquote{ansatz spaces}
from
\cite{MacMMM06}. We showed that the intersection of two block Kronecker ansatz spaces $\mathbb{DG}_{\eta + 1}(P) =
\mathbb{G}_{\eta
+ 1}(P) \cap \mathbb{G}_{k - \eta}(P)$ is never empty and contains a proper subspace $\mathbb{BG}_{\eta + 1}(P)$ of
block-symmetric matrix pencils. Still almost every pencil is a strong linearization in either $\mathbb{DG}_{\eta +
1}(P)$ and
$\mathbb{BG}_{\eta + 1}(P)$ given the case that zero is not an eigenvalue of $P(\lambda)$. Moreover, we presented a
simple
approach to the construction of matrix pencils in $\mathbb{DG}_{\eta + 1}(P)$ and $\mathbb{BG}_{\eta + 1}(P)$ and
showed that these spaces form nested sequences of vector spaces for increasing choices of $\eta$.

Block Kronecker ansatz equations may be defined for other
polynomial bases as well (see, e.g., \cite{LawP16} for a clever generalization of block Kronecker pencils for the
Chebyshevbasis). Moreover, as we pointed out in Remark \ref{rem1}, the conceptual ideas presented here may even be formulated in the
abstract
framework of dual bases (i.e. \enquote{strong block minimal bases pencils}, see \cite{DopLPVD16} for more
information). A deeper
study in this direction is, at least to the authors
opinion, likely to give attractive novel results on how Fiedler pencils, block Kronecker pencils and ansatz spaces
interact.

\section{Acknowledgement}

Our sincere thanks go to both of the reviewers. Their helpful remarks and comments helped to improve the paper.
We gratefully appreciate this.

\end{document}